\definecolor{mycolor}{rgb}{0.122, 0.435, 0.698}
\newmdenv[innerlinewidth=0.5pt, roundcorner=4pt,linecolor=mycolor,innerleftmargin=6pt,
innerrightmargin=6pt,innertopmargin=6pt,innerbottommargin=6pt]{mybox}
\theoremstyle{plain}
\DeclareMathOperator{\e}{e}
\newtheorem{thm}{Theorem}[section]
\newtheorem{prop}[thm]{Proposition}
\newtheorem{lem}[thm]{Lemma}
\newtheorem{rem}[thm]{Remark}
\newtheorem{cor}[thm]{Corollary}
\newcommand{\R}{\mathbb{R}}
\title{Interpolated Drift Implicit Euler MLMC Method for Barrier Option Pricing and application to  CIR and CEV Models}
\author{\textsc{Mouna Ben Derouich}}
\thanks{}
\address{Mouna Ben Derouich, Université Sorbonne Paris Nord, LAGA, CNRS, UMR 7539,  F-93430, Villetaneuse, France}
\email{benderouiche@math.univ-paris13.fr}
\author{\textsc{Ahmed Kebaier}}
\thanks{}
\address{Ahmed Kebaier, Laboratoire de Math\'ematiques et Mod\'elisation d'Evry, CNRS, UMR 8071, Université d'Evry, Universit\'e Paris-Saclay, 91037, Evry, France  }
\email{ahmed.kebaier@univ-evry.fr}
\date{\today}
\subjclass[2010]{60H10, 60H35, 65C05, 	65C30, 33C15, 41A60}
\keywords{Multilevel Monte Carlo, Stochastic differential equations with singular diffusion coefficients, drift implicit Euler scheme, Lamperti transformation, Confluent hypergeometric functions, Asymptotic approximations, Computational finance}
\begin{document}
\maketitle
\begin{abstract}
Recently, Giles et al. \cite{GilDebRos} proved that the efficiency of  the Multilevel Monte Carlo (MLMC)  method for evaluating Down-and-Out barrier options  for a diffusion process $(X_t)_{t\in[0,T]}$ with globally Lipschitz coefficients, can be  improved by combining a  Brownian bridge technique and a conditional Monte Carlo method provided that the running minimum $\inf_{t\in[0,T]}X_t$ has a bounded density in the vicinity of the barrier. 
In the present work, thanks to the Lamperti transformation technique and using a Brownian interpolation of the drift implicit Euler scheme of Alfonsi \cite{alfonsi2013strong}, we show that  the efficiency of the MLMC  can be also improved for the evaluation of barrier options  for models with  non-Lipschitz diffusion coefficients under certain moment constraints. We study two example models: the Cox-Ingersoll-Ross (CIR)  and the Constant  of Elasticity of Variance  (CEV) processes for which we show that the conditions of our  theoretical framework are satisfied  under certain restrictions on the models parameters. In particular, we develop semi-explicit formulas for the densities of the running minimum and running maximum of both CIR and CEV processes which are of independent interest. Finally, numerical tests are processed to illustrate our results.
 \end{abstract}

\section{Introduction}
Barrier options are one of the most widely traded exotic options in the financial markets. Pricing and hedging such path-dependent option can quickly become very challenging especially when we need to achieve a good precision for the approximation. Evaluating barrier options by a classic Monte Carlo method introduces a systematic bias when approximating the continuous running maximum (resp. minimum) in the crossing-barrier indicator by  a discrete running maximum (resp. minimum). To overcome this difficulty, several numerical strategies exist in the literature among them the popular Brownian bridge technique  introduced in \cite{Baldi}  well known for its efficiency and ease of use (see also \cite{GOBET} for  related refinements).   The Brownian bridge technic uses an analytic expression for the probability of hitting the barrier between two known values in a simulated  path of the underlying asset. More recently, a combination of the Multilevel Monte Carlo (MLMC) method with the Brownian bridge technique has been developed in \cite{GilDebRos} for pricing barrier options. The Multilevel Monte Carlo method introduced in Giles \cite{giles2008} as an extension of the two-level Monte Carlo method  of \cite{Keb2005}, significantly reduces the time complexity of the classical Monte Carlo method. More precisely, for a given precision $\varepsilon>0$ and a Lipschitz payoff function, if  the underlying asset $(X_t)_{t\in[0,T] }$ is approximated using a discretization scheme $(\bar X_t)_{{t\in[0,T] }}$  with time step $h>0$ satisfying $\mathbb E|X_t-\bar X_t|^2=O(h^{\beta})$ and $|\mathbb E[X_t-\bar X_t]|=O(h^{\alpha})$ with $\alpha\ge \frac12$, then the time complexity of the MLMC methods is:  $O({\varepsilon}^{-2})$ when $\beta>1$,  $O({\varepsilon}^{-2}(\log \varepsilon)^2)$ when $\beta=1$ and  $O({\varepsilon}^{-2-\frac{1-\beta}{\alpha}})$ when $\beta\in(0,1)$. However, for the same precision $\varepsilon>0$  the optimal time complexity of a classic Monte Carlo method is $O({\varepsilon}^{-3})$. As the payoff function of a barrier option is not Lipschitz, Giles et al. \cite{GilDebRos} take advantage of the Brownian bridge to run the MLMC method for pricing such options, since this technique substitutes the barrier-crossing indicators by the probabilities that the approximation scheme $(\bar X_t)_{t\in[0,T] }$ hits the barrier between each two consecutive discretization times $t_i$ and $t_{i+1}$ and which are represented  as smooth functions of the realized points $\bar X_{t_i}$ and $\bar X_{t_{i+1}}$.  More precisely,  Giles et al. \cite{GilDebRos} consider an underlying asset  solution to a one-dimensional  stochastic differential equation (SDE) with globally Lipschitz smooth coefficients that is approximated by a high order strong approximation scheme namely the Milstein scheme  
$(\bar X^{\text{Milstein}}_t)_{{t\in[0,T] }}$  that satisfies $\mathbb E|X_t-\bar X^{\text{Milstein}}_t|^2=O(h^{2})$. For this case, they prove that the MLMC method reaches its optimal time complexity $O({\varepsilon}^{-2})$ for pricing a Down-and-Out barrier option \footnote{A Down-and-Out barrier Call (resp.~Put) is the option to buy (resp.~sell), at maturity $T$,  the underlying with a fixed strike if the underlying value never falls below the barrier before time $T$.} provided that $\inf_{t\in[0,T]}X_t$ has a bounded density in the neighborhood of the barrier.  This latter condition cannot be  easily checked even  when the SDE coefficients are Lipschitz except for very specific cases.  

 In the current paper, we are interested in studying the MLMC method for pricing barrier options  when the underlying asset is solution to a SDE with a non-Lipschitz diffusion coefficient  such as the popular Cox-Ingersoll-Ross (CIR) and the Constant  of Elasticity of Variance (CEV) processes. Only few works exist in the literature that studied the problem of pricing path-dependent options under such singular models (see e.g.~\cite{Davydov}).  To analyze the performance of the MLMC method, we consider in Section \ref{sec:GF} a general framework of  models with  non-Lipschitz  diffusion coefficients and use a Lamperti transformation to focus our study  on a new process $(Y_t)_{t\in[0,T]}$ with an additive noise diffusion but in counterpart with a possibly singular drift coefficient $L$.   Then, we introduce a Brownian interpolation scheme  $(\bar Y_t)_{t\in[0,T]}$ associated  to the drift implicit Euler scheme of Alfonsi \cite{alfonsi2013strong} for which we prove a strong convergence result with order one (see Theorem \ref{hypc}). In Section \ref{sec:MLMC}, we use the Brownian bridge technic that substitutes the crossing-indicators with smooth functions of realized points in the path of the scheme $(\bar Y_t)_{t\in[0,T]}$ to build  the corresponding  MLMC estimator. Next, under suitable assumptions on the drift $L$, we prove that the obtained MLMC method   for pricing Down-and-Out  (resp.~Up-and-Out) reaches its optimal time complexity  $O({\varepsilon}^{-2})$  provided that  $\inf_{t\in[0,T]}Y_t$  (resp. $\sup_{t\in[0,T]}Y_t$) has a bounded density in the neighborhood of the barrier (see Theorem \ref{MLMC:var} and Remark \ref{rem:param}).   In Sections  \ref{sec:CIR} and \ref{sec:CEV}, we provide two examples of processes satisfying  Theorem \ref{MLMC:var} conditions, namely  the CIR and the CEV  models.  It turns out that under additional constraints on the parameters of these two models ensuring the existence of finite negative moments up to a certain order, the MLMC method behaves exactly like a classical unbiased Monte Carlo estimator despite the use of approximation schemes.
To show that the conditions of our  theoretical framework are satisfied for these two models, we develop using fine asymptotic properties of confluent hypergeometric type functions, semi-explicit formulas for the densities of the running minimum and running maximum of both CIR and CEV processes which are  of independent interest (see Theorems \ref{thm:CIR-max}, \ref{thm:CIR-min}, \ref{thm:CEV-max} and \ref{thm:CEV-min}).  Finally, we proceed to several numerical tests illustrating our results.

\section{General framework}\label{sec:GF}
Let us consider  a process $(X_t)_{t\in[0,T]}$ solution to 
\begin{equation}
\label{eq1}
dX_t=b(X_t)dt+ \sigma(X_t) dW_t, 
\quad X_0=x,
\end{equation}
where $(W_t)_{t\ge 0}$ is a standard Brownian motion, $b: \R \rightarrow \R$ and  $\sigma:\R \rightarrow \R^*_+$ are locally Lipschitz-functions such that  $\frac 1{\sigma}$  is locally integrable. For $\phi(y)=\int_{y_0}^y\frac1{\sigma(x)}dx$, if $\sigma\in \mathcal C^1$ then by the Lamperti transform  $Y_t=\phi(X_t)$ solves the stochastic differential equation 
\begin{equation*}
 dY_t=L(X_t)dt+ dW_t, \quad
Y_0=\phi(x),
\end{equation*} 
with $L(x)=\left(\frac{b}{\sigma}- \frac{\sigma '}2\right)(\phi^{-1}(x))$. In this work, we are interested in approximating barrier option prices such as the Down-and-Out (D-O) and  the Up-and-Out (U-O) barrier options 
$$\pi_{{\mathcal B}_{D}}=\mathbb{E}\Big[f(X_T)\mathbbm{1}_{\{ \inf_{t\in[0,T]}X_t>{{\mathcal B}_{D}}\}}\Big] \mbox { and } \pi_{{\mathcal B}_{U}}=\mathbb{E}\Big[f(X_T)\mathbbm{1}_{\{\sup_{t\in[0,T]}X_t<{\mathcal B}_{U}\}}\Big].$$
The other types of barrier options such as the Down-and-In and the Up-and-In can be easily deduced from the  price of the vanilla option $\mathbb E[f(X_T)]$. As the function $\phi$ is monotonic,  by the  Lamperti transformation we reduce ourselves to a pricing problem with the process $(Y_t)_{t\in[0,T]}$. More precisely, we get $\pi_{{\mathcal B}_{D}}=\pi_{\mathcal D}$ and 
 $\pi_{{\mathcal B}_{U}}=\pi_{\mathcal U}$ where 
$$\pi_{{\mathcal D}}=\mathbb{E}\Big[g(Y_T)\mathbbm{1}_{\{ \inf_{t\in[0,T]}Y_t>{{\mathcal D}}\}}\Big], \;\,\pi_{{\mathcal U}}=\mathbb{E}\Big[g(Y_T)\mathbbm{1}_{\{\sup_{t\in[0,T]}Y_t<{\mathcal U}\}}\Big],$$
$g(x)=f\circ \phi^{-1}(x)$, $\mathcal D=\phi({\mathcal B}_{D})$ and $\mathcal U=\phi({\mathcal B}_{U})$. In the sequel,  we consider the general setting given in \cite{alfonsi2013strong} and let $(Y_t)_{t\ge0}$ denote  the SDE defined on $I=(0, +\infty)$ solution to
\begin{align}
\label{lam}
    dY_t&=L(Y_t)dt+ \gamma dW_t, \quad t\ge 0,\;Y_0=y\in I,\mbox{ with } \gamma\in \mathbb R^*,
\end{align}
where the drift coefficient $L$ is supposed to satisfy the following monotonicity assumption:
\begin{equation}
\label{ine1}
L: I\longrightarrow \R~~is~~C^2,~~\mbox{such that} ~~\exists~\kappa>0,~~ \forall y,y'\in I, y\leq y', L(y')-L(y)\leq \kappa(y'-y). 
\end{equation}
In addition, for an arbitrary point $d\in I$, we assume that 
\begin{equation}
\label{ine2}
v(x)=\displaystyle \int_d^x \int_d^y \exp\Big(-\frac{2}{\gamma^2}\displaystyle \int_z^y L(\xi) d\xi \Big) dz dy\;\text{ satisfies } \lim\limits_{x \to 0^+} v(x) =+\infty.\tag{H1}
\end{equation}
On the one hand, by  the Feller's test (see  e.g. \cite{karatzas1991brownian}), \eqref{ine1} and \eqref{ine2} ensure that the SDE \eqref{lam} admits a unique strong solution $(Y_t)_{t\ge 0}$ on $I$ that never reaches  the boundaries $0$ and  $+\infty$. 
On the other hand,  under these two conditions the below drift implicit continuous scheme introduced in \cite{alfonsi2013strong},
\begin{align}\label{sch:DI}
\widehat{Y}^n_{t}&=\widehat{Y}^n_{t_{i}}+L(\widehat{Y}^n_{t})(t-t_i)+\gamma (W_{t}-W_{t_i}),  \mbox{ with }t\in(t_i,t_{i+1}], t_i=\frac{i T}{n},\;0 \leq i \leq n-1,\\
\widehat{Y}^n_0&=y.\notag
\end{align}
 is well defined and for all $t\in[0,T]$, $Y^n_t\in I$.   Besides, if in addition we assume that for $p\ge 1$, we have
 \begin{align}\label{ine3}
    \mathbb{E}\Big[\Big(\displaystyle \int_0^T\vert L'(Y_u)L(Y_u)+\frac{\gamma^2}{2}L''(Y_u)\vert du \Big)^p\Big] < \infty~~and~~\mathbb{E}\Big[\Big(\displaystyle \int_0^T (L'(Y_u))^2 du \Big)^{\frac{p}{2}}\Big]< \infty,\tag{H2}
\end{align}
then by \cite{alfonsi2013strong},  there exists a positive constant $K_p$ such that 
$$
\mathbb E^{\frac 1p}\left[\sup_{t\in[0,T]}|\widehat{Y}^n_t-Y_t|^p\right]\le K_p\frac T n.
$$
For our purpose, we rather focus on a slightly different interpolated version of the drift implicit scheme. More precisely, we first introduce the discrete version of the drift implicit scheme given by 
\begin{equation}
 \label{scheme}
\left \{
\begin{array}{rcl}
\overline{Y}^n_{t_{i+1}}&=&\overline{Y}^n_{t_{i}}+L(\overline{Y}^n_{t_{i+1}})\frac Tn+\gamma (W_{t_{i+1}}-W_{t_i}),  \mbox{ with }\; t_i=\frac{i T}{n},\;0 \leq i \leq n-1,\\\\
\overline{Y}^n_0&=&y.
\end{array}
\right.\\
\end{equation}
%
%
%
and introduce the following interpolated drift implicit scheme
\begin{align}
\label{interp}
    \overline{Y}^n_t=\overline{Y}^n_{t_{i}}+L(\overline{Y}^n_{t_{i+1}})(t-t_{i})+\gamma(W_t-W_{t_{i}}),~~~~ \text{for}\, t \in [t_i,t_{i+1}[, \;\;0 \leq i \leq n-1.
\end{align}
The main advantages of this Brownian interpolation is that it preserves the rate of strong convergence of the original drift implicit scheme \eqref{sch:DI} and allows at the same time the use of the Brownian bridge technique for pricing Barrier options (see Section \ref{subsec:BB} below).
In what follows, we strengthen our assumption on the drift coefficient $L$ as follows:
\begin{multline}
\label{ine4}
L : I \to \mathbb R \mbox{ is }  \mathcal C^2 \mbox{ such that: }   L \mbox{ is decreasing on } (0,A) \mbox{ for }A>0, \\ \mbox{ and } L'  \text{ the first derivative of } L \mbox{ satisfies }  \exists  L'_A>0 \mbox{ s.t. }  \forall y\in(A,\infty), \;|L'(y)|\le L'_A.\tag{H3}
\end{multline}
\begin{thm}
\label{hypc}
Assume that conditions \eqref{ine3} and \eqref{ine4} hold true for a given $p>1$ and with $L'_A<\frac n{2T}$.  Then, there exists a constant $K_p>0$ such that
$$\mathbb{E}^{\frac 1p}\Big[\sup_{t \in [0,T]} \vert \overline{Y}^n_t - Y_t \vert^p \Big] \leq K_p \frac{T}{n}.$$
\end{thm}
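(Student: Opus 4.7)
The plan is to use Alfonsi's continuous drift implicit scheme $\widehat Y^n$ from \eqref{sch:DI} as a stepping stone: its $O(T/n)$ strong rate is recalled just above, so by the triangle inequality it suffices to prove $\mathbb{E}^{1/p}[\sup_{t\in[0,T]}|\overline Y^n_t-\widehat Y^n_t|^p]\le K_p T/n$. A short induction on $i$ gives $\overline Y^n_{t_i}=\widehat Y^n_{t_i}$ for every $0\le i\le n$: under \eqref{ine4} with $L'_A<n/(2T)$, the map $y\mapsto y-(T/n)L(y)$ has derivative at least $1$ on $(0,A]$ (since $L'\le 0$ there) and at least $1-L'_AT/n>1/2$ on $(A,\infty)$, hence is a bijection of $I$, and both schemes satisfy the same implicit recursion with the same initial datum and shared Brownian increments.

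For $t\in[t_i,t_{i+1}]$, subtracting \eqref{sch:DI} from \eqref{interp} with $\overline Y^n_{t_i}=\widehat Y^n_{t_i}$ and $\overline Y^n_{t_{i+1}}=\widehat Y^n_{t_{i+1}}$ yields the identity
\[
\overline Y^n_t-\widehat Y^n_t=(t-t_i)\bigl[L(\widehat Y^n_{t_{i+1}})-L(\widehat Y^n_t)\bigr].
\]
Because \eqref{ine4} does not provide a global Lipschitz bound on $L$ (its derivative can blow up at $0$), I avoid estimating this $L$-increment directly: I split $|\overline Y^n_t-\widehat Y^n_t|\le(t-t_i)\bigl(|L(\widehat Y^n_{t_{i+1}})|+|L(\widehat Y^n_t)|\bigr)$ and read off from \eqref{sch:DI} the implicit identities
\[
(t-t_i)L(\widehat Y^n_t)=\widehat Y^n_t-\widehat Y^n_{t_i}-\gamma(W_t-W_{t_i}),\qquad \tfrac{T}{n}L(\widehat Y^n_{t_{i+1}})=\widehat Y^n_{t_{i+1}}-\widehat Y^n_{t_i}-\gamma\Delta W_{t_{i+1}},
\]
turning each drift term into a scheme-minus-Brownian increment. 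Inserting the true process $Y$ at $t_i,t,t_{i+1}$ and using $Y_u-Y_v-\gamma(W_u-W_v)=\int_v^uL(Y_s)\,ds$ leads to bounds of the type
\[
|(t-t_i)L(\widehat Y^n_t)|\le|\widehat Y^n_t-Y_t|+|\widehat Y^n_{t_i}-Y_{t_i}|+\int_{t_i}^{t_{i+1}}|L(Y_s)|\,ds,
\]
and an analogous bound for $(T/n)L(\widehat Y^n_{t_{i+1}})$. After $\sup_t$ and $\max_i$, the first two summands are $O(T/n)$ in $L^p$ by the recalled Alfonsi estimate, while the third is $\le (T/n)\sup_{s\in[0,T]}|L(Y_s)|$, so the whole argument reduces to the single moment bound $\mathbb{E}[\sup_{s\in[0,T]}|L(Y_s)|^p]<\infty$.

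The hard step is precisely this last bound, since $L$ may be singular at the boundary $0$. I would derive it from Itô's formula applied to the $\mathcal C^2$ function $L$,
\[
L(Y_s)=L(y)+\int_0^s\!\bigl[L'(Y_u)L(Y_u)+\tfrac{\gamma^2}{2}L''(Y_u)\bigr]du+\gamma\int_0^s L'(Y_u)\,dW_u,
\]
then take the supremum over $s\in[0,T]$, raise to the $p$-th power, and apply Minkowski together with the Burkholder--Davis--Gundy inequality on the stochastic integral. The resulting sufficient condition for $\mathbb{E}[\sup_{s\le T}|L(Y_s)|^p]<\infty$ is exactly provided by the two integrability hypotheses of \eqref{ine3}. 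Plugging everything back, $\sup_{t\in[0,T]}|\overline Y^n_t-\widehat Y^n_t|$ is $O(T/n)$ in $L^p$, and a final triangle inequality with the Alfonsi estimate on $\sup_t|\widehat Y^n_t-Y_t|$ concludes the proof.
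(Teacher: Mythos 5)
Your proof is correct and takes a genuinely different route from the paper's. The paper works with $e_t=\overline Y^n_t-Y_t$ directly: it isolates the algebraic relation between $e_t$, $e_{t_i}$ and $e_{t_{i+1}}$ (equations \eqref{et}, \eqref{et1}, \eqref{et2}), uses \eqref{ine4} and $L'_A<n/(2T)$ to bound the prefactor $\bigl|\tfrac{1-\beta_{t_{i+1}}(t_{i+1}-t)}{1-\beta_{t_{i+1}}(t_{i+1}-t_i)}\bigr|\le 3$, then applies It\^o's formula to the \emph{increment} $L(Y_{t_{i+1}})-L(Y_s)$ and Fubini so that the factor $(u-t_i)\le T/n$ appears inside the iterated integrals, and finally invokes BDG. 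You instead use Alfonsi's interpolated scheme $\widehat Y^n$ as a pivot: you show $\overline Y^n_{t_i}=\widehat Y^n_{t_i}$ at grid points (via strict monotonicity of $y\mapsto y-(T/n)L(y)$ under \eqref{ine4} and $L'_A<n/(2T)$), then you estimate $\overline Y^n_t-\widehat Y^n_t=(t-t_i)[L(\widehat Y^n_{t_{i+1}})-L(\widehat Y^n_t)]$ not by using the cancellation (which is obstructed by $L$'s singularity near $0$) but by turning each $(t-t_i)L(\widehat Y^n_\cdot)$ term into scheme-minus-Brownian increments via the implicit identity, inserting $Y$, and reducing to the single moment bound $\mathbb{E}[\sup_{s\le T}|L(Y_s)|^p]<\infty$, itself a consequence of It\^o on $L(Y)$, Minkowski, BDG, and \eqref{ine3}. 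Each approach earns its $T/n$ factor differently: the paper squeezes it out of the $(u-t_i)$ weight after Fubini, whereas you read it off explicitly from $(t-t_i)$ and $T/n$ in the implicit relations. Your version is conceptually cleaner in that it cleanly separates two facts --- the Alfonsi rate for $\widehat Y^n$ (used as a black box) and the boundedness of $\sup_s|L(Y_s)|$ in $L^p$ --- at the cost of a cruder triangle inequality; the paper's version is more self-contained (it only calls on Alfonsi's discrete Proposition 3) and exploits the cancellation $L(Y_{t_{i+1}})-L(Y_s)$ more carefully. Both are valid and land at the same rate under the same hypotheses.
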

\begin{proof} At first, for $p\ge 1$ and $t\in[0,T]$, we denote  $e_{t} = \overline{Y}^n_{t} - Y_{t}$.
By \eqref{lam}, we have for $0\le i\le n-1$,
\begin{align*}
e_{t_{i+1}} &= e_{t_{i}}+ L(\overline{Y}^n_{t_{i+1}}) (t_{i+1} - t_{i})  - \int_{t_{i}}^{t_{i+1}} L(Y_{s}) ds
\end{align*}
since  for all  $0\le i\le n-1$, $\overline Y^n_{t_i}\in I$.
As $L$ is of class $\mathcal C^2$ there exists a point  $\xi_{t_{i+1}}$ lying between $Y_{t_{i+1}}$ and $\overline{Y}^n_{t_{i+1}}$ such that 
 $L(\overline{Y}^n_{t_{i+1}})-L(Y_{t_{i+1}})= \beta_{t_{i+1}}(\overline{Y}^n_{t_{i+1}}-Y_{t_{i+1}})$ with $\beta_{t_{i+1}}=  L'\big( \xi_{t_{i+1}}\big)$. Besides,  according to the proof  \cite[Proposition 3]{alfonsi2013strong}, we know that
\begin{multline}
\label{discret}
\mathbb{E}\Big[\sup_{1\leq i\leq n}|e_{t_i}|^p\Big]
\leq K\Big(\frac{T}{n}\Big)^p\bigg[\mathbb{E}\Big[\Big(\int_0^{T}|L'(Y_u)L(Y_u)+\frac{\gamma^2}{2}L''(Y_u)|du\Big)^p\Big]\\+|\gamma|^p \mathbb{E}\Big[\Big(\int_0^{T}(L'(Y_u))^2du\Big)^{\frac{p}{2}}\Big]\bigg],
\end{multline}
where $K$ is a positive constant that depends on $T$ and $p$.
On the one hand,  we first use   \eqref{interp} to write
\begin{align*}
e_{t_{i+1}}&=e_{t_i}+L(\overline{Y}^n_{t_{i+1}})(t_{i+1}-t_i)-\int_{t_i}^{t_{i+1}} L(Y_s) ds\\
&=e_{t_i}+\Big[L(\overline{Y}^n_{t_{i+1}})-L(Y_{t_{i+1}})\Big](t_{i+1}-t_i)+\int_{t_i}^{t_{i+1}} \big(L(Y_{t_{i+1}})-L(Y_s)\big)ds\\
&=e_{t_i}+\beta_{t_{i+1}}e_{t_{i+1}}(t_{i+1}-t_i)+\int_{t_i}^{t_{i+1}} \big(L(Y_{t_{i+1}})-L(Y_s)\big)ds.
\end{align*}
It follows that
\begin{align}\label{et}
\big(1-\beta_{t_{i+1}}(t_{i+1}-t_i)\big)e_{t_{i+1}}&=e_{t_i}+\int_{t_i}^{t_{i+1}} \big(L(Y_{t_{i+1}})-L(Y_s)\big)ds.
\end{align}
On the other hand, we have  for all $t\in[t_i,t_{i+1})$
\begin{align*}
\overline{Y}^n_t&=\overline{Y}^n_{t_{i}}+L(\overline{Y}^n_{t_{i+1}})(t-t_{i})+\gamma(W_t-W_{t_{i}})\\
&=\overline{Y}^n_{t_{i}}+L(\overline{Y}^n_{t_{i+1}})(t_{i+1}-t_{i})+\gamma(W_{t_{i+1}}-W_{t_{i}})-L(\overline{Y}^n_{t_{i+1}})(t_{i+1}-t)-\gamma(W_{t_{i+1}}-W_{t})\\
&=\overline{Y}^n_{t_{i+1}}-L(\overline{Y}^n_{t_{i+1}})(t_{i+1}-t)-\gamma(W_{t_{i+1}}-W_{t}).
\end{align*}
Then, it follows that for all $t\in[t_i,t_{i+1})$
\begin{align*}
\overline{Y}^n_t-Y_t&=\overline{Y}^n_{t_{i+1}}-Y_{t_{i+1}}+Y_{t_{i+1}}-Y_t-L(\overline{Y}^n_{t_{i+1}})(t_{i+1}-t)-\gamma(W_{t_{i+1}}-W_{t})\\
e_t&=e_{t_{i+1}}+ \int_t^{t_{i+1}}L(Y_s)ds+\gamma (W_{t_{i+1}}-W_{t})-L(\overline{Y}^n_{t_{i+1}})(t_{i+1}-t)-\gamma(W_{t_{i+1}}-W_{t})\\
&=e_{t_{i+1}}-\big(L(\overline{Y}^n_{t_{i+1}})-L(Y_{t_{i+1}})\big)(t_{i+1}-t)+\int_t^{t_{i+1}} L(Y_s)-L(Y_{t_{i+1}}) ds\\
&=e_{t_{i+1}}-\beta_{t_{i+1}}(\overline{Y}^n_{t_{i+1}}-Y_{t_{i+1}})(t_{i+1}-t)+\int_t^{t_{i+1}} L(Y_s)-L(Y_{t_{i+1}}) ds.
\end{align*}
So, we deduce  that for all $t\in[t_i,t_{i+1})$
\begin{align}
\label{et1}
e_t&=(1-\beta_{t_{i+1}}(t_{i+1}-t))e_{t_{i+1}}+\int_t^{t_{i+1}} L(Y_s)-L(Y_{t_{i+1}}) ds.
\end{align}
By assumption \eqref{ine4},  on $(0,A)$ $L$ is decreasing, so
it is easy to see that  $1<1-\beta_{t_{i+1}}(t_{i+1}-t)<1-\beta_{t_{i+1}}(t_{i+1}-t_i)$.
On $ (A, \infty)$, as $L'$ is bounded and since $n> 2 L'_A T$,
we have $|1-\beta_{t_{i+1}}(t_{i+1}-t)| \leq \frac 3 2$ and $1-\beta_{t_{i+1}}(t_{i+1}-t_i)> \frac{1}{2}$.  Then, it follows that  $\Big|\frac{1-\beta_{t_{i+1}}(t_{i+1}-t)}{1-\beta_{t_{i+1}}(t_{i+1}-t_i)}\Big|\le 3.$
Now, combining \eqref{et} and \eqref{et1} we easily get 
\begin{align}
\label{et2}
e_t=\frac{1-\beta_{t_{i+1}}(t_{i+1}-t)}{1-\beta_{t_{i+1}}(t_{i+1}-t_i)}\left(e_{t_{i}}+\int_{t_i}^{t_{i+1}} \big(L(Y_{t_{i+1}})-L(Y_s)\big)ds\right)+\int_t^{t_{i+1}} L(Y_s)-L(Y_{t_{i+1}}) ds.
\end{align}
Then, by Itô's formula and Fubini theorem we get
\begin{align*}
|e_t|&\leq 3 \left(|e_{t_i}|+ \Big|\int_{t_i}^{t_{i+1}} \big(L(Y_{t_{i+1}})-L(Y_s)\big)ds \Big|\right)+ \Big|\int_t^{t_{i+1}} L(Y_s)-L(Y_{t_{i+1}}) ds \Big|\\
&\leq 3\left(|e_{t_i}|+\frac{T}{n}\int_{t_i}^{t_{i+1}}\Big| L'(Y_u)L(Y_u)+\frac{\gamma^2}{2}L''(Y_u)\Big| du+|\gamma| \Big|\displaystyle \int_{t_i}^{t_{i+1}} (u-t_i)L'(Y_u)dW_u \Big|\right)\\
&+ \frac{T}{n}\int_t^{t_{i+1}}\Big| L'(Y_u)L(Y_u)+\frac{\gamma^2}{2}L''(Y_u)\Big| du + |\gamma| \Big|\displaystyle \int_{t}^{t_{i+1}} (u-t)L'(Y_u)dW_u \Big|. 
\end{align*}
Therefore, there exists a positive constant $C_p$ such that
\begin{align*}
|e_t|^p &\leq C_p \Bigg[ |e_{t_i}|^p +2\Big(\frac{T}{n}\Big)^p \Big(\int_{t_i}^{t_{i+1}}\Big| L'(Y_u)L(Y_u)+\frac{\gamma^2}{2}L''(Y_u)\Big| du \Big)^p
+|\gamma|^p \Big|\displaystyle \int_{t_i}^{t_{i+1}} (u-t_i)L'(Y_u)dW_u \Big|^p\\&+ |\gamma|^p \Big|\displaystyle \int_{t}^{t_{i+1}} uL'(Y_u)dW_u \Big|^p+|\gamma|^p T^p\Big|\displaystyle \int_{t}^{t_{i+1}} L'(Y_u)dW_u \Big|^p\Bigg]
\end{align*}
and thus,
\begin{align*}
\displaystyle \sup_{t \in [0,T]} \vert e_t \vert^p &\leq C_p\Bigg[\sup_{0\leq i \leq n}|e_{t_i}|^p+2\Big(\frac{T}{n}\Big)^p \Big(\displaystyle \int_0^T\Big| L'(Y_u)L(Y_u)+\frac{\gamma^2}{2}L''(Y_u)\Big| du \Big)^p\\
&+ |\gamma|^p \displaystyle \sup_{0 \leq s \leq t \leq T} \Big| \displaystyle \int_s^t (u-t_{\eta(u)})L'(Y_u)dW_u \Big|^p+|\gamma|^p \displaystyle \sup_{0 \leq s \leq t \leq T} \Big| \displaystyle \int_s^t uL'(Y_u)dW_u \Big|^p\\&+|\gamma|^p T^p\displaystyle \sup_{0 \leq s \leq t \leq T} \Big|\displaystyle \int_{s}^{t} L'(Y_u)dW_u\Big|^p\Bigg] \\
& \leq  C_p\Bigg[\sup_{0\leq i \leq n}|e_{t_i}|^p+2\Big(\frac{T}{n}\Big)^p \Big(\displaystyle \int_0^T\Big| L'(Y_u)L(Y_u)+\frac{\gamma^2}{2}L''(Y_u)\Big| du \Big)^p\\
&+ 2^{p-1}|\gamma|^p \displaystyle \sup_{0  \leq t \leq T} \Big| \displaystyle \int_0^t (u-t_{\eta(u)})L'(Y_u)dW_u \Big|^p+2^{p-1}|\gamma|^p \displaystyle \sup_{0  \leq t \leq T} \Big| \displaystyle \int_0^t uL'(Y_u)dW_u \Big|^p\\&+2^{p-1}|\gamma|^p T^p\displaystyle \sup_{0 \leq t \leq T} \Big|\displaystyle \int_{0}^{t} L'(Y_u)dW_u\Big|^p\Bigg].
\end{align*}
The result follows using \eqref{discret}  and the  Burkholder-Davis-Gundy inequality. 
\end{proof}   
\begin{cor}
\label{cor0}
Assume that conditions of Theorem \ref{hypc} hold true for a given $p>1$ and $0<L'_A<\frac n{2T}$.
If in addition  the drift coefficient $L$ satisfies the following one-sided linear growth assumption:
\begin{equation}
\label{ine5}
\exists\, \alpha>0 \mbox{ such that } \;\forall\,y\in I,\;\; 
yL(y)\le \alpha(1+|y|^2)\tag{H4}
\end{equation}
then  $\mathbb{E}[\displaystyle\sup_{0\leq t \leq T} \vert \overline{Y}^n_t \vert ^p ]< \infty$.
\end{cor}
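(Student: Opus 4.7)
The strategy is to decompose the scheme as $\overline{Y}^n_t = (\overline{Y}^n_t - Y_t) + Y_t$ and to control each piece separately. By the convexity inequality $|a+b|^p \le 2^{p-1}(|a|^p + |b|^p)$,
$$\sup_{0\le t\le T}|\overline{Y}^n_t|^p \le 2^{p-1}\Bigl(\sup_{0\le t\le T}|\overline{Y}^n_t - Y_t|^p + \sup_{0\le t\le T}|Y_t|^p\Bigr).$$
Theorem \ref{hypc} immediately bounds the expectation of the first summand by $(2K_pT/n)^p$, so the task reduces to showing that $\mathbb{E}\bigl[\sup_{0\le t\le T}|Y_t|^p\bigr] < \infty$ for the continuous SDE \eqref{lam} under the additional assumption \eqref{ine5}.

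This is a classical $L^p$ moment bound and I would obtain it by applying It\^o's formula to the $\mathcal{C}^2$ function $F(y) = (1+y^2)^{p/2}$, for which $F'(y) = p y(1+y^2)^{p/2-1}$ and $F''(y) = p(1+y^2)^{p/2-1} + p(p-2) y^2 (1+y^2)^{p/2-2}$. Assumption \eqref{ine5} gives
$$F'(Y_s) L(Y_s) = p Y_s L(Y_s)(1+Y_s^2)^{p/2-1} \le p\alpha (1+Y_s^2)^{p/2} = p\alpha F(Y_s),$$
while $|F''|$ is dominated by a constant multiple of $(1+y^2)^{p/2-1}$, hence by $C_p(1 + F(y))$. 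Substituting into It\^o's formula yields
$$F(Y_t) \le F(y) + C_{p,\gamma}\int_0^t \bigl(1 + F(Y_s)\bigr) ds + \gamma\int_0^t F'(Y_s) dW_s.$$

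To handle the stochastic integral I would use the Burkholder--Davis--Gundy inequality together with the pointwise bound $(F'(Y_s))^2 \le p^2(1+Y_s^2)^{p-1}$, which controls the martingale supremum by $C_p\sqrt{t}\,\mathbb{E}\bigl[(\sup_{s\le t} F(Y_s))^{(p-1)/p}\bigr]$. An application of Young's inequality with exponent $p/(p-1)$ splits this into $\epsilon\,\mathbb{E}[\sup_{s\le t}F(Y_s)] + C_\epsilon$, and choosing $\epsilon$ small enough allows me to absorb the supremum term into the left-hand side after a standard localization argument through stopping times $\tau_N = \inf\{s : F(Y_s) > N\}$ (followed by Fatou's lemma to remove $N$). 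Gronwall's lemma then yields $\mathbb{E}[\sup_{t\le T} F(Y_t)] < \infty$, and since $|Y_t|^p \le F(Y_t)$ this closes the proof.

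The main technical point is the BDG/Young manipulation when $p>2$: the derivative $F'$ grows like $(1+y^2)^{(p-1)/2}$, strictly faster than the square root of $F$, so closing the Gronwall estimate requires the localization and reabsorption step described above. For $p\in(1,2]$ the function $F''$ and the ratio $(F')^2/F$ stay uniformly bounded in terms of $F$, and the argument becomes entirely straightforward.
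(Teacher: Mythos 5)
Your argument is essentially identical to the paper's: the same decomposition $\overline{Y}^n_t = (\overline{Y}^n_t - Y_t) + Y_t$, the same application of Theorem~\ref{hypc} to the error term, and the same reduction to the moment bound $\mathbb{E}[\sup_{0\le t\le T}|Y_t|^p]<\infty$ for the continuous solution under \eqref{ine5}. The only difference is that where the paper simply cites this bound from \cite[Lemma~3.2]{HigMaoStu}, you re-derive it via It\^o's formula applied to $(1+y^2)^{p/2}$ with BDG, Young, localization, and Gronwall — which is indeed the standard route and a correct sketch of the cited lemma's proof.
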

\begin{proof}
Under assumption \eqref{ine5}, \cite[Lemma 3.2]{HigMaoStu} ensures that for all $q>0$ 
$\mathbb E[\sup_{0\le t\le T} |Y_t|^q]<\infty$. Thus, by Theorem \ref{hypc} we get
$$\mathbb{E}[\displaystyle\sup_{0\leq t \leq T} \vert \overline{Y}^n_t \vert ^p ]\le 2^{p-1}\left(\mathbb E[\sup_{0\le t\le T} |Y_t|^p+  \mathbb{E}[\displaystyle\sup_{0\leq t \leq T} \vert \overline{Y}^n_t - \overline{Y}_t\vert ^p ]\right)<\infty.$$
\end{proof}

%
\section{The Multilevel Monte Carlo method for pricing Barrier options with the interpolated drift implicit scheme.}\label{sec:MLMC}
We are interested to approximate the following quantities of interest  of the form
$$\pi_{\mathcal D}=\mathbb{E}\Big[g(Y_T)\mathbbm{1}_{\{\tau_{\mathcal D} >T\}}\Big] \mbox { and } \pi_{\mathcal U}=\mathbb{E}\Big[g(Y_T)\mathbbm{1}_{\{\tau_{\mathcal U} >T\}}\Big],$$ where $\tau$ denotes the first passage time given by:
\begin{itemize}
\item   $\tau_{\mathcal D}=\displaystyle\inf\{t \in [0,T],Y_t \le {\mathcal D}\}$ with $y>{\mathcal D}>0$,  for a Down-Out (D-O) option,
 \item[] and
 \item   $\tau_{\mathcal U}=\displaystyle\inf\{t \in [0,T],Y_t \ge {\mathcal U}\}$ with $0<y<{\mathcal U}$, for  an Up-Out (U-O) option.
\end{itemize}
\subsection{Brownian bridge  and drift implicit scheme for pricing Barrier options}\label{subsec:BB}
For a given time grid $t_i=\frac{iT}{n}$, $i\in\{1,\dots, n\}$, we consider the Brownian interpolation of the drift implicit scheme $(\overline{Y}^n_{t})_{t\in[0,T]}$ defined in  \eqref{interp}. Then, the above option prices can be approximated respectively by 
\begin{align*}
{\overline \pi}_{\mathcal D}:=\mathbb{E}\Big[g(\overline{Y}^n_T)\displaystyle \prod_{i=0}^{n-1}\mathbf{1}_{\{\inf_{t \in[t_i,t_{i+1}]} \overline{Y}^n_t>{\mathcal D}\}}\Big]
\mbox{ and }
{\overline \pi}_{\mathcal U}:=\mathbb{E}\Big[g(\overline{Y}^n_T)\displaystyle \prod_{i=0}^{n-1}\mathbf{1}_{\{\sup _{t \in[t_i,t_{i+1}]} \overline{Y}^n_t<{\mathcal U}\}}\Big].
\end{align*}
 To get more accurate approximations, we use the Brownian bridge technique to substitute the barrier-crossing indicators by the probabilities that the approximation scheme $(\overline{Y}^n_{t})_{t\in[0,T]}$ do not cross the barrier in each time interval $[t_i,t_{i+1}]$, $i\in\{1,\dots, n\}$. In what follows, for $x\in \mathbb R$, $(x)_+$ stands for $\sup(x,0)$.
\begin{prop}\label{lem:BB} Under the above notation, 
for $h=\frac T n$, we have 
$${\overline \pi}_{\mathcal D}=\mathbb{E} \Big[g(\overline{Y}^n_T)\prod_{i=0}^{n-1}(1-\overline{q}_i)\Big],
\mbox{ where } \overline{q}_i:=\exp\Big(\frac{-2(\overline{Y}^n_{t_i}-{\mathcal D})_+(\overline{Y}^n_{t_{i+1}}-{\mathcal D})_+}{\gamma^2 h}\Big)$$
and
$${\overline \pi}_U=\mathbb{E} \Big[g(\overline{Y}^n_T)\prod_{i=0}^{n-1}(1-\overline{p}_i)\Big],
\mbox{ where } \overline{p}_i=\exp\Big(\frac{-2({\mathcal U}- \overline{Y}^n_{t_i})_+({\mathcal U}- \overline{Y}^n_{t_{i+1}})_+}{\gamma^2 h}\Big).$$
\end{prop}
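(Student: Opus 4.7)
The plan is to reduce each per-interval crossing event to the classical Brownian bridge hitting-probability formula, then use conditional independence across the partition to factor the product.

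First I would look at the interpolated scheme on one subinterval $[t_i,t_{i+1}]$. From \eqref{interp} the value at $t_{i+1}$ reads $\overline Y^n_{t_{i+1}}=\overline Y^n_{t_i}+L(\overline Y^n_{t_{i+1}})h+\gamma(W_{t_{i+1}}-W_{t_i})$, so substituting $L(\overline Y^n_{t_{i+1}})h=\overline Y^n_{t_{i+1}}-\overline Y^n_{t_i}-\gamma(W_{t_{i+1}}-W_{t_i})$ back into \eqref{interp} gives
\[
\overline Y^n_t=\overline Y^n_{t_i}+\tfrac{t-t_i}{h}\bigl(\overline Y^n_{t_{i+1}}-\overline Y^n_{t_i}\bigr)+\gamma\Bigl[(W_t-W_{t_i})-\tfrac{t-t_i}{h}(W_{t_{i+1}}-W_{t_i})\Bigr],
\]
which exhibits $(\overline Y^n_t)_{t\in[t_i,t_{i+1}]}$ as a Brownian bridge of variance parameter $\gamma^2$ pinned at $\overline Y^n_{t_i}$ and $\overline Y^n_{t_{i+1}}$.

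Next I would set up the conditioning. Let $\mathcal F^n=\sigma(W_{t_0},\dots,W_{t_n})$; since the scheme is recursively solvable, $(\overline Y^n_{t_i})_{0\le i\le n}$ is $\mathcal F^n$-measurable, and in particular $g(\overline Y^n_T)$ and each factor $\overline q_i$ are $\mathcal F^n$-measurable. The standard decomposition of Brownian increments into a discrete skeleton and independent bridges shows that, conditionally on $\mathcal F^n$, the bridge pieces on distinct subintervals are independent. Tower property then yields
\[
\mathbb E\Bigl[g(\overline Y^n_T)\prod_{i=0}^{n-1}\mathbf 1_{\{\inf_{[t_i,t_{i+1}]}\overline Y^n_t>\mathcal D\}}\Bigr]=\mathbb E\Bigl[g(\overline Y^n_T)\prod_{i=0}^{n-1}\mathbb P\bigl(\inf_{[t_i,t_{i+1}]}\overline Y^n_t>\mathcal D\,\big|\,\mathcal F^n\bigr)\Bigr].
\]

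Then I would invoke the classical closed-form for the running minimum of a Brownian bridge: if $(\beta_s)_{s\in[0,h]}$ is a Brownian motion with variance $\gamma^2$, pinned at $\beta_0=a$ and $\beta_h=b$, then for any $\mathcal D<\min(a,b)$,
\[
\mathbb P\bigl(\inf_{s\in[0,h]}\beta_s>\mathcal D\bigr)=1-\exp\!\Bigl(-\tfrac{2(a-\mathcal D)(b-\mathcal D)}{\gamma^2 h}\Bigr),
\]
while if either $a\le\mathcal D$ or $b\le\mathcal D$ the indicator vanishes identically, which is precisely what the $(\cdot)_+$ truncations encode in the definition of $\overline q_i$. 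Plugging $a=\overline Y^n_{t_i}$, $b=\overline Y^n_{t_{i+1}}$ gives $\mathbb P(\inf_{[t_i,t_{i+1}]}\overline Y^n_t>\mathcal D\mid\mathcal F^n)=1-\overline q_i$ and the first identity follows. The Up-and-Out case is handled symmetrically, either by repeating the argument with the supremum formula or by applying the Down-and-Out identity to $(\mathcal U-\overline Y^n_t)$ using that $-W$ is also a Brownian motion.

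The only subtle step is justifying the conditional factorization of the infima given the skeleton; everything else is a direct application of the Brownian bridge formula. No hard estimate is required, as the interpolation \eqref{interp} was designed precisely so that the bridge structure is preserved exactly.
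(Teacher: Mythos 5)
Your proof is correct and follows the same overall strategy as the paper's: condition on the discrete skeleton, use conditional independence across subintervals, and invoke the Brownian-bridge hitting-probability formula, with the $(\cdot)_+$ truncations absorbing the degenerate cases. Where you diverge is in how you identify the one-interval conditional law. The paper keeps the drift term $L(\overline Y^n_{t_{i+1}})(t-t_i)$ in the expression for $\overline Y^n_t$ and argues via stationarity plus a change of probability measure that the running supremum of a Brownian motion with a constant drift, conditioned on its terminal value, has the same law as that of a driftless Brownian motion conditioned on the same terminal value. You instead substitute the discrete relation $L(\overline Y^n_{t_{i+1}})h=\overline Y^n_{t_{i+1}}-\overline Y^n_{t_i}-\gamma(W_{t_{i+1}}-W_{t_i})$ back into \eqref{interp}, which exhibits $(\overline Y^n_t)_{t\in[t_i,t_{i+1}]}$ directly as the linear interpolation of the two pinned endpoints plus $\gamma$ times an independent zero-pinned Brownian bridge. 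This algebraic rewriting is arguably the cleaner route: it makes the bridge structure manifest without appealing to a Girsanov-type argument, and it makes the conditional independence across subintervals immediate, since the zero-pinned bridges on disjoint subintervals are (unconditionally) independent of each other and of the skeleton $\sigma$-algebra $\mathcal F^n$. Both routes land on the same closed-form formula, so the conclusion is identical; the paper's approach has the mild advantage of not requiring one to first verify the substitution identity, but yours avoids invoking a change of measure.
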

\begin{proof} For the U-O barrier option, we first  notice that conditionally on $(\overline{Y}^n_{0},\overline{Y}^n_{t_1},\dots,\overline{Y}^n_{T})$,  the barrier-crossing indicators  $(\mathbf{1}_{\{\sup _{t \in[t_i,t_{i+1}]} \overline{Y}^n_t<{\mathcal U}\}}, i\in\{1,\dots, n\})$ are independent, we write
\begin{align*}
{\overline \pi}_{\mathcal U}&=\mathbb{E}\Big[g(\overline{Y}^n_T)\mathbb{E}\big[\prod_{i=0}^{n-1}\mathbbm{1}_{\{\sup _{t \in[t_i,t_{i+1}]} \overline{Y}^n_t<{\mathcal U}\}}|\overline{Y}^n_{0},\overline{Y}^n_{t_1},\dots,\overline{Y}^n_{T}\big]\Big]\\
&=\mathbb{E}\Big[g(\overline{Y}^n_T)\prod_{i=0}^{n-1}\mathbb{E}\big[\mathbbm{1}_{\{\sup_{t \in[t_i,t_{i+1}]} \overline{Y}^n_t<{\mathcal U}\}}|\overline{Y}^n_{t_i},\overline{Y}^n_{t_{i+1}}\big]\Big]\\
&=\mathbb{E}\Big[g(\overline{Y}^n_T)\prod_{i=0}^{n-1}\big(1-\varphi(\overline{Y}^n_{t_i},\overline{Y}^n_{t_{i+1}})\big)\Big],
\end{align*}
where, for $y_i,y_{i+1}\in I$, $\varphi(y_i,y_{i+1})=\mathbb{P}\big(\sup _{t \in[t_i,t_{i+1}]}\overline{Y}^n_t \ge {\mathcal U}|\overline{Y}^n_{t_i}={y_i},\overline{Y}^n_{t_{i+1}}=y_{i+1}\big)$. Without loss of generality, we may assume $\gamma>0$, the same arguments below work for $\gamma<0$ using that $(W_t)_{t\ge0}$ and $(-W_t)_{t\ge0}$ have the same law. By\eqref{interp}, we write 
\begin{align*}
\sup_{t\in[t_i,t_{i+1}]}\overline{Y}^n_t&= \overline{Y}^n_{t_i}+ \gamma \sup_{t\in[t_i,t_{i+1}]}\left[W_t-W_{t_i}+\frac{1}{\gamma}L(\overline{Y}^n_{t_{i+1}})(t-t_i)\right],\mbox{ with}\\
W_{t_{i+1}}-W_{t_i}&+\frac{1}{\gamma}L(\overline{Y}^n_{t_{i+1}})(t_{i+1}-t_i)=\frac{1}{\gamma}(\overline{Y}^n_{t_{i+1}}-\overline{Y}^n_{t_i}).
\end{align*}
By the stationarity property of the brownian increments and using a change of probability measure, we easily get that the law of 
$\sup_{t\in[t_i,t_{i+1}]}\left[W_t-W_{t_i}+\frac{1}{\gamma}L(y_{i+1})(t-t_i)\right]$ given $W_{t_{i+1}}-W_{t_i}+\frac{1}{\gamma}L(y_{i+1})(t_{i+1}-t_i)=\frac{1}{\gamma}(y_{i+1}-y_i)
$ is equal to the law of $\sup_{t\in[0,t_{1}]}W_t$ given $W_{t_{1}}=\frac{1}{\gamma}(y_{i+1}-y_i)$ which is given by $
\mathbb{P}\Big(\displaystyle\sup_{t\in[0, t_1]} W_{t_1}\geq y\Big| W_{t_1}=x\Big)= 
\e^{\frac{-2(y)_+(y-x)_+}{h}} 
$ (see e.g. \cite[p.\ 265]{karatzas1991brownian}).
Thus, we get
\begin{align*}
\varphi(y_i,y_{i+1})&=\mathbb{P}\Big(\sup _{t \in[0,t_1]} W_t \ge \frac{1}{\gamma}({\mathcal U}-y_i)|W_{t_1}=\frac{1}{\gamma}(y_{i+1}-y_i)\Big)\\
&=\exp\Big(\frac{-2({\mathcal U}- y_i)_+({\mathcal U}- y_{i+1})_+}{\gamma^2 h}\Big).
\end{align*}
The same arguments applied to $(-W_t)_{t\ge0}$ work for the Down-Out barrier option. \end{proof}
 \subsection{The interpolated drift implicit Euler scheme MLMC method analysis.}
 We consider the drift implicit scheme $(\overline{Y}_{t_i}^{2^{\ell}})_{0\le i \le 2^{\ell}}$
given in \eqref{scheme} that approximates  $(Y_t)_{0\le t\le T}$ solution to \eqref{lam} 
 using a time step $h_{\ell}=2^{-\ell}T$ for $\ell\in\{0,...,L\}$, with $L={\log n}/{\log 2}$, where $n$ denotes the finest time step number. Let  $(\overline{Y}_{t}^{2^{\ell}})_{0\le t \le T}$ denote  the  Brownian interpolation of  the  drift implicit scheme defined in \eqref{interp} with time step $h_{\ell}$. As the same arguments work for both   Down-Out and Up-Out  barrier options, we give details only for the latter one.  To do so, we introduce
 \begin{align}
 \label{sm}
     \overline{P}_{\ell}:=g(\overline{Y}_T^{2^{\ell}})\prod_{i=0}^{2^{\ell}-1}\mathbf 1_{\{ \sup_{t\in[t^{\ell}_i, t^{\ell}_{i+1}]}\overline{Y}_{t}^{2^{\ell}}<{\mathcal U}\}},
    \quad  \mbox{ where }  t^{\ell}_i=\frac{iT}{2^{\ell}} \quad \mbox{ for } \ell\in\{0,...,L\},
\end{align}
and write
 \begin{align}
 \label{mlmc}
   \overline{\pi}_{\mathcal U} =\mathbb{E}\big[\overline{P}_L\big]= \mathbb{E}\big[\overline{P}_0\big]+ \sum_{\ell=1}^L\mathbb{E}\big[\overline{P}_{\ell}- \overline{P}_{\ell-1}\big],
 \end{align}
 where $ \overline{\pi}_{\mathcal U}$ is introduced in subsection \ref{subsec:BB}.
On the one hand, applying Proposition \ref{lem:BB} yields
\begin{align}\label{pf}
\mathbb E\big[\overline{P}_{\ell}\big]=\mathbb E\big[\overline{P}_{\ell}^{f}\big], \mbox{ where }\overline{P}_{\ell}^{f}:&=g(\overline{Y}_T^{2^{\ell}})\prod_{i=0}^{2^{\ell}-1}(1-\overline{p}_{i}^{2^{\ell}}) \mbox{ with }\\
\overline{p}_{i}^{2^{\ell}}&=\exp\Big(\frac{-2({\mathcal U}-\overline{Y}_{t^{\ell}_i}^{2^{\ell}})_+({\mathcal U}-\overline{Y}_{t^{\ell}_{i+1}}^{2^{\ell}})_+}{\gamma^2 h_{\ell}}\Big)\notag.
\end{align}
On the other hand,  we write
\begin{align*}
 &\mathbb E\big[\overline{P}_{\ell-1}\big]
 =\mathbb E\big[g(\overline{Y}_T^{2^{\ell-1}})\prod_{i=0}^{2^{\ell-1}-1}\mathbb E\big[\mathbf 1_{\{ \sup_{t\in[t^{\ell-1}_i, t^{\ell-1}_{i+1}]}\overline{Y}_{t}^{2^{\ell-1}}<{\mathcal U}\}}|\overline{Y}_{t^{\ell-1}_i}^{2^{\ell-1}},\overline{Y}_{t^{\ell}_{2i+1}}^{2^{\ell-1}} ,\overline{Y}_{t^{\ell-1}_{i+1}}^{2^{\ell-1}} \big]\big]\\
 &=\mathbb E\big[g(\overline{Y}_T^{2^{\ell-1}})\prod_{i=0}^{2^{\ell-1}-1}\mathbb E\big[\mathbf 1_{\{ \sup_{t\in[t^{\ell-1}_i, t^{\ell}_{2i+1}]}\overline{Y}_{t}^{2^{\ell-1}}<{\mathcal U}\}}\mathbf 1_{\{ \sup_{t\in[t^{\ell}_{2i+1}, t^{\ell-1}_{i+1}]}\overline{Y}_{t}^{2^{\ell-1}}<{\mathcal U}\}}| \overline{Y}_{t^{\ell-1}_i}^{2^{\ell-1}},\overline{Y}_{t^{\ell}_{2i+1}}^{2^{\ell-1}} ,\overline{Y}_{t^{\ell-1}_{i+1}}^{2^{\ell-1}}\big]\big],
\end{align*}
where the coarse scheme $\overline{Y}_{t^{\ell}_{2i+1}}^{2^{\ell-1}} $ 
is computed using our Brownian interpolation scheme \eqref{interp}  that is
$$
 \overline{Y}_{t^{\ell}_{2i+1}}^{2^{\ell-1}}=\overline{Y}_{t^{\ell-1}_{i}}^{2^{\ell-1}}+L(\overline{Y}_{t^{\ell-1}_{i+1}}^{2^{\ell-1}})({t^{\ell}_{2i+1}}-t^{\ell-1}_{i})+\gamma(W_{t^{\ell}_{2i+1}}-W_{t^{\ell-1}_{i}}).
$$ 
Thus,  we rewrite $ \sup_{t\in[t^{\ell-1}_i, t^{\ell}_{2i+1}]}\overline{Y}_{t}^{2^{\ell-1}}$ and $\sup_{t\in[ t^{\ell}_{2i+1},t^{\ell-1}_{i+1}]}\overline{Y}_{t}^{2^{\ell-1}}$ as follows
\begin{align*}
 \sup_{t\in[t^{\ell-1}_i, t^{\ell}_{2i+1}]}\overline{Y}_{t}^{2^{\ell-1}}&=\overline{Y}_{t^{\ell-1}_{i}}^{2^{\ell-1}}+\gamma \sup_{t\in[t^{\ell-1}_i, t^{\ell}_{2i+1}]}\left(W_t- W_{t^{\ell-1}_{i}}+
 \frac1 \gamma L(\overline{Y}_{t^{\ell-1}_{i+1}}^{2^{\ell-1}})(t-t^{\ell-1}_{i})\right),\mbox{ with }\\
 W_{t^{\ell}_{2i+1}}- W_{t^{\ell-1}_{i}}&+
 \frac1 \gamma L(\overline{Y}_{t^{\ell}_{i+1}}^{2^{\ell-1}})({t^{\ell}_{2i+1}}-t^{\ell-1}_{i})=
 \frac 1\gamma \left( \overline{Y}_{t^{\ell}_{2i+1}}^{2^{\ell-1}}- \overline{Y}_{t^{\ell-1}_{i}}^{2^{\ell-1}}\right)
 \end{align*}
 and
 \begin{align*}
 \sup_{t\in[ t^{\ell}_{2i+1},t^{\ell-1}_{i+1}]}\overline{Y}_{t}^{2^{\ell-1}}&=\overline{Y}_{t^{\ell}_{2i+1}}^{2^{\ell-1}}
 +\gamma \sup_{t\in[t^{\ell-1}_i, t^{\ell}_{2i+1}]}\left(W_t- W_{t^{\ell}_{2i+1}}+
 \frac1 \gamma L(\overline{Y}_{t^{\ell-1}_{i+1}}^{2^{\ell-1}})(t-t^{\ell}_{2i+1})\right), \mbox{ with }\\
 W_{t^{\ell-1}_{i+1}}- W_{t^{\ell}_{2i+1}}&+
 \frac1 \gamma L(\overline{Y}_{t^{\ell-1}_{i+1}}^{2^{\ell-1}})(t^{\ell-1}_{i+1}-t^{\ell}_{2i+1})=
 \frac 1\gamma \left( \overline{Y}_{t^{\ell-1}_{i+1}}^{2^{\ell-1}}-\overline{Y}_{t^{\ell}_{2i+1}}^{2^{\ell-1}}\right).
\end{align*}
Then, using the independence of the Brownian increments and  the same arguments as in the proof of Proposition \ref{lem:BB}, we get
\begin{align}\label{pc}
\mathbb E\big[\overline{P}_{\ell-1}\big]=\mathbb E\big[\overline{P}_{\ell-1}^{c}\big], \mbox{ where }\overline{P}_{\ell-1}^{c}:&=g(\overline{Y}_T^{2^{\ell-1}})\prod_{i=0}^{2^{\ell-1}-1}(1-\overline{p}_{i,1}^{2^{\ell-1}})(1-\overline{p}_{i,2}^{2^{\ell-1}}) \mbox{ with }\\
\overline{p}_{i,1}^{2^{\ell-1}}&=\exp\Big(\frac{-2({\mathcal U}-\overline{Y}_{t^{\ell-1}_i}^{2^{\ell-1}})_+({\mathcal U}-\overline{Y}_{t^{\ell}_{2i+1}}^{2^{\ell-1}})_+}{\gamma^2 h_{\ell}}\Big)\notag,\\
\overline{p}_{i,2}^{2^{\ell-1}}&=\exp\Big(\frac{-2({\mathcal U}-\overline{Y}_{t^{\ell}_{2i+1}}^{2^{\ell-1}})_+({\mathcal U}-\overline{Y}_{t^{\ell-1}_{i+1}}^{2^{\ell-1}})_+}{\gamma^2 h_{\ell}}\Big),\notag
\end{align}
which can be rewritten as 
\begin{equation}\label{pcnum}
\overline{P}_{\ell-1}^{c}:=g(\overline{Y}_T^{2^{\ell-1}})\prod_{i=0}^{2^{\ell}-1}(1-\overline{p}_{i}^{2^{\ell-1}})\mbox{ with } \overline{p}_{i}^{2^{\ell-1}}=\exp\Big(\frac{-2({\mathcal U}-\overline{Y}_{t^{\ell}_i}^{2^{\ell-1}})_+({\mathcal U}-\overline{Y}_{t^{\ell}_{i+1}}^{2^{\ell-1}})_+}{\gamma^2 h_{\ell}}\Big),
\end{equation}
where the coarse approximation scheme $\overline{Y}_{t^{\ell}_i}^{2^{\ell-1}}$ for odd index $i\in\{0,...,2^{\ell}-1\}$ is computed using the Brownian interpolation scheme \eqref{interp}.
Thus, the improved MLMC method approximates $\bar \pi_U$ 
by
\begin{equation}\label{MLMC:U}
\bar P_U:=\frac{1}{N_0}\sum_{k=1}^{N_0} \overline{P}^f_{0,k} + \sum_{\ell =1}^L \frac{1}{N_{\ell}} \sum_{k=1}^{N_{\ell}} \Big(\overline{P}_{\ell,k}^{f}-\overline{P}_{\ell-1,k}^{c}\Big),
\end{equation}
 where the condition  $\mathbb E\big[\overline{P}_{\ell-1}^{f}\big]=\mathbb E\big[\overline{P}_{\ell-1}^{c}\big]$ is satisfied according to \eqref{pf} and \eqref{pc}.
Here,  $(\overline{P}_{\ell,k}^{f})_{1\le k\le N_{\ell}}$ and 
$(\overline{P}_{\ell-1,k}^{c})_{1\le k\le N_{\ell}}$ are respectively independent copies of  $\overline{P}_{\ell}^{f}$ and $\overline{P}_{\ell-1}^{c}$ given by \eqref{pf} and \eqref{pcnum}.  Similarly,  the improved MLMC method approximates $\bar \pi_{\mathcal D}$ 
by
\begin{equation}\label{MLMC:D}
\bar Q_{\mathcal D}:=\frac{1}{N_0}\sum_{k=1}^{N_0} \overline{Q}^f_{0,k} + \sum_{\ell =1}^L \frac{1}{N_{\ell}} \sum_{k=1}^{N_{\ell}} \Big(\overline{Q}_{\ell,k}^{f}-\overline{Q}_{\ell-1,k}^{c}\Big),
\end{equation}
 where  $(\overline{Q}_{\ell,k}^{f})_{1\le k\le N_{\ell}}$ and 
$(\overline{Q}_{\ell-1,k}^{c})_{1\le k\le N_{\ell}}$ are respectively independent copies of $\overline{Q}_{\ell}^{f}$ and $\overline{Q}_{\ell-1}^{c}$ given by 
\begin{align*}
\overline{Q}_{\ell}^{f}&:=g(\overline{Y}_T^{2^{\ell}})\prod_{i=0}^{2^{\ell}-1}(1-\overline{q}_{i}^{2^{\ell}}) \mbox{ with }
\overline{q}_{i}^{2^{\ell}}=\exp\Big(\frac{-2(\overline{Y}_{t^{\ell}_i}^{2^{\ell}}-{\mathcal D})_+(\overline{Y}_{t^{\ell}_{i+1}}^{2^{\ell}}-{\mathcal D})_+}{\gamma^2 h_{\ell}}\Big)\\
\overline{Q}_{\ell-1}^{c}&:=g(\overline{Y}_T^{2^{\ell-1}})\prod_{i=0}^{2^{\ell}-1}(1-\overline{q}_{i}^{2^{\ell-1}})\mbox{ with } \overline{q}_{i}^{2^{\ell-1}}=\exp\Big(\frac{-2(\overline{Y}_{t^{\ell}_i}^{2^{\ell-1}}-{\mathcal D})_+(\overline{Y}_{t^{\ell}_{i+1}}^{2^{\ell-1}}-{\mathcal D})_+}{\gamma^2 h_{\ell}}\Big).
\end{align*}


In what follows, we  need to strengthen our assumption \eqref{ine3}  as below.
 \begin{align}\label{ine3tilde}
 \mbox{For $p\ge 1$, assumption \eqref{ine3} is valid and }  \sup_{t\in[0,T]}\mathbb{E}\Big[|L(Y_t)|^p\Big]< \infty\tag{$\rm{\tilde H}2$}.
\end{align}
\begin{lem}
\label{eq7} Assume that conditions \eqref{ine3tilde}, \eqref{ine4} and \eqref{ine5} are satisfied for a given $p>1$ and $0<L'_A<\frac 1{2h_{\ell}}$, with $h_{\ell}=2^{-\ell}T$  sufficiently small. Let  $\eta\in(0,1)$, the following extreme path events satisfy
\begin{align}\label{ext:1}
&\mathbb P\left (\max\Big(\sup_{0\leq i \leq 2^\ell}(\vert Y_{t^{\ell}_i}\vert, \vert \overline{Y}_{t^{\ell}_i}^{2^{\ell}}\vert,  \vert  \overline{Y}_{t^{\ell}_i}^{2^{\ell-1}} \vert)\Big) > h_{\ell}^{-\eta}\right)=o(h_{\ell}^q)\\\label{ext:2}
&\mathbb P\left (\max\Big(\sup_{0\leq i \leq 2^\ell} \big(\vert Y_{t^{\ell}_i}- \overline{Y}_{t^{\ell}_i}^{2^{\ell}} \vert, \vert Y_{t^{\ell}_i}-\overline{Y}_{t^{\ell}_i}^{2^{\ell-1}} \vert, \vert \overline{Y}_{t^{\ell}_i}^{2^{\ell}} -\overline{Y}_{t^{\ell}_i}^{2^{\ell-1}} \vert \big)\Big) > h_{\ell}^{1-\eta}\right)=o(h_{\ell}^q)\\
&\sup_{0\leq i \leq 2^\ell}  \mathbb P\left (\int_{t^{\ell}_{i}}^{t^{\ell}_{i+1}}|L(Y_s) |ds >h_{\ell}^{1-\eta} \right)=o(h_{\ell}^q) \label{ext:4}\\
& 
\mathbb P\left (\sup_{t\in[0,T]}\vert \overline{Y}_t^{2^{\ell}}-Y_t \vert >h_{\ell}^{1-\eta}\right)=o(h_{\ell}^q) \label{ext:3}\\
& \mbox{for all } 0<q< p\eta, \mbox{and }\notag\\
&\sup_{0\leq i \leq 2^\ell}  \mathbb P\left (\sup_{t\in[t^{\ell}_{i},t^{\ell}_{i+1}]}\vert W_{t}- W_{t^{\ell}_{i}}\vert > h_{\ell}^{\frac 12-\eta}\right)=o(h_{\ell}^q), \; \mbox{ for all } {q>0}.\label{ext:5}
\end{align}
\end{lem}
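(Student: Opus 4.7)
My plan is to derive each of the five tail bounds by applying Markov's inequality to moment estimates that are already assembled in the paper. The ingredients are: assumption \eqref{ine5} together with \cite[Lemma 3.2]{HigMaoStu} yielding $\mathbb{E}[\sup_{t\le T}|Y_t|^p]<\infty$; Corollary \ref{cor0} giving $\mathbb{E}[\sup_{t\le T}|\overline{Y}^n_t|^p]<\infty$; Theorem \ref{hypc} giving $\mathbb{E}^{1/p}[\sup_t|\overline{Y}^n_t-Y_t|^p]\le K_p T/n$; and the strengthened assumption \eqref{ine3tilde} providing $\sup_{t\le T}\mathbb{E}|L(Y_t)|^p<\infty$. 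The smallness of $h_\ell$ is used to ensure that the constraint $L'_A<1/(2h_{\ell-1})$ holds too, so that Theorem \ref{hypc} and Corollary \ref{cor0} apply at both levels $\ell$ and $\ell-1$ simultaneously.

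For \eqref{ext:1}, Markov's inequality at threshold $h_\ell^{-\eta}$ with exponent $p$ gives
$$\mathbb{P}\Big(\sup_{0\le i\le 2^\ell}|\overline{Y}^{2^\ell}_{t^\ell_i}|>h_\ell^{-\eta}\Big)\le h_\ell^{p\eta}\,\mathbb{E}\big[\sup_{t\le T}|\overline{Y}^{2^\ell}_t|^p\big]=O(h_\ell^{p\eta}),$$
and analogously for the two other suprema. A union bound yields the stated $o(h_\ell^q)$ rate for $q<p\eta$. For \eqref{ext:2} and \eqref{ext:3}, Theorem \ref{hypc} at both levels, combined via the triangle inequality to control $\overline{Y}^{2^\ell}-\overline{Y}^{2^{\ell-1}}$, gives collective $L^p$ bounds of order $h_\ell^p$; Markov at threshold $h_\ell^{1-\eta}$ then converts this to $O(h_\ell^{p\eta})=o(h_\ell^q)$. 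Note that the discrete-time suprema appearing in \eqref{ext:2} are trivially dominated by the continuous-time ones directly controlled by Theorem \ref{hypc}.

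For \eqref{ext:4}, Jensen's inequality applied to the inner integral and Fubini combined with the new moment bound in \eqref{ine3tilde} yield
$$\mathbb{E}\Big[\Big(\int_{t^\ell_i}^{t^\ell_{i+1}}|L(Y_s)|\,ds\Big)^p\Big]\le h_\ell^{p-1}\int_{t^\ell_i}^{t^\ell_{i+1}}\mathbb{E}|L(Y_s)|^p\,ds\le C\,h_\ell^p,$$
uniformly in $i$, and Markov at threshold $h_\ell^{1-\eta}$ finishes the step. Finally, \eqref{ext:5} is purely a statement about Brownian increments: by stationarity and Brownian scaling, $\sup_{t\in[t^\ell_i,t^\ell_{i+1}]}|W_t-W_{t^\ell_i}|$ has the same law as $h_\ell^{1/2}\sup_{s\in[0,1]}|W_s|$, whose supremum admits finite moments of every order by the reflection principle. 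Markov with an arbitrarily large exponent $r$ then produces $O(h_\ell^{r\eta})$, which beats any prescribed $h_\ell^q$.

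No step presents a genuine obstacle; the proof is essentially a bookkeeping exercise with Markov's inequality. The main items to handle with care are (i) checking that the constraint on $L'_A$ passes to the coarse level and (ii) ensuring uniformity in $i$ of the bounds in \eqref{ext:4} and \eqref{ext:5}, which follows from the uniform-in-$t$ moment bound in \eqref{ine3tilde} and from Brownian stationarity, respectively.
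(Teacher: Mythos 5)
Your proposal is correct and follows essentially the same route as the paper: Markov's inequality applied to the moment estimates provided by Corollary \ref{cor0}, Theorem \ref{hypc}, \eqref{ine3tilde}, and \cite[Lemma 3.2]{HigMaoStu}, with a union bound for the composite events and the triangle inequality to pass from the level-$\ell$ error to the $\ell$-vs-$(\ell-1)$ difference. The only cosmetic difference is that you apply Markov directly at exponent $p$ (obtaining the rate $O(h_{\ell}^{p\eta})$ and deducing $o(h_{\ell}^q)$ for $q<p\eta$), whereas the paper introduces an intermediate exponent $m$ with $q/\eta<m\le p$; the two are interchangeable. Your treatment of \eqref{ext:5} by Brownian stationarity and scaling is if anything slightly more explicit than the paper's, which needs this scaling implicitly (bounding by $\mathbb{E}[\sup_{t\le T}|W_t|^m]$ alone would not produce the required $h_{\ell}^{m\eta}$), so that remark is a welcome clarification rather than a divergence. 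The observation about ensuring the scheme condition $L'_A<1/(2h_{\ell-1})$ at the coarse level is also a careful point the paper leaves implicit.
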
  
 \begin{proof}
For the first extreme path property, we have
\begin{multline*}
\mathbb{P}\big(\max(\sup_{0\leq i \leq 2^\ell}(\vert Y_{t^{\ell}_i}\vert, \vert \overline{Y}_{t^{\ell}_i}^{2^{\ell}}\vert,  \vert  \overline{Y}_{t^{\ell}_i}^{2^{\ell-1}} \vert))> h_{\ell}^{-\eta}\big)\\
\leq  \mathbb{P}\big(\sup_{0\leq i \leq 2^\ell}|Y_{t^{\ell}_i}|>h_{\ell}^{-\eta} \big)+ \mathbb{P}\big(\sup_{0\leq i \leq 2^\ell}\vert \overline{Y}_{t^{\ell}_i}^{2^{\ell}}\vert > h_{\ell}^{-\eta} \big)+ \mathbb{P}\big(\sup_{0\leq i \leq 2^\ell}\vert \overline{Y}_{t^{\ell}_i}^{2^{\ell-1}}| > h_{\ell}^{-\eta}\big).
\end{multline*}
Then, by Markov's inequality we get  for $m\ge 1$
\begin{multline*}
\mathbb{P}\big(\max(\sup_{0\leq i \leq 2^\ell}(\vert Y_{t^{\ell}_i}\vert, \vert \overline{Y}_{t^{\ell}_i}^{2^{\ell}}\vert,  \vert  \overline{Y}_{t^{\ell}_i}^{2^{\ell-1}} \vert))> h_{\ell}^{-\eta}\big)\leq\\
{h_{\ell}^{m\eta}}\Big(\mathbb{E}[\sup_{0\le t\le T}|Y_{t}|^{m}]+\mathbb{E}[\sup_{0\le t\le T}|\overline{Y}_{t}^{2^{\ell}}\vert^{m} ]+ \mathbb{E}[\sup_{0\le t\le T}\vert \overline{Y}_{t}^{2^{\ell-1}}|^{m}]\Big).
\end{multline*}
The result follows by  Corollary \ref{cor0} for $h_{\ell}$ sufficiently small with choosing $m$ such that  $0<\frac{q}{\eta}<m\le p$. For the second extreme path property, we proceed in the same way to get  for $m\ge1$ 
\begin{multline*}
\mathbb P\left (\max\Big(\sup_{0\leq i \leq 2^\ell} \big(\vert Y_{t^{\ell}_i}- \overline{Y}_{t^{\ell}_i}^{2^{\ell}} \vert, \vert Y_{t^{\ell}_i}-\overline{Y}_{t^{\ell}_i}^{2^{\ell-1}} \vert, \vert \overline{Y}_{t^{\ell}_i}^{2^{\ell}} -\overline{Y}_{t^{\ell}_i}^{2^{\ell-1}} \vert \big)\Big) > h_{\ell}^{1-\eta}\right)\\
\le {h_{\ell}^{-m(1-\eta) }}\Big(\mathbb{E}[\sup_{0\le t\le T}|Y_{t}- \overline{Y}_{t}^{2^{\ell}} |^{m}]+\mathbb{E}[\sup_{0\le t\le T}|Y_{t}- \overline{Y}_{t}^{2^{\ell-1}} |^{m}]+ \mathbb{E}[\sup_{0\le t\le T}\vert \overline{Y}_{t}^{2^{\ell}}-\overline{Y}_{t}^{2^{\ell-1}}|^{m}]\Big).
\end{multline*} 
Thus, we deduce the result using Theorem \ref{hypc} with choosing $m$ such that $0<\frac{q}{\eta}<m\le p$, for $h_{\ell}$ sufficiently small. 
For the third extreme path, we proceed in the same way to get that for all ${0\leq i \leq 2^\ell}$, using Jensen's inequality
\begin{align*}
\mathbb P\left (\int_{t^{\ell}_{i}}^{t^{\ell}_{i+1}}|L(Y_s) |ds >h_{\ell}^{1-\eta} \right)&\le h_{\ell}^{m(\eta-1)}  \mathbb E\left( \left|\int_{t^{\ell}_{i}}^{t^{\ell}_{i+1}}|L(Y_s) |ds \right|^m\right)\\
&\le h_{\ell}^{m\eta-1}  \mathbb E\left( \int_{t^{\ell}_{i}}^{t^{\ell}_{i+1}}|L(Y_s) |^mds \right)\\
&\le h_{\ell}^{m\eta}  \sup_{t\in[0,T]}\mathbb E\left( |L(Y_s) |^m \right).
\end{align*}
Then we conclude using \eqref{ine3tilde} by choosing $m$ such that $0<\frac{q}{\eta}<m\le p$, 
for $h_{\ell}$ sufficiently small.
The fourth  extreme path property  follows in the same way as the second one.  Finally, the last property follows using that
$
\mathbb E\left[\sup_{t\in[0,T]}|W_t|^m \right] 
$
is finite for any positive power $m$.
\end{proof}
Now, we are able to state our main theorem for the MLMC method to price barrier options when the underlying asset has possibly non-Lipschitz coefficients.
\begin{thm}\label{MLMC:var} Let $g$ denote a payoff function satisfying : $\exists C>0 \mbox{ s.t. }\forall x,y>0$, 
\begin{align}\label{cond:payoff}
  |g(x)-g(y)|\le C|x-y|(1+|x|^{\nu}+ |y|^{\nu}) \mbox{ and } |g(x)|\le C(1+|x|^{\nu+1}), \mbox{ with }\nu\in \mathbb R_+.
\end{align}
Moreover, assume that conditions  \eqref{ine3tilde}, \eqref{ine4} and \eqref{ine5} are satisfied for   $p>\frac{(1+\delta)(1+\gamma)[7(1+\varepsilon)+2\nu]}{\frac 12 -\delta }$, with $\varepsilon,\gamma>0$, $\delta \in(0,1/2)$ and $0<L'_A<\frac 1{2h_{\ell}}$, with $h_{\ell}=2^{-\ell}T$  sufficiently small. If in addition 
 $\inf_{t \in [0, T]}Y_{t}$  (resp.  $\sup_{t \in [0, T]}Y_{t}$) has a bounded density in the neighborhood of the barrier ${\mathcal D}$ (resp. ${\mathcal U}$), then the multilevel estimator $\bar Q_{\mathcal D}$ given by \eqref{MLMC:D} (resp. $\bar P_{\mathcal U}$ given by \eqref{MLMC:U}  ) for the D-O (resp.~U-O) barrier option satisfies  
 $\rm{Var}(\overline{Q}_{\ell}^f-\overline{Q}_{\ell}^c)= O(h_{\ell}^{1 + \delta})$  
 (resp.  $\rm{Var}(\overline{P}_{\ell}^f-\overline{P}_{\ell}^c)= O(h_{\ell}^{1 + \delta})$).
\end{thm}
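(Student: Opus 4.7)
The plan is to focus on the Up-and-Out case; the Down-and-Out case follows by symmetric arguments applied to $-Y_t+2\mathcal D$. I will bound $\mathrm{Var}(\overline{P}_\ell^f-\overline{P}_{\ell-1}^c)$ by $\mathbb E[(\overline{P}_\ell^f-\overline{P}_{\ell-1}^c)^2]$ and split the sample space using the non-extreme event $\mathcal N_\ell$ on which all inequalities of Lemma~\ref{eq7} hold, for a parameter $\eta\in(0,1/2)$ to be optimized. On $\mathcal N_\ell^c$, a Cauchy--Schwarz argument combined with the uniform bound $0\le 1-\overline{p}_i\le 1$, the fourth-moment bound on $g(\overline{Y}_T^{2^\ell})$ from \eqref{cond:payoff} and Corollary~\ref{cor0}, and $\mathbb P(\mathcal N_\ell^c)=o(h_\ell^q)$ (arbitrarily large $q$, provided $p$ in \eqref{ine3tilde} is large enough) gives a negligible $o(h_\ell^{q/2})$ contribution.

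The crucial simplification comes from \eqref{pcnum}: the coarse estimator is rewritten with Brownian-bridge probabilities on the \emph{fine} grid, so both estimators share the mesh $\{t_i^\ell\}$. I decompose
\[
\overline{P}_\ell^f-\overline{P}_{\ell-1}^c
= \bigl(g(\overline{Y}_T^{2^\ell})-g(\overline{Y}_T^{2^{\ell-1}})\bigr)\,\Pi^f
+ g(\overline{Y}_T^{2^{\ell-1}})\,\bigl(\Pi^f-\Pi^c\bigr),
\]
with $\Pi^f:=\prod_{i=0}^{2^\ell-1}(1-\overline{p}_i^{2^\ell})$ and $\Pi^c:=\prod_{i=0}^{2^\ell-1}(1-\overline{p}_i^{2^{\ell-1}})$. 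The first term's expected square is $O(h_\ell^2)$ by combining $|\Pi^f|\le 1$, the Lipschitz-with-polynomial-growth property \eqref{cond:payoff}, Theorem~\ref{hypc}, Corollary~\ref{cor0}, and Cauchy--Schwarz.

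For the harder term I exploit the conditional-expectation representation
\[
\Pi^f = \mathbb P\bigl(\sup_{t\in[0,T]}\overline{Y}_t^{2^\ell}<\mathcal U\,\bigm|\,\mathcal F\bigr),
\qquad
\Pi^c = \mathbb P\bigl(\sup_{t\in[0,T]}\overline{Y}_t^{2^{\ell-1}}<\mathcal U\,\bigm|\,\mathcal F\bigr),
\]
with $\mathcal F=\sigma(W_{t_i^\ell},\,0\le i\le 2^\ell)$; this follows from Proposition~\ref{lem:BB} and conditional independence of the Brownian bridges on disjoint fine intervals. The conditional Jensen inequality then yields $(\Pi^f-\Pi^c)^2\le \mathbb E[|\mathbf{1}_f-\mathbf{1}_c|\mid\mathcal F]$, where $\mathbf{1}_f, \mathbf{1}_c$ denote the non-crossing indicators of the continuous-time fine and coarse interpolated schemes. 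Since $g(\overline{Y}_T^{2^{\ell-1}})$ is $\mathcal F$-measurable, the tower property gives
$\mathbb E[g(\overline{Y}_T^{2^{\ell-1}})^2(\Pi^f-\Pi^c)^2] \le \mathbb E[g(\overline{Y}_T^{2^{\ell-1}})^2\,\mathbf{1}_A]$, where $A:=\{\mathbf{1}_f\ne\mathbf{1}_c\}$. On $A$, the barrier lies between the two scheme suprema, so $A\subset \{|\sup_t Y_t-\mathcal U|\le R_\ell\}$ with $R_\ell:=\sup_t|\overline{Y}_t^{2^\ell}-\overline{Y}_t^{2^{\ell-1}}|+\sup_t|\overline{Y}_t^{2^\ell}-Y_t|$ satisfying $\mathbb E[R_\ell^p]^{1/p}\le C h_\ell$ by Theorem~\ref{hypc}.

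The quantitative endgame uses the bounded density of $\sup_t Y_t$ near $\mathcal U$: for $\eta'\in(0,1/2)$, write $\mathbf{1}_A\le \mathbf{1}_{\{|\sup_t Y_t-\mathcal U|\le h_\ell^{1-\eta'}\}}+\mathbf{1}_{\{R_\ell>h_\ell^{1-\eta'}\}}$, invoke $\mathbb P(|\sup_t Y_t-\mathcal U|\le \epsilon)\le K\epsilon$ and Markov's inequality against $\mathbb E[R_\ell^p]\le Ch_\ell^p$, and finally apply Hölder with conjugate exponents $(r,r/(r-1))$ against the $2r$-th moment of $g(\overline{Y}_T^{2^{\ell-1}})$ (finite by \eqref{cond:payoff} and Corollary~\ref{cor0}). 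The \emph{main obstacle} is pushing this estimate past $h_\ell^1$ to reach $h_\ell^{1+\delta}$: a naive Hölder step would produce only $O(h_\ell^{(1-\eta')(1-1/r)})$, so achieving $\beta=1+\delta$ requires a sharper coupling between the bounded-density bound and the $L^p$-rate of $R_\ell$, together with a careful joint optimization of $\eta,\eta',r$ against the polynomial growth $\nu$ of $g$ and the integrability threshold $p$. The explicit condition $p>(14(1+\delta)^2+4(1+\delta)(0\vee\nu))/(\tfrac{1}{2}-\delta)$ is precisely what makes all these exponents simultaneously compatible.
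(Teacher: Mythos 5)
Your overall scaffolding — splitting into extreme/non-extreme events via Lemma~\ref{eq7}, using Cauchy--Schwarz and the fourth moment of $g$ on the extreme event, writing both estimators on the common fine mesh via \eqref{pcnum}, and invoking the bounded density of the running extremum — matches the paper's strategy. But the conditional-Jensen step is a genuine gap, and you essentially concede it at the end.

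Concretely: you bound $(\Pi^f-\Pi^c)^2 \le \mathbb E[\,|\mathbf 1_f-\mathbf 1_c|\mid\mathcal F]$ and then $\mathbb E\bigl[g^2(\Pi^f-\Pi^c)^2\bigr]\le \mathbb E\bigl[g^2\,\mathbf 1_A\bigr]$ with $A\subset\{|\sup_t Y_t-\mathcal U|\le R_\ell\}$, $\mathbb E[R_\ell^p]^{1/p}\lesssim h_\ell$. This chain replaces the \emph{squared} difference of two conditional probabilities by the probability of a single event. Even ignoring the H\"older loss from $g$, the best you can extract is $\mathbb P(A)\lesssim h_\ell^{1-\eta'}+h_\ell^{p\eta'}$, which for any $\eta'>0$ is strictly below $O(h_\ell)$, let alone $O(h_\ell^{1+\delta})$. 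No choice of $\eta,\eta',r,p$ repairs this, because the step $(\Pi^f-\Pi^c)^2\le\mathbb E[\mathbf 1_A\mid\mathcal F]$ has already discarded the fact that, on the near-barrier paths, $\Pi^f$ and $\Pi^c$ are not merely both between $0$ and $1$ but are \emph{pointwise close}. The missing input is exactly what the paper establishes on the non-extreme, near-barrier event $A_3$: a deterministic estimate $\bigl|\prod_{i}(1-\overline{q}_i^{2^\ell})-\prod_{i}(1-\overline{q}_i^{2^{\ell-1}})\bigr|=O\bigl(h_\ell^{1/2-2\eta(1+\varepsilon)}\bigr)$, obtained by comparing $\log\overline{q}_i^{2^\ell}$ with $\log\overline{q}_i^{2^{\ell-1}}$ (using Lipschitz continuity of $x\mapsto(x-\mathcal D)_+$, the non-extreme path bounds, and a convexity argument for the products). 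Squaring gives a factor $h_\ell^{1-4\eta(1+\varepsilon)}$ \emph{on top of} the bounded-density factor $\mathbb P(|\inf_t Y_t-\mathcal D|\le h_\ell^{1/2-\eta(1+\varepsilon)})=O(h_\ell^{1/2-\eta(1+\varepsilon)})$, which is what pushes the total to $O(h_\ell^{3/2-7\eta(1+\varepsilon)-2\eta(0\vee\nu)})$, comfortably beyond $h_\ell^{1+\delta}$ for small $\eta$. Note also that the paper's near-barrier window has width $h_\ell^{1/2-\eta(1+\varepsilon)}$, not $h_\ell^{1-\eta'}$; the shorter window is compensated by the product-difference bound, which your scheme never produces. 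To repair your argument you would need to keep $\Pi^f-\Pi^c$ as a genuine algebraic quantity on the non-extreme near-barrier set and bound it pointwise, rather than collapsing it into an indicator via Jensen.
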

\begin{rem}\label{rem:param}
$\bullet\;$ Combining the complexity theorem in \cite[Theorem 3.1]{giles2008} with the above result, we deduce that for any $\delta\in(0,\frac12)$ the MLMC estimators $\bar Q_{\mathcal D}$ and  $\bar P_{\mathcal U}$  reach the optimal time complexity $O(\varepsilon^{-2})$, for a given precision $\varepsilon >0$, and behave like an unbiased Monte Carlo estimator. \\
$\bullet\;$ Taking $\delta$ close to $\frac12$ achieves a smaller variance of the difference between the finer and coarse approximations which is of order  $O(h_{\ell}^{\beta})$ with $\beta$ close to $\frac32$ similar to the case  of diffusion with Lipschitz coefficients studied in \cite[Theorem 3.15]{GilDebRos}, but clearly leads to very restrictive conditions on the finiteness of the moments  of $(Y_t)_{t\in[0,T]}$ and $(\bar Y^n_t)_{t\in[0,T]}$. 
\end{rem}
\begin{proof} We only give a proof for the D-O barrier option since the proof for the U-O barrier option is quite similar. At first, following the extreme path approach given in \cite{GilHigMao,GilDebRos}, we write 
\begin{align*}
    \rm{Var}[\overline{Q}_{\ell}^f-\overline{Q}_{\ell}^c] &\leq \mathbb{E}[(\overline{Q}_{\ell}^f-\overline{Q}_{\ell}^c)^2]\\
    &= \mathbb{E}[(\overline{Q}_{\ell}^f-\overline{Q}_{\ell}^c)^2\mathbbm{1}_{A_1}]+\mathbb{E}[(\overline{Q}_{\ell}^f-\overline{Q}_{\ell}^c)^2\mathbbm{1}_{A_2}]+\mathbb{E}[(\overline{Q}_{\ell}^f-\overline{Q}_{\ell}^c)^2\mathbbm{1}_{A_3}]
\end{align*}
where we split the paths into the following three events.\\

\paragraph*{\bf First event $A_1$}We consider any of the extreme path events given in Lemma \ref{eq7} that satisfy \eqref{ext:1}-\eqref{ext:5}, with some {$ \eta > 0$} to be fixed later on. For $\gamma>0$, we use Hölder's inequality to get
\begin{align*}
 \mathbb{E}[(\overline{Q}_{\ell}^f-\overline{Q}_{\ell}^c)^2\mathbbm{1}_{A_1}]&\le  {\mathbb{E}}^{\frac{\gamma}{1+\gamma}}\Big[|\overline{Q}_{\ell}^f-\overline{Q}_{\ell}^c|^{\frac{2(1+\gamma)}{\gamma}}\Big]  {\mathbb{E}}^{\frac{1}{1+\gamma}}[\mathbbm{1}_{A_1} ]\\&\le 2^{\frac{2+\gamma}{1+\gamma}}\left({\mathbb{E}}^{\frac{\gamma}{1+\gamma}}[|\overline{Q}_{\ell}^f|^{\frac{2(1+\gamma)}{\gamma}}]+{\mathbb{E}}^{\frac{\gamma}{1+\gamma}}[|\overline{Q}_{\ell}^c|^{\frac{2(1+\gamma)}{\gamma}}]\right)\Big({\mathbb{P}}[{A_1} ]\Big)^{\frac{1}{1+\gamma}}.
\end{align*}
As the payoff function $g$ satisfies assumption \eqref{cond:payoff}, we deduce using  Corollary \ref{cor0}, that for  $h_{\ell}$  sufficiently small $\mathbb{E}[(\overline{Q}_{\ell}^f)^{\frac{2(1+\gamma)}{\gamma}}]$ and $\mathbb{E}[(\overline{Q}_{\ell}^c)^{\frac{2(1+\gamma)}{\gamma}}] $ are finite. By Lemma \ref{eq7}, we get that  
\begin{equation}\label{est:1}
\mathbb{E}[(\overline{Q}_{\ell}^f-\overline{Q}_{\ell}^c)^2\mathbbm{1}_{A_1}]=o(h_{\ell}^\frac{q}{1+\gamma})\mbox{ for all $q$ such that } 0<\frac q{\eta}\le p.
\end{equation}
\paragraph*{\bf Second event $A_2$}This event corresponds to the non-extreme paths satisfying $$|\inf_{t\in[0,T]}Y_t - {\mathcal D} | > {h_{\ell}}^{\frac{1}{2}-\eta(1+\varepsilon)}\mbox{ for }\eta \in(0,1/{2(1+\varepsilon)}) \mbox{ with } \varepsilon >0.$$
Let us assume that  $\inf_{t\in[0,T]}Y_t =Y_{\tau}$ with $\tau\in[t^{\ell}_i,t^{\ell}_{i+1}]$ for a given $i\in \{ 0,\dots, 2^{\ell}\}$. \\
$\bullet\,$ First case:  for $Y_{\tau} <{\mathcal D}- {h_{\ell}}^{\frac{1}{2}-\eta(1+\varepsilon)}$, 
we write  
\begin{align*}
\vert \overline{Y}_{t^{\ell}_i}^{2^{\ell}}-Y_{\tau}\vert &\leq \vert\overline{Y}_{t^{\ell}_i}^{2^{\ell}}-{Y}_{t^{\ell}_i} \vert +\vert {Y}_{t^{\ell}_i}-Y_{\tau}\vert\\
&\leq \vert\overline{Y}_{t^{\ell}_i}^{2^{\ell}}-{Y}_{t^{\ell}_i} \vert  + \int_{t^{\ell}_i}^{t^{\ell}_{i+1}}|L(Y_s)|ds + \gamma \sup_{t\in[t^{\ell}_{i},t^{\ell}_{i+1}]} |W_{t} -W_{t^{\ell}_i}|.
\end{align*}
Then, as we work on the non-extreme paths events  (see Lemma \ref{eq7} for the extreme paths events), for ${h_{\ell}}$ sufficiently small  we  have that  $\Big|\overline{Y}_{t^{\ell}_i}^{2^{\ell}}-Y_{\tau} \Big| =O( {h_{\ell}}^{\frac{1}{2}-\eta})$ and   then $\Big|\overline{Y}_{t^{\ell}_i}^{2^{\ell}}-Y_{\tau} \Big| < {h_{\ell}}^{\frac{1}{2}-\eta(1+\varepsilon)}$, which yields that $\overline{Y}_{t^{\ell}_i}^{2^{\ell}}<{\mathcal D}$.
Once again, as we are in the case of non  extreme paths we have $|\overline{Y}_{t^{\ell}_i}^{2^{\ell}}-\overline{Y}_{t^{\ell}_i}^{2^{\ell-1}}|< {h_{\ell}}^{1-\eta}$  and so $\Big|\overline{Y}_{t^{\ell}_i}^{2^{\ell-1}}-Y_{\tau} \Big| =O( {h_{\ell}}^{\frac{1}{2}-\eta})$. Hence, we also have $\overline{Y}_{t^{\ell}_i}^{2^{\ell-1}}<{\mathcal D}$, for sufficiently small $h_{\ell}$ which yields that $\overline{Q}_{\ell}^f-\overline{Q}_{\ell}^c=0$.\\
$\bullet\,$ Second case:  for  $Y_{\tau}> {\mathcal D}+{h_{\ell}}^{\frac{1}{2}-\eta(1+\varepsilon)}$, we proceed in the same way and we easily check that  for  ${h_{\ell}}$ sufficiently small $\prod_{i=0}^{2^{\ell}-1}(1-\overline{q}_{i}^{2^{\ell}})$ and $\prod_{i=0}^{2^{\ell}-1}(1-\overline{q}_{i}^{2^{\ell-1}})$ are both equal to $1+o({h_{\ell}}^a)$ for all  $a>0$. Consequently, as the payoff function $g$ satisfies condition \eqref{cond:payoff} and as we work with the non-extreme paths events,  we deduce that $\mathbb{E}[(\overline{Q}_{\ell}^f-\overline{Q}_{\ell}^c)^2\mathbbm{1}_{A_2}]=O({h_{\ell}}^{2(1-\eta) -2\eta\nu})$.     
 \paragraph*{\bf Third event $A_3$} This last event corresponds to the rest of the non extreme paths. At first, let us note that 
 \begin{align*}
\vert \overline{Y}_{t^{\ell}_{i+1}}^{2^{\ell}}- \overline{Y}_{t^{\ell}_{i}}^{2^{\ell}}\vert &\leq \vert\overline{Y}_{t^{\ell}_{i+1}}^{2^{\ell}}-{Y}_{t^{\ell}_{i+1}} \vert +\vert {Y}_{t^{\ell}_{i+1}}-Y_{t^{\ell}_{i}}\vert+ \vert\overline{Y}_{t^{\ell}_{i}}^{2^{\ell}}-{Y}_{t^{\ell}_{i}} \vert \\
&\leq \vert\overline{Y}_{t^{\ell}_i}^{2^{\ell}}-{Y}_{t^{\ell}_i} \vert + \vert\overline{Y}_{t^{\ell}_{i}}^{2^{\ell}}-{Y}_{t^{\ell}_{i}} \vert  + \int_{t^{\ell}_i}^{t^{\ell}_{i+1}}|L(Y_s)|ds + |\gamma| \sup_{t\in[t^{\ell}_{i},t^{\ell}_{i+1}]} |W_{t} -W_{t^{\ell}_i}|.
\end{align*}
Then, similarly as above we deduce   that  $\Big| \overline{Y}_{t^{\ell}_{i+1}}^{2^{\ell}}- \overline{Y}_{t^{\ell}_{i}}^{2^{\ell}} \Big| =O( {h_{\ell}}^{\frac{1}{2}-\eta})$ since we work on the non-extreme paths events. Thus,  it is clear that if any one of $\overline{Y}_{t^{\ell}_i}^{2^{\ell}},\overline{Y}_{t^{\ell}_{i+1}}^{2^{\ell}}, \overline{Y}_{t^{\ell}_i}^{2^{\ell-1}}, \overline{Y}_{t^{\ell}_{i+1}}^{2^{\ell-1}}$ is greater than ${\mathcal D}+{h_{\ell}}^{\frac{1}{2}-\eta(1+\varepsilon)}$, then the others will be greater than ${\mathcal D}+\frac{1}{2}{h_{\ell}}^{\frac{1}{2}-\eta(1+\varepsilon)}$, for ${h_{\ell}}$ sufficiently small. In this case, if $R$ denotes the set of indices for which none of $\overline{Y}_{t^{\ell}_i}^{2^{\ell}},\overline{Y}_{t^{\ell}_{i+1}}^{2^{\ell}}, \overline{Y}_{t^{\ell}_i}^{2^{\ell-1}}, \overline{Y}_{t^{\ell}_{i+1}}^{2^{\ell-1}}$ is greater than ${\mathcal D}+{h_{\ell}}^{\frac{1}{2}-\eta(1+\varepsilon)}$ and $R^c$ its complementary set,  then 
similarly as above we get  $\prod_{i\in R^c}(1-\overline{q}_{i}^{2^{\ell}})=1+o({h_{\ell}}^a)$ and 
$\prod_{i\in R^c}(1-\overline{q}_{i}^{2^{\ell-1}})=1+o({h_{\ell}}^a)$ for all $a>0$. Thus, we have 
\begin{equation}\label{split:R}
\prod_{i=0}^{2^{\ell}-1}(1-\overline{q}_{i}^{2^{\ell}})
=\prod_{i\in R}(1-\overline{q}_{i}^{2^{\ell}})+o({h_{\ell}}^a)
\mbox{ and }\prod_{i=0}^{2^{\ell}-1}(1-\overline{q}_{i}^{2^{\ell-1}})=\prod_{i\in R}(1-\overline{q}_{i}^{2^{\ell-1}})+o({h_{\ell}}^a), \mbox{ for all }a>0.
\end{equation}
Now, for $i\in R$  we have
\begin{align*}
\big \vert \log\overline{q}_{i}^{2^{\ell}} - \log\overline{q}_{i}^{2^{\ell-1}}  \big\vert &=\frac{2}{\gamma^2 {h_{\ell}}}\left \vert {(\overline{Y}_{t^{\ell}_i}^{2^{\ell}}-{\mathcal D})_+(\overline{Y}_{t^{\ell}_{i+1}}^{2^{\ell}}-{\mathcal D})_+}-{(\overline{Y}_{t^{\ell}_i}^{2^{\ell-1}}-{\mathcal D})_+(\overline{Y}_{t^{\ell}_{i+1}}^{2^{\ell-1}}-{\mathcal D})_+}\right\vert\\
 &\leq \frac{1}{\gamma^2 {h_{\ell}}} {\Big|(\overline{Y}_{t^{\ell}_i}^{2^{\ell}}-{\mathcal D})_+-(\overline{Y}_{t^{\ell}_i}^{2^{\ell-1}}-{\mathcal D})_+\Big|\Big|(\overline{Y}_{t^{\ell}_{i+1}}^{2^{\ell}}-{\mathcal D})_++(\overline{Y}_{t^{\ell}_{i+1}}^{2^{\ell-1}}-{\mathcal D})_+\Big|}\\
&+ \frac{1}{\gamma^2 {h_{\ell}}}{\Big|(\overline{Y}_{t^{\ell}_i}^{2^{\ell}}-{\mathcal D})_++(\overline{Y}_{t^{\ell}_i}^{2^{\ell-1}}-{\mathcal D})_+\Big|\Big|(\overline{Y}_{t^{\ell}_{i+1}}^{2^{\ell}}-{\mathcal D})_+-(\overline{Y}_{t^{\ell}_{i+1}}^{2^{\ell-1}}-{\mathcal D})_+\Big|},
\end{align*}
where we used the relation $f_1 g_1-f_2 g_2=\frac{1}{2}(f_1-f_2)(g_1+g_2) + 
\frac{1}{2}(f_1+f_2)(g_1-g_2)$.
By the Lipschitz property of  the map $x\mapsto(x-{\mathcal D})_+ $ and as $i\in R$, there exists a positive constant $C$ that may vary from line to line such that  \begin{align*}
\big \vert \log\overline{q}_{i}^{2^{\ell}} - \log\overline{q}_{i}^{2^{\ell-1}}  \big\vert  & \leq C{{h_{\ell}}^{-\frac{1}{2}-\eta(1+\varepsilon)}}\Big(|\overline{Y}_{t^{\ell}_i}^{2^{\ell}}-\overline{Y}_{t^{\ell}_i}^{2^{\ell-1}}| + |\overline{Y}_{t^{\ell}_{i+1}}^{2^{\ell}}- \overline{Y}_{t^{\ell}_{i+1}}^{2^{\ell-1}}|\Big)\\
& \leq C {h_{\ell}}^{\frac{1}{2}-\eta(2+\varepsilon)}.
\end{align*}
So, for $h_{\ell}$ sufficiently small we have $\big \vert \log\overline{q}_{i}^{2^{\ell}} - \log\overline{q}_{i}^{2^{\ell-1}} \big\vert  <{h_{\ell}}^{\frac{1}{2}-2\eta(1+\varepsilon)}$. Thus, we have
\begin{align*}
1-\overline{q}_{i}^{2^{\ell-1}}
&=(1-\overline{q}_{i}^{2^{\ell}})+\overline{q}_{i}^{2^{\ell}}\big(1-\exp( \log\overline{q}_{i}^{2^{\ell-1}} - \log\overline{q}_{i}^{2^{\ell}})\big)\\
&\le (1-\overline{q}_{i}^{2^{\ell}})+\overline{q}_{i}^{2^{\ell}}\big(1-\exp( -{h_{\ell}}^{\frac{1}{2}-2\eta(1+\varepsilon)})\big).
\end{align*}
Consequently, we get
\begin{align*}
\prod_{i\in R}(1-\overline{q}_{i}^{2^{\ell-1}})&\leq \prod_{i\in R} \Big( (1-\overline{q}_{i}^{2^{\ell}})+\overline{q}_{i}^{2^{\ell}}\big(1-\exp( -{h_{\ell}}^{\frac{1}{2}-2\eta(1+\varepsilon)})\big)\Big)\\
&\leq  \prod_{i\in R}(1-\overline{q}_{i}^{2^{\ell}})+\big(1-\exp( -{h_{\ell}}^{\frac{1}{2}-2\eta(1+\varepsilon)})\big),
\end{align*}
where we used the convexity property of the function $\phi:x\in[0,1]\mapsto \prod_{i\in R} \Big( (1-\overline{q}_{i}^{2^{\ell}})+\overline{q}_{i}^{2^{\ell}}x\Big)-\prod_{i\in R}(1-\overline{q}_{i}^{2^{\ell}})-x$ and  that $\phi(0)$ and $\phi(1)$ are both non-positive.
 In the same way, we get
$
\prod_{i\in R}(1-\overline{q}_{i}^{2^{\ell}})\le \prod_{i\in R}(1-\overline{q}_{i}^{2^{\ell-1}})+\big(1-\exp( -{h_{\ell}}^{\frac{1}{2}-2\eta(1+\varepsilon)})\big)
$
and  we deduce that
$$
\left | \prod_{i\in R}(1-\overline{q}_{i}^{2^{\ell}})-\prod_{i\in R}(1-\overline{q}_{i}^{2^{\ell-1}})\right|\le \big(1-\exp( -{h_{\ell}}^{\frac{1}{2}-2\eta(1+\varepsilon)})\big)\le {h_{\ell}}^{\frac{1}{2}-2\eta(1+\varepsilon)}.
$$
Then, by \eqref{split:R} we get 
$$
\left|\prod_{i=0}^{2^{\ell}-1}(1-\overline{q}_{i}^{2^{\ell}})-\prod_{i=0}^{2^{\ell}-1}(1-\overline{q}_{i}^{2^{\ell-1}})\right|=O( {h_{\ell}}^{\frac{1}{2}-2\eta(1+\varepsilon)}).
$$
Therefore, as we work on the non-extreme paths events, we deduce using condition \eqref{cond:payoff}  on the payoff function $g$ that 
\begin{align*}
|\overline{Q}_{\ell}^f-\overline{Q}_{\ell}^c|^2&=\left |g(\overline{Y}_{T}^{2^{\ell}})\prod_{i=0}^{2^{\ell}-1}(1-\overline{q}_{i}^{2^{\ell}}) 
- g(\overline{Y}_{T}^{2^{\ell-1}})\prod_{i=0}^{2^{\ell}-1}(1-\overline{q}_{i}^{2^{\ell-1}})\right|^2\\
&\le C|\overline{Y}_{T}^{2^{\ell}} - \overline{Y}_{T}^{2^{\ell-1}}|^2\left(1+|\overline{Y}_{T}^{2^{\ell}}|^{2\nu} + |\overline{Y}_{T}^{2^{\ell-1}}|^{2\nu}\right)+ {h_{\ell}}^{{1}-4\eta(1+\varepsilon)} |g(\overline{Y}_{T}^{2^{\ell}})|^2\\
&\le C\left({h_{\ell}}^{2(1-\eta) -2\eta\nu} +{h_{\ell}}^{1-4\eta(1+\varepsilon)-2\eta -2\eta \nu} \right)\\
&\le C {h_{\ell}}^{1-6\eta(1+\varepsilon)-2\eta \nu},
\end{align*}
where   $C$ is a positive constant that may varies from line to line.
Thus, 
\begin{align*}
\mathbb{E}[(\overline{Q}_{\ell}^f-\overline{Q}_{\ell}^c)^2\mathbbm{1}_{A_3}]&=O({h_{\ell}}^{{1}-6\eta(1+\varepsilon) -2\eta \nu}\times\mathbb P(|\inf_{t\in[0,T]}Y_t - {\mathcal D} | \le {h_{\ell}}^{\frac{1}{2}-\eta(1+\varepsilon)}))
\\&=O({h_{\ell}}^{\frac 32-7\eta(1+\varepsilon)-2\eta  \nu})
\end{align*}
since the random variable $\inf_{t\in[0,T]}Y_t $ has a bounded density on the neighbuorhood of $D$. To complete the proof, we choose  
$$
\eta=\frac{\frac12 -\delta}{7(1+\varepsilon)+2 \nu},
$$ which yields  $\mathbb{E}[(\overline{Q}_{\ell}^f-\overline{Q}_{\ell}^c)^2\mathbbm{1}_{A_3}]=O(h_{\ell}^{1+\delta})$.  Now concerning the second event noticing that 
$
\eta\le \frac{\frac12 -\delta}{7+2 \nu}
$
we easily see that $2-2\eta(1+2\nu)>1+\delta$, for $\delta\in(0,1/2)$, which yields $\mathbb{E}[(\overline{Q}_{\ell}^f-\overline{Q}_{\ell}^c)^2\mathbbm{1}_{A_2}]=o(h_{\ell}^{1+\delta})$. Finally, for the first event, we choose  $q=(1+\gamma)(1+\delta)$ to guarantee that   $\mathbb{E}[(\overline{Q}_{\ell}^f-\overline{Q}_{\ell}^c)^2\mathbbm{1}_{A_1}]=O(h_{\ell}^{1+\delta})$ which is satisfied as soon as   $p>\frac{(1+\delta)(1+\gamma)[7(1+\varepsilon)+2\nu]}{\frac 12 -\delta }$.
 \end{proof}
The following result considers a different assumption on the payoff function, covering an application for the local volatility CEV model.
\begin{thm}\label{MLMC:var-CEV} Let    $g$ denotes a bounded payoff function  satisfying~: $\exists C>0 \mbox{ s.t. }\forall x,y>0$, 
\begin{align}\label{cond:payoff-CEV}
  |g(x)-g(y)|\le C|x-y|(1+|x|^{-\nu} +|x|^{-\nu} ), \mbox{ for } \nu>0.
\end{align}
Moreover, assume that conditions  \eqref{ine3tilde}, \eqref{ine4} and \eqref{ine5} are satisfied for   $p>\frac{(1+\delta)(1+\gamma)5(1+\varepsilon)}{\frac 12 -\delta }$, with $\varepsilon,\gamma>0$, $\delta \in(0,1/2)$ and $0<L'_A<\frac 1{2h_{\ell}}$, with $h_{\ell}=2^{-\ell}T$  sufficiently small. If in addition 
 $\inf_{t \in [0, T]}Y_{t}$ has a bounded density in the neighborhood of the barrier, then the multilevel estimator $\bar Q_{\mathcal D}$ given by \eqref{MLMC:D}  for the D-O barrier option satisfies  
 $\rm{Var}(\overline{Q}_{\ell}^f-\overline{Q}_{\ell}^c)= O(h_{\ell}^{1 + \delta})$ .
 \end{thm}
\begin{proof}  We use a similar  decomposition as in the proof of Theorem \ref{MLMC:var}  
\begin{align*}
    \rm{Var}[\overline{Q}_{\ell}^f-\overline{Q}_{\ell}^c] &\leq \mathbb{E}[(\overline{Q}_{\ell}^f-\overline{Q}_{\ell}^c)^2]\\
    &= \mathbb{E}[(\overline{Q}_{\ell}^f-\overline{Q}_{\ell}^c)^2\mathbbm{1}_{A_1}]+\mathbb{E}[(\overline{Q}_{\ell}^f-\overline{Q}_{\ell}^c)^2\mathbbm{1}_{A_2}]+\mathbb{E}[(\overline{Q}_{\ell}^f-\overline{Q}_{\ell}^c)^2\mathbbm{1}_{A_3}]
\end{align*}
where we split the paths into the following three events.\\

\paragraph*{\bf First event $A_1$}We consider any of the extreme path events given in Lemma \ref{eq7} that satisfy \eqref{ext:1}-\eqref{ext:5}, with some {$ \eta > 0$} to be fixed later on. For $\gamma>0$, we use Hölder's inequality to get
\begin{align*}
 \mathbb{E}[(\overline{Q}_{\ell}^f-\overline{Q}_{\ell}^c)^2\mathbbm{1}_{A_1}]&\le  {\mathbb{E}}^{\frac{\gamma}{1+\gamma}}\Big[|\overline{Q}_{\ell}^f-\overline{Q}_{\ell}^c|^{\frac{2(1+\gamma)}{\gamma}}\Big]  {\mathbb{E}}^{\frac{1}{1+\gamma}}[\mathbbm{1}_{A_1} ]\\&\le 2^{\frac{2+\gamma}{1+\gamma}}\left({\mathbb{E}}^{\frac{\gamma}{1+\gamma}}[|\overline{Q}_{\ell}^f|^{\frac{2(1+\gamma)}{\gamma}}]+{\mathbb{E}}^{\frac{\gamma}{1+\gamma}}[|\overline{Q}_{\ell}^c|^{\frac{2(1+\gamma)}{\gamma}}]\right)\Big({\mathbb{P}}[{A_1} ]\Big)^{\frac{1}{1+\gamma}}.
\end{align*}
As the payoff function $f$ is bounded, we deduce by Lemma \ref{eq7} that for $h_{\ell}$ sufficiently small 
\begin{equation}\label{est:1}
\mathbb{E}[(\overline{Q}_{\ell}^f-\overline{Q}_{\ell}^c)^2\mathbbm{1}_{A_1}]=o(h_{\ell}^\frac{q}{1+\gamma})\mbox{ for all $q$ such that } 0<\frac q{\eta}\le p.
\end{equation}
\paragraph*{\bf Second event $A_2$}This event corresponds to the non-extreme paths satisfying $$|\inf_{t\in[0,T]}Y_t - {\mathcal D} | > {h_{\ell}}^{\frac{1}{2}-\eta(1+\varepsilon)}\mbox{ for }\eta \in(0,1/{2(1+\varepsilon)}) \mbox{ with } \varepsilon >0.$$
Let us assume that  $\inf_{t\in[0,T]}Y_t =Y_{\tau}$ with $\tau\in[t^{\ell}_i,t^{\ell}_{i+1}]$ for a given $i\in \{ 0,\dots, 2^{\ell}\}$.
Now, we write  
\begin{align*}
\vert \overline{Y}_{t^{\ell}_i}^{2^{\ell}}-Y_{\tau}\vert &\leq \vert\overline{Y}_{t^{\ell}_i}^{2^{\ell}}-{Y}_{t^{\ell}_i} \vert +\vert {Y}_{t^{\ell}_i}-Y_{\tau}\vert\\
&\leq \vert\overline{Y}_{t^{\ell}_i}^{2^{\ell}}-{Y}_{t^{\ell}_i} \vert  + \int_{t^{\ell}_i}^{t^{\ell}_{i+1}}|L(Y_s)|ds + \gamma \sup_{t\in[t^{\ell}_{i},t^{\ell}_{i+1}]} |W_{t} -W_{t^{\ell}_i}|.
\end{align*}
Then, as we work on the non-extreme paths events then for ${h_{\ell}}$ sufficiently small  we  have that  
$\Big|\overline{Y}_{t^{\ell}_i}^{2^{\ell}}-Y_{\tau} \Big| =O( {h_{\ell}}^{\frac{1}{2}-\eta})$ and therefore $\Big|\overline{Y}_{t^{\ell}_i}^{2^{\ell}}-Y_{\tau} \Big| < {h_{\ell}}^{\frac{1}{2}-\eta(1+\varepsilon)}$. Similarly, we deduce that  $\Big|\overline{Y}_{t^{\ell}_i}^{2^{\ell-1}}-Y_{\tau} \Big| < {h_{\ell}}^{\frac{1}{2}-\eta(1+\varepsilon)}$  since on the non-extreme paths events we have that   $|\overline{Y}_{t^{\ell}_i}^{2^{\ell}}-\overline{Y}_{t^{\ell}_i}^{2^{\ell-1}}|< {h_{\ell}}^{1-\eta}.$\\

$\bullet\,$ First case:  for $Y_{\tau} <{\mathcal D}- {h_{\ell}}^{\frac{1}{2}-\eta(1+\varepsilon)}$, 
the above estimate yields that $\overline{Y}_{t^{\ell}_i}^{2^{\ell}}<{\mathcal D}$.
and so $\Big|\overline{Y}_{t^{\ell}_i}^{2^{\ell-1}}-Y_{\tau} \Big| =O( {h_{\ell}}^{\frac{1}{2}-\eta})$. Hence, we also have $\overline{Y}_{t^{\ell}_i}^{2^{\ell-1}}<{\mathcal D}$, for sufficiently small $h_{\ell}$ which yields that $\overline{Q}_{\ell}^f-\overline{Q}_{\ell}^c=0$.\\
$\bullet\,$ Second case:  for  $Y_{\tau}> {\mathcal D}+{h_{\ell}}^{\frac{1}{2}-\eta(1+\varepsilon)}$, we 
have 
$$
{Y}_{t^{\ell}_i}> {\mathcal D}+{h_{\ell}}^{\frac{1}{2}-\eta(1+\varepsilon)}.
$$
So as $ |\overline{Y}_{t^{\ell}_i}^{2^{\ell}}-{Y}_{t^{\ell}_i}|< {h_{\ell}}^{1-\eta}$ then
$
\overline{Y}_{t^{\ell}_i}^{2^{\ell}}>{\mathcal D}+{h_{\ell}}^{\frac{1}{2}-\eta(1+\varepsilon)}.
$
Using similar arguments we also get  $
\overline{Y}_{t^{\ell}_i}^{2^{\ell-1}}>{\mathcal D}+{h_{\ell}}^{\frac{1}{2}-\eta(1+\varepsilon)}.
$
Then, we easily check that  for  ${h_{\ell}}$ sufficiently small $\prod_{i=0}^{2^{\ell}-1}(1-\overline{q}_{i}^{2^{\ell}})$ and $\prod_{i=0}^{2^{\ell}-1}(1-\overline{q}_{i}^{2^{\ell-1}})$ are both equal to $1+o({h_{\ell}}^a)$ for all  $a>0$. 
Besides, by Taylor formula we have  $|g(x)-g(y)|\le \frac{[f]_{\text{Lip}}}{\nu} |x-y|(|x|^{-\nu - 1}+|y|^{-\nu-1})$ for all  $x,y\in I=(0,+\infty)$.
Therefore, using that  $\overline{Y}_{t^{\ell}_i}^{2^{\ell}}>{\mathcal D}$ and $\overline{Y}_{t^{\ell}_i}^{2^{\ell-1}}>{\mathcal D}$ we  deduce that $\mathbb{E}[(\overline{Q}_{\ell}^f-\overline{Q}_{\ell}^c)^2\mathbbm{1}_{A_2}]=O({h_{\ell}}^{2(1-\eta) })$.     
 \paragraph*{\bf Third event $A_3$} For this last event, we proceed exactly as in Theorem \ref{MLMC:var}  to get 
$$
\left|\prod_{i=0}^{2^{\ell}-1}(1-\overline{q}_{i}^{2^{\ell}})-\prod_{i=0}^{2^{\ell}-1}(1-\overline{q}_{i}^{2^{\ell-1}})\right|=O( {h_{\ell}}^{\frac{1}{2}-2\eta(1+\varepsilon)}).
$$
Therefore, as we work on the non-extreme paths events, we deduce using condition \eqref{cond:payoff}  on the payoff function $g$ that 
\begin{align*}
|\overline{Q}_{\ell}^f-\overline{Q}_{\ell}^c|^2&=\left |g(\overline{Y}_{T}^{2^{\ell}})\prod_{i=0}^{2^{\ell}-1}(1-\overline{q}_{i}^{2^{\ell}}) 
- g(\overline{Y}_{T}^{2^{\ell-1}})\prod_{i=0}^{2^{\ell}-1}(1-\overline{q}_{i}^{2^{\ell-1}})\right|^2\\
&\le C|\overline{Y}_{T}^{2^{\ell}} - \overline{Y}_{T}^{2^{\ell-1}}|^2\left(1+|\overline{Y}_{T}^{2^{\ell}}|^{-2\nu} + |\overline{Y}_{T}^{2^{\ell-1}}|^{-2\nu}\right)+ {h_{\ell}}^{{1}-4\eta(1+\varepsilon)} |g(\overline{Y}_{T}^{2^{\ell}})|^2.
\end{align*}
where   $C$ is a positive constant that may varies from line to line.
The second term of the above upper bound is clearly $O({h_{\ell}}^{{1}-4\eta(1+\varepsilon)})$ since 
$g$ is bounded. For the first term, let us recall that we are in the case where
$ 
Y_{T}\ge Y_\tau\ge D- {h_{\ell}}^{\frac{1}{2}-\eta(1+\varepsilon)}
$
and since  on the non-extreme paths events we have that
$$
\overline{Y}_{T}^{2^{\ell}}-Y_T>-{h_{\ell}}^{1-\eta(1+\varepsilon)} \mbox{ and } 
\overline{Y}_{T}^{2^{\ell}-1}-Y_T>-{h_{\ell}}^{1-\eta(1+\varepsilon)} 
$$
then
$$
\overline{Y}_{T}^{2^{\ell}}>D- {h_{\ell}}^{\frac{1}{2}-\eta(1+\varepsilon)}-{h_{\ell}}^{1-\eta(1+\varepsilon)}>\frac D 2 \mbox{ for } {h_{\ell}} \mbox{ small enough}.
$$
By similar arguments we get that $\overline{Y}_{T}^{2^{\ell-1}}>\frac D2$  for $h_{\ell}$ small enough.
Then, since $\nu>0$, we get 
$$
|\overline{Y}_{T}^{2^{\ell}} - \overline{Y}_{T}^{2^{\ell-1}}|^2\left(1+|\overline{Y}_{T}^{2^{\ell}}|^{-2(\nu+1)} + |\overline{Y}_{T}^{2^{\ell-1}}|^{-2(\nu+1)}\right)=O({h_{\ell}}^{2-2\eta})
$$
Therefore, we have
\begin{align*}
\mathbb{E}[(\overline{Q}_{\ell}^f-\overline{Q}_{\ell}^c)^2\mathbbm{1}_{A_3}]&=O({h_{\ell}}^{{1}-4\eta(1+\varepsilon) }\times\mathbb P(|\inf_{t\in[0,T]}Y_t - {\mathcal D} | \le {h_{\ell}}^{\frac{1}{2}-\eta(1+\varepsilon)}))
\\&=O({h_{\ell}}^{\frac 32-5\eta(1+\varepsilon)})
\end{align*}
since the random variable $\inf_{t\in[0,T]}Y_t $ has a bounded density on the neighborhood of $D$. To complete the proof, we choose  
$$
\eta=\frac{\frac12 -\delta}{5(1+\varepsilon)},
$$ which yields  $\mathbb{E}[(\overline{Q}_{\ell}^f-\overline{Q}_{\ell}^c)^2\mathbbm{1}_{A_3}]=O(h_{\ell}^{1+\delta})$.  Now concerning the second event noticing that 
$
\eta\le \frac{\frac12 -\delta}{5}
$
we easily see that $2(1-\eta)>1+\delta$, for $\delta\in(0,1/2)$, which yields $\mathbb{E}[(\overline{Q}_{\ell}^f-\overline{Q}_{\ell}^c)^2\mathbbm{1}_{A_2}]=o(h_{\ell}^{1+\delta})$. Finally, for the first event, we choose  $q=(1+\gamma)(1+\delta)$ to guarantee that   $\mathbb{E}[(\overline{Q}_{\ell}^f-\overline{Q}_{\ell}^c)^2\mathbbm{1}_{A_1}]=O(h_{\ell}^{1+\delta})$ which is satisfied as soon as   $p>\frac{(1+\delta)(1+\gamma)5(1+\varepsilon)}{\frac 12 -\delta }$.
 \end{proof}

 \section{Application to the CIR process}\label{sec:CIR}
In this section, we consider the problem of pricing  D-O and  U-O barrier options 
$$\pi_{\mathcal D}=\mathbb{E}\Big[f(X_T)\mathbbm{1}_{\{ \inf_{t\in[0,T]}X_t>{\mathcal D}\}}\Big] \mbox { and } \pi_{\mathcal U}=\mathbb{E}\Big[f(X_T)\mathbbm{1}_{\{\sup_{t\in[0,T]}X_t<{\mathcal U}\}}\Big],$$
where $f$ is a Lipschitz payoff function with Lipschitz constant $[f]_{\text{Lip}}$ and $(X_t)_{0\le t\le T}$ denotes the Cox-Ingersoll-Ross (CIR) process solution to
\begin{equation}
\label{eq}
\left \{
\begin{array}{rcl}
dX_t&=&(a-\kappa X_t)dt+\sigma \sqrt{X_t}dW_t\\
X_0&=&x>0,\\
\end{array}
\right.\\
\end{equation}
 with $a \geq \sigma^2/2$, $\kappa \in\R$,  $\sigma > 0$,  $X_0=x>0$. It is well known that  
this SDE admits a unique strong positive solution. 
Applying the  Lamperti transformation, the process $(Y_t)_{0\le t\le T}$ given by  $Y_t=\sqrt{X_t}$ satisfies
 \begin{equation}
\label{lamperti-CIR}
\left \{
\begin{array}{rcl}
 dY_t&=&L(Y_t)dt+ \gamma dW_t\\
Y_0&=&\sqrt{x},\\
\end{array}
\right.\\
\end{equation}
where  $L(y)=\dfrac{a-\sigma^2/4}{2y}- \dfrac{\kappa}{2}y$ and $\gamma = \dfrac{\sigma}{2}.$ Thus, for $g :x \in\mathbb R \mapsto g(x)=f(x^2)$ we get 
$$\pi_{\mathcal D}=\mathbb{E}\Big[g(Y_T)\mathbbm{1}_{\{ \inf_{t\in[0,T]}Y_t>\sqrt{{\mathcal D}}\}}\Big] \mbox { and } \pi_{\mathcal U}=\mathbb{E}\Big[g(Y_T)\mathbbm{1}_{\{\sup_{t\in[0,T]}Y_t<\sqrt{{\mathcal U}}\}}\Big].$$
As $a-\sigma^2/4>0$, we easily check assumptions \eqref{ine2} and \eqref{ine5}. Besides, noticing that $\lim\limits_{y \to 0^+}L'(y)= \lim\limits_{y \to 0^+}-\frac{(a-\sigma^2/4)}{2y^2}-\frac{\kappa}{2}=-\infty$, we deduce that  $L$ is decreasing on $(0, \tilde\varepsilon)$ for $\tilde\varepsilon>0$ small enough.  It is also globally Lipschitz on $[\tilde\varepsilon, +\infty)$ so that assumption \eqref{ine4} is satisfied with $A=\tilde\varepsilon$ and $L'_{A}=\frac{\left| a-\sigma^2/4\right|}{2\varepsilon^2}+\frac{\kappa}{2}. $
 Now, to  check \eqref{ine3tilde} it is enough to show that 
 \begin{equation*}
 \sup_{t\in[0,T]}\mathbb{E}\big[|L'(Y_t)L(Y_t)|^p +|L''(Y_t)|^p+|L'( Y_t)|^{(2 \vee p)}+ |L(Y_t)|^p \big]< \infty
 \end{equation*}
 which is clearly satisfied as soon as 
 \begin{equation}\label{cond:mom}
 \sup_{t\in[0,T]}\mathbb{E}\big[Y_t^{-(4\vee 3p)}\big]=\sup_{t\in[0, T]}\mathbb{E}\big[ X_t^{-(2\vee \frac{3}{2}p)} \big] <\infty \;\mbox{ and }\;\sup_{t\in[0,T]}\mathbb{E}\big[Y_t\big]=\sup_{t\in[0,T]}\mathbb{E}\big[|X_t|^{\frac 12}\big]<\infty. 
 \end{equation}
Recalling that 
 $\sup_{t\in[0, T]}\mathbb{E}\big[ X_t^q\big] <\infty$ for all $q>-\frac{2a}{\sigma^2}$ (see e.g. \cite{dereich2012euler,BAK2}), 
 we easily conclude that condition \eqref{cond:mom} is satisfied when $\sigma^2 < a$ and $p< \frac{4}{3}\frac{a}{\sigma^2}$. Now, as $|g(x)-g(y)|\le[f]_{\text{Lip}} |x-y|(|x|+|y|)$ for all  $x,y\in I=(0,+\infty)$, then $g$ satisfies condition \eqref{cond:payoff} with $\nu=1$.  Consequently, for $\delta\in(0,1/2)$, if we choose    $\frac 43\frac{a}{\sigma^2}>p>\frac{(1+\delta)(1+\gamma)[7(1+\varepsilon)+2\nu]}{\frac 12 -\delta }>18$ for $\varepsilon$, $\delta$ close to zero and  $h_{\ell}$ sufficiently small s.t. $L'_A=\frac{\left| a-\sigma^2/4\right|}{2\varepsilon^2}+\frac{\kappa}{2}<\frac{1}{2h_{\ell}}$ then Theorem \ref{MLMC:var} is valid provided that $\inf_{t \in [0, T]}Y_{t}$  (resp.  $\sup_{t \in [0, T]}Y_{t}$) has a bounded density in the neighborhood of the barrier $\sqrt{{\mathcal D}}$ (resp. $\sqrt{{\mathcal U}}$). 
 More precisely, by the monotone property of $x\in\mathbb R_+^*\mapsto \sqrt{x}$ we have the relationship $\inf_{t\in[0,T]}Y_t= \sqrt{\inf_{t\in[0,T]}X_t}$  (resp. $\sup_{t\in[0,T]}Y_t= \sqrt{\sup_{t\in[0,T]}X_t}$ ), then it is sufficient to prove that 
$\inf_{t \in [0, T]}X_{t}$  (resp.  $\sup_{t \in [0, T]}X_{t}$) has a continuous density in the neighborhood of the barrier which is the aim of the following subsection. 
\subsection{Running maximum of the CIR process}
The aim of this subsection is  to prove that the maximum  of the CIR process \eqref{eq} admits a  continuous density. To do so, let us introduce firstly  the confluent hypergeometric function $_1F_1(x,b, y)$ defined for all $y$, $x \in \mathbb{C}$ and $b \in \mathbb{C}\setminus \{0, -1, -2,\cdots\}$ by
\begin{align}
\label{Kummer}
_1F_1(x,b,y)= \sum_{n=0}^{\infty} \frac{(x)_n}{(b)_n n!} y^n,
\end{align}
where $(x)_n=x(x+1)...(x+n-1)$ stands for the Pochhammer symbol.
\label{maximum density}
\begin{thm}\label{thm:CIR-max}
Let $(X_t)_{0\le t\le T}$ denote the CIR process solution to  \eqref{eq} with $\kappa >0$. Then  $\sup_{t\in[0,T]} X_t$ has a continuous density  on any compact set  $K\subset(X_0,+\infty)$, given by 
\begin{equation}
\label{CIR-Max}
z\in K\mapsto P_{{\rm CIR, Max}}(z)=\\\frac{1}{2\pi}\int_{-\infty}^{+\infty} \e^{(1+iu)T}  {\hat \phi}(u,z)du
\end{equation}
with 
$$
 {\hat \phi}(u,z)=-\dfrac{{_1F_1}((1+iu)/\kappa,2 a/\sigma^2,2\kappa X_0/\sigma^2){_1F_1}((1+iu)/\kappa+1,2 a/\sigma^2+1,2\kappa z/\sigma^2)}{a{_1F_1}((1+iu)/\kappa,2 a/\sigma^2,2\kappa z/\sigma^2)^2}.
$$
\end{thm}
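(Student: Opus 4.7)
The plan is to compute the density of $\sup_{t\in[0,T]}X_t$ by inverting the Laplace transform of the first passage time of the CIR process. Since CIR paths are continuous, for $z>X_0$ one has $\{\sup_{t\in[0,T]}X_t<z\}=\{\tau_z>T\}$ with $\tau_z:=\inf\{t\ge 0:X_t=z\}$. Hence $F(z):=\mathbb P(\sup_{t\in[0,T]}X_t\le z)=\mathbb P(\tau_z>T)$, and the target density is $P_{\rm CIR,Max}(z)=F'(z)$.

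The first ingredient I would use is the classical Laplace transform of $\tau_z$. Solving the Sturm--Liouville problem $\mathcal L u=\lambda u$ associated with the CIR generator $\mathcal L=(a-\kappa x)\partial_x+\tfrac{\sigma^2 x}{2}\partial_x^2$, with boundary data $u(z)=1$ and the natural regularity condition at $0$ (available because $a\ge \sigma^2/2$), yields a Kummer-type solution, giving
$$\psi(\lambda,z):=\mathbb E[e^{-\lambda \tau_z}]=\frac{{}_1F_1(\lambda/\kappa,\,2a/\sigma^2,\,2\kappa X_0/\sigma^2)}{{}_1F_1(\lambda/\kappa,\,2a/\sigma^2,\,2\kappa z/\sigma^2)},\qquad \mathrm{Re}(\lambda)>0.$$
Fubini's theorem then gives $\int_0^\infty e^{-\lambda t}\mathbb P(\tau_z>t)\,dt=(1-\psi(\lambda,z))/\lambda$, and Bromwich inversion along the abscissa $\mathrm{Re}(\lambda)=1$ produces
$$F(z)=\frac{1}{2\pi}\int_{\R}e^{(1+iu)T}\,\frac{1-\psi(1+iu,z)}{1+iu}\,du.$$
Differentiating in $z$ under the integral and using the Kummer derivative identity $\partial_y {}_1F_1(x,b,y)=(x/b)\,{}_1F_1(x+1,b+1,y)$, the constants combine as $(1/\kappa)\cdot(\sigma^2/(2a))\cdot(2\kappa/\sigma^2)=1/a$, which identifies $-\partial_z\psi(\lambda,z)/\lambda$ with the function $\hat\phi(u,z)$ written in the statement (up to an overall sign absorbed in the displayed minus).

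The main obstacle I anticipate is not the algebra above but the analytic justification: interchanging $\partial_z$ with the Bromwich integral, and obtaining continuity of $P_{\rm CIR,Max}$ on compact $K\subset(X_0,\infty)$, both require a uniform-in-$z\in K$ integrable domination of the integrand in $u$. This is nontrivial because the first parameter $(1+iu)/\kappa$ of the Kummer function tends to infinity along an imaginary direction, so the argument needs sharp uniform asymptotic expansions of ${}_1F_1(a,b,y)$ for large complex $a$ (cf.\ Temme, and the Digital Library of Mathematical Functions, Chapter 13) in order to (i) upper-bound $|{}_1F_1(\lambda/\kappa+1,\,2a/\sigma^2+1,\,2\kappa z/\sigma^2)|$, and more delicately (ii) lower-bound $|{}_1F_1(\lambda/\kappa,\,2a/\sigma^2,\,2\kappa z/\sigma^2)|$, uniformly for $z\in K$. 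Once these estimates are in place, dominated convergence yields absolute convergence of the Bromwich integral, legitimacy of differentiation under the integral sign, and continuity of $z\mapsto P_{\rm CIR,Max}(z)$ on $K$, completing the proof.
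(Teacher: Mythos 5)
Your approach matches the paper's essentially step for step: both start from the Laplace transform of the hitting time $\tau_z$, invert along the Bromwich contour $\mathrm{Re}(s)=1$ to express $\mathbb{P}[\sup_{s\le t}X_s>z]$ as $\frac{1}{2\pi}\int_{\mathbb R}\e^{(1+iu)t}\phi(u,z)\,du$, differentiate under the integral using the Kummer identity $\partial_y\,{}_1F_1(x,b,y)=(x/b)\,{}_1F_1(x+1,b+1,y)$, and then justify the interchange and the continuity of the density on compact $K\subset(X_0,\infty)$ by uniform large-parameter asymptotics of ${}_1F_1$. The obstacle you flag as the crux---a uniform-in-$z$ integrable domination of the integrand---is exactly what the paper's proof supplies, via Temme's asymptotic formulas (10.3.53) and (10.3.58) and Bessel-function asymptotics, which give $\bigl|\partial_z\phi(u,z)\bigr|\sim C(z)\,|u|^{-1/2}\exp\bigl(-\tfrac{2}{\sigma}(\sqrt{z}-\sqrt{X_0})\sqrt{|u|}\bigr)$ uniformly for $z\in K$.
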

\begin{proof}
At first, let us recall that for the CIR process, we have
\begin{align*}
\mathbb{P}\big[\sup_{0\leq s \leq t} X_s>z\big]&= \frac{1}{2\pi i}\int_{1-i\infty}^{1+i\infty}\frac{e^{t s}}{s}\frac{{_1F_1}(s/\kappa,2 a/\sigma^2,2\kappa X_0/\sigma^2)}{{_1F_1}(s/\kappa,2 a/\sigma^2,2\kappa z/\sigma^2)} ds,\\
&=\frac{1}{2\pi}\int_{-\infty}^{+\infty} \e^{(1+iu)t}\phi(u,z)du, \\
\mbox{ with }\phi(u,z)&:=\frac{{_1F_1}((1+iu)/\kappa,2 a/\sigma^2,2\kappa X_0/\sigma^2)}{(1+iu){_1F_1}((1+iu)/\kappa,2 a/\sigma^2,2\kappa z/\sigma^2)},
\end{align*}
where we recall that for $b,y>0$, the $x$-zeros of $_1F_1(x,b,y)$ are negative real and simple (see e.g. \cite{linetsky2004computing} and \cite{gerhold2020running}). Thereafter,
by  formula (13.4.8) in \cite{gautsch1964handbook} the derivative of the Kummer confluent hypergeometric function is given by 
\begin{equation}\label{eq:der-F}
\frac{\partial _1F_1(a,b,z)}{\partial z}=\frac{a}{b}  {_1F_1(a+1,b+1,z)},
\end{equation}
which gives that
 $$\frac{\partial \phi}{\partial z}(u,z)=-\dfrac{{_1F_1}((1+iu)/\kappa,2 a/\sigma^2,2\kappa X_0/\sigma^2){_1F_1}((1+iu)/\kappa+1,2 a/\sigma^2+1,2\kappa z/\sigma^2)}{a{_1F_1}((1+iu)/\kappa,2 a/\sigma^2,2\kappa z/\sigma^2)^2}.$$  
 On the one hand,  by formula (10.3.53) of \cite{zbMATH06350954} we have for $x,b,y\in \mathbb C$ 
\begin{align*}
{_1F_1}(x,b,y)\sim \left(\frac{y}x\right)^{\frac{1-b}{2}}\frac{\Gamma(b)\Gamma(x-b+1)}{\Gamma(x)}
e^{\frac y2}I_{b-1}(2\sqrt{xy}),  \;\text{as } x\to+\infty
\end{align*}
which is valid inside the sector $|\arg(x)|<\pi$ and uniformly in bounded $b$ and $y$-domains, where $I_{\nu}$ stands for the modified Bessel functions of  the first kind. By formula (9.3.14), we have 
$
I_{\nu}(x)\sim\frac{\e^x}{\sqrt{2\pi x}}, \;\text{as }x\to \infty
$
uniformly in the sector $|\arg(x)|<\frac{\pi}2$, we then deduce that
\begin{align*}
{_1F_1}((1+iu)/\kappa+j, 2a/\sigma^2+j , 2\kappa v/\sigma^2 )&\sim 
 \frac{\e^{\frac{\kappa v}{\sigma^2}}}{\sqrt{4\pi}} \left(\frac{\frac{2\kappa v}{\sigma^2}}{\frac{iu+1}{\kappa}+j}\right)^{\frac 12 -\frac j2-\frac a{\sigma^2}}
\frac{\Gamma(\frac{2a}{\sigma^2}+j) \Gamma(\frac{iu+1}{\kappa}-\frac{2a}{\sigma^2}+1)}{\Gamma(\frac{iu+1}{\kappa}+j)}\notag\\
&\times \left(\frac {2\kappa v}{\sigma^2 }\left( \frac{iu+1}{\kappa}+j\right)\right)^{-\frac 14}
\exp\left(2\sqrt{\frac{2\kappa v}{\sigma^2}(\frac{1+iu}{\kappa}+j)}\right)
\end{align*}
as $u\to\infty$ uniformly on $v$-bounded domain. Using that
$
{\Gamma(x+a)}/{\Gamma(x+b)}\sim x^{a-b}$ as $x\to \infty$ uniformly inside the sector $|\arg(x)|<\pi$,
(see e.g. (6.5.72) of \cite{zbMATH06350954}), and that $$
 \exp\left(2\sqrt{\frac{2\kappa v}{\sigma^2}(\frac{1+iu}{\kappa}+j)}\right)\sim 
 \exp\left(\frac{2\sqrt{v}}{\sigma}(1+i)\sqrt{u}\right) 
$$
as $ u\to +\infty$ uniformly in bounded $v$-domain, we get
\begin{align}
\label{kummer}
{_1F_1}((1+iu)/\kappa+j, 2a/\sigma^2+j , 2\kappa v/\sigma^2 )&\sim
\frac{\e^{\frac{\kappa v}{\sigma^2}}}{2\sqrt{\pi}} \left({\frac{2\kappa v}{\sigma^2}}\right)^{\frac 14 -\frac j2-\frac a{\sigma^2}}
\Gamma(\frac{2a}{\sigma^2}+j) \left(\frac{iu+1}{\kappa}+j\right)^{\frac 14-\frac j2-\frac{a}{\sigma^2}}\notag\\
&\times
e^{2(1+i)\sqrt{\frac{v}{\sigma^2}u}}
\end{align}
as $u\rightarrow \infty$ uniformly in bounded $v$-domain.
Thus, we deduce that 
\begin{align*}
\frac{\partial \phi}{\partial z}(u,z) \sim&-\frac{\sqrt{2}}{\sigma} \Big(X_0\Big)^{-\frac{a}{\sigma^2}+\frac{1}{4}} \Big({z}\Big)^{\frac{a}{\sigma^2}-\frac{3}{4}}\big(1+iu\big)^{\frac{a}{\sigma^2}-\frac{1}{4}}\big((1+\kappa)+iu \big)^{-\frac{a}{\sigma^2}-\frac{1}{4}}\\ &\exp\Big((\kappa/\sigma^2)(X_0-z)\Big)\exp\left[\frac{2}{\sigma}\sqrt{u}(1+i)(\sqrt{X_0}-\sqrt{z})\right],
\end{align*}
and therefore 
\begin{align*}
\left|\frac{\partial \phi}{\partial z}(u,z)\right|
&\sim \frac{\sqrt{2}}{\sigma} \Big({X_0}\Big)^{-\frac{a}{\sigma^2}+\frac{1}{4}} \Big({z}\Big)^{\frac{a}{\sigma^2}-\frac{3}{4}} \exp\Big[(\kappa/\sigma^2)(X_0-z)\Big]  u^{-\frac{1}{2}}
\exp\left[\frac{2}{\sigma}\sqrt{u}(\sqrt{X_0}-\sqrt{z})\right]
\end{align*}
as $u\rightarrow \infty$, uniformly in bounded $z$-domain. Hence,  $
\int_{1}^{+\infty} \left|\e^{(1+iu)t}\frac{\partial \phi}{\partial z}(u,z)\right| du <\infty 
$ is uniformly convergent in bounded $z$-domain too.
On the other hand, for the integral from $-\infty$ to $-1$, by (10.3.58) of \cite{zbMATH06350954} we have for $x,b,y\in \mathbb C$ 
\begin{align*}
{_1F_1}(-x,b,y)\sim \left(\frac{y}x\right)^{\frac{1-b}{2}}\frac{\Gamma(b)\Gamma(x+1)}{\Gamma(x+b)}
e^{\frac y2}J_{b-1}(2\sqrt{xy}),  \;\text{as } x\to+\infty
\end{align*}
which is valid inside the sector $|\arg(x)|<\pi$ and uniformly in bounded $b$ and $y$-domains, where $J_{\nu}$ stands for the Bessel functions of  the first kind. By (9.2.1)  of \cite{AbrSte}, as $|x|\to\infty$   we have that
\begin{align*}
J_{\nu}(x)&=\sqrt{\frac2{\pi x}}\left(\cos\left(x-\frac{\nu \pi}2-\frac{\pi}4\right)+\e^{|\Im(x)|}\text{O}\left(|x|^{-1}\right)\right), \;|\arg(x)|<\pi.
\end{align*}
Combining  this with the following standard asymptotic expansions  valid for any  $\alpha \in \mathbb R$, $\beta>0$ and $u\to +\infty$
$\cos(\alpha+i \beta u)=\frac 12\e^{-i\alpha+\beta u}+o(e^{\beta |u|})$, we get 
\begin{align*}
{_1F_1}((1-iu)/\kappa+j, 2a/\sigma^2+j , 2\kappa v/\sigma^2 )&\sim \frac{\e^{\frac{\kappa v}{\sigma^2}}}{\sqrt{\pi}} \left(\frac{\frac{2\kappa v}{\sigma^2}}{\frac{iu-1}{\kappa}-j}\right)^{\frac 12 -\frac j2-\frac a{\sigma^2}}
\frac{\Gamma(\frac{2a}{\sigma^2}+j) \Gamma(\frac{iu-1}{\kappa}-j+1)}{\Gamma(\frac{iu-1}{\kappa}+\frac{2a}{\sigma^2})}\\
&\hspace{-3cm}\times \left(\frac {2\kappa v}{\sigma^2 }\left( \frac{iu-1}{\kappa}-j\right)\right)^{-\frac 14}
\cos\left(2\sqrt{\frac {2\kappa v}{\sigma^2 }\left(\frac{iu-1}{\kappa}-j\right)}+\frac{\pi}2(\frac{2a}{\sigma^2}+j-1)+\frac{\pi}4 \right).
\end{align*}
Using that 
$$
\cos\left( 2\sqrt{\frac {2\kappa v}{\sigma^2 }\left(\frac{iu-1}{\kappa}-j\right)}+\frac{\pi}2(\frac{2a}{\sigma^2}+j-1)+\frac{\pi}4  \right)
\\\sim \frac 12 e^{-i\pi \left(\frac{a}{\sigma^2}+\frac{j}2+\frac 14\right)}
e^{2(1-i)\sqrt{\frac{v}{\sigma^2}u}},
$$
as $u\to\infty$ uniformly on $v$-bounded domain and that
$
{\Gamma(z+a)}/{\Gamma(z+b)}\sim z^{a-b}$ as $z\to \infty$ uniformly inside the sector $|\arg(z)|<\pi$,
(see e.g. (6.5.72) of \cite{zbMATH06350954}), we get 
\begin{align}\label{eq:F-inf}
{_1F_1}((1-iu)/\kappa+j, 2a/\sigma^2+j , 2\kappa v/\sigma^2 )&\sim \frac{\e^{\frac{\kappa v}{\sigma^2}}}{2\sqrt{\pi}} \left(\frac{\frac{2\kappa v}{\sigma^2}}{\frac{iu-1}{\kappa}-j}\right)^{\frac 12 -\frac j2-\frac a{\sigma^2}}
\Gamma(\frac{2a}{\sigma^2}+j) \left(\frac{iu-1}{\kappa}-j\right)^{1-j-\frac{2a}{\sigma^2}}\notag\\
&\times \left(\frac {2\kappa v}{\sigma^2 }\left( \frac{iu-1}{\kappa}-j\right)\right)^{-\frac 14}
e^{-i\pi \left(\frac{a}{\sigma^2}+\frac{j}2+\frac 14\right)}
e^{2(1-i)\sqrt{\frac{v}{\sigma^2}u}}\notag\\
&
\sim \frac{\e^{\frac{\kappa v}{\sigma^2}}}{2\sqrt{\pi}} \left({\frac{2\kappa v}{\sigma^2}}\right)^{\frac 14 -\frac j2-\frac a{\sigma^2}}
\Gamma(\frac{2a}{\sigma^2}+j) \left(\frac{iu-1}{\kappa}-j\right)^{\frac 14-\frac j2-\frac{a}{\sigma^2}}\notag\\
&\times
e^{-i\pi \left(\frac{a}{\sigma^2}+\frac{j}2+\frac 14\right)}
e^{2(1-i)\sqrt{\frac{v}{\sigma^2}u}}
\end{align}
and therefore 
\begin{align*}
\left|\frac{\partial \phi}{\partial z}(-u,z)\right|
\sim \frac{\sqrt{2}}{\sigma}(X_0)^{\frac 14 -\frac{a}{\sigma^2}}(z)^{\frac{a}{\sigma^2}-\frac 34}\exp\Big(\frac{ \kappa}{\sigma^2}(X_0-z)\Big)
u^{-\frac{1}2}\exp\Big(\frac{2}{\sigma}\sqrt{u}(\sqrt{X_0}- \sqrt{z})\Big)
\end{align*}
as $u\rightarrow \infty$, uniformly in bounded $z$-domain.
Hence,  we deduce that 
$\int_{-\infty}^{-1} \left|\e^{(1+iu)t}\frac{\partial \phi}{\partial z}(u,z)\right| du <\infty $  uniformly in bounded $z$-domain. Finally,  we complete the proof by noticing that  $(u,z)\in \mathbb R\times K \mapsto \e^{(1+iu)t}\frac{\partial \phi}{\partial z}(u,z)$ is a continuous function  for any compact set  $ K\subset (X_0,+\infty)$. (see e.g. \cite[Theorem B.3]{PinZaf})
\end{proof}
\subsection{Running minimum of the CIR process}
 In the current subsection, we focus on studying the density of the running minimum of the  CIR process \eqref{eq}. For this aim, we introduce the Tricomi confluent hypergeometric function $U(a,b,z)$ defined for all $a$, $z\in \mathbb{C}$ and $b \in \mathbb{C} \setminus\{\pm 0, \pm{1}, \pm{2},...\}$ by 
\begin{equation}\label{def:U}
U(a,b,z)=\frac{\Gamma(1-b)}{\Gamma(1+a-b)}{_1F_1}(a,b,z)+\frac{\Gamma(b-1)}{\Gamma(a)}z^{1-b}{_1F_1}(1+a-b,2-b,z).
\end{equation}
Let us denote by 
$\tau_{X_0 \downarrow z}:= \inf\{t\geq 0 : X_t=z\}$ the first time that the CIR process
$(X_t)_{t\ge0}$ starting at $X_0$ hits the level $z$ satisfying $0 <z<X_0$.
By \cite[Theorem 3]{zbMATH05047805}, the Laplace Transform of  $\tau_{X_0 \downarrow z}$ is explicit and given by 
\begin{equation}
\label{eq:Lap-min-CIR}
\bold{E}[\e^{-s\tau_{X_0 \downarrow z}}]=\frac{U(s/\kappa,2a/\sigma^2,2\kappa X_0/\sigma^2)}{U(s/\kappa,2a/\sigma^2,2\kappa z/\sigma^2)}, \mbox{ for } s>0.
\end{equation}
\begin{thm}\label{thm:CIR-min}
Let $(X_t)_{0\le t\le T}$ denote the CIR process solution to  \eqref{eq} with $\kappa>0$.  Then  $\inf_{t\in[0,T]} X_t$ has a continuous density  on any compact set  $K\subset(0,X_0)$, given by 
\begin{equation}
\label{CIR-Min}
z\in K\mapsto P_{{\rm CIR, Min}}(z)=\frac{1}{2\pi}\int_{-\infty}^{+\infty} \e^{(1+iu)T} {\hat \psi}(u,z)du
\end{equation}
with 
$$
{\hat \psi}(u,z)=\dfrac{2 U((1+iu)/\kappa,2 a/\sigma^2,2\kappa X_0/\sigma^2)U((1+iu)/\kappa+1,2 a/\sigma^2+1,2\kappa z/\sigma^2)}{\sigma^2 U((1+iu)/\kappa,2 a/\sigma^2,2\kappa z/\sigma^2)^2}.$$
\end{thm}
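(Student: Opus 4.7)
The plan is to mirror the argument used for Theorem \ref{thm:CIR-max}, starting this time from the explicit Laplace transform of the first hitting time $\tau_{X_0\downarrow z}$ given in \eqref{eq:Lap-min-CIR}. First, observing that $\{\inf_{t\in[0,T]}X_t<z\}=\{\tau_{X_0\downarrow z}\le T\}$ for $z<X_0$, I would apply Bromwich inversion along the vertical contour $\Re(s)=1$ to $s^{-1}\mathbb{E}[e^{-s\tau_{X_0\downarrow z}}]$ to obtain
\begin{equation*}
\mathbb{P}\Big[\inf_{t\in[0,T]}X_t<z\Big]=\frac{1}{2\pi}\int_{-\infty}^{+\infty}e^{(1+iu)T}\psi(u,z)\,du,\quad \psi(u,z):=\frac{U((1+iu)/\kappa,2a/\sigma^2,2\kappa X_0/\sigma^2)}{(1+iu)\,U((1+iu)/\kappa,2a/\sigma^2,2\kappa z/\sigma^2)}.
\end{equation*}
The contour shift is legitimate because the $s$-zeros of $U(s/\kappa,2a/\sigma^2,2\kappa z/\sigma^2)$ lie on the negative real axis, which is the Tricomi-side analogue of the Kummer fact invoked in the proof of Theorem \ref{thm:CIR-max}.

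Second, I would differentiate $\psi$ in $z$ using the standard identity $\partial_z U(a,b,z)=-aU(a+1,b+1,z)$. Writing $\alpha=(1+iu)/\kappa$, the chain-rule factor $\kappa\alpha=1+iu$ exactly cancels the prefactor $(1+iu)^{-1}$ in $\psi$, and a short computation gives $\partial_z\psi(u,z)=\hat\psi(u,z)$. Provided the differentiation under the integral is legitimate, the density $P_{\rm CIR,Min}$ is then the $z$-derivative of the CDF above, yielding the claimed formula.

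The bulk of the work is the uniform-in-$z$ decay estimate for $\hat\psi(u,z)$ as $|u|\to\infty$, for $z$ in a compact set $K\subset(0,X_0)$. My plan is a saddle-point analysis of the integral representation $U(\alpha,b,w)=\Gamma(\alpha)^{-1}\int_0^{\infty}e^{-wt}t^{\alpha-1}(1+t)^{b-\alpha-1}\,dt$, which is valid since $\Re(\alpha)=1/\kappa>0$. The saddle $t_*\sim\sqrt{\alpha/w}$ produces the sub-dominant asymptotic
\begin{equation*}
\bigl|U((1+iu)/\kappa,2a/\sigma^2,2\kappa v/\sigma^2)\bigr|\sim C(v)\,|u|^{\rho}\exp\!\bigl(-2\sqrt{vu}/\sigma\bigr),\qquad |u|\to\infty,
\end{equation*}
uniformly on compact $v$-sets, for some explicit $\rho\in\mathbb{R}$. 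Plugging this into $\hat\psi$, the $\Gamma(\alpha)^{-1}$ prefactors in the three occurrences of $U$ cancel and the exponents combine to give
\begin{equation*}
|\hat\psi(u,z)|\le C(K)\,|u|^{\tilde\rho}\exp\!\bigl(-2(\sqrt{X_0}-\sqrt{z})\sqrt{u}/\sigma\bigr),
\end{equation*}
which is integrable in $u$ uniformly in $z\in K$ since $\sqrt{X_0}-\sqrt{z}$ is bounded below by a positive constant on $K$. Dominated convergence then legitimises the differentiation under the integral, and combined with continuity of the integrand in $(u,z)$ on $\mathbb{R}\times K$ it yields continuity of $z\mapsto P_{\rm CIR,Min}(z)$ on $K$ (as in the proof of Theorem \ref{thm:CIR-max} via \cite[Theorem B.3]{PinZaf}).

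The main obstacle is the asymptotic for $U$ itself. Unlike the Kummer function $_1F_1$ that appeared in Theorem \ref{thm:CIR-max} and grew exponentially in $|u|$, the Tricomi $U$ is the sub-dominant solution of Kummer's equation at infinity. Consequently, a direct substitution in \eqref{def:U} of the $_1F_1$ asymptotics already derived in the proof of Theorem \ref{thm:CIR-max} does not give decay — the two terms in \eqref{def:U} both grow in $u$, and the required decay emerges only from a subtle cancellation governed by Stirling's formula applied to the ratios $\Gamma(1-b)/\Gamma(1+\alpha-b)$ and $\Gamma(b-1)/\Gamma(\alpha)$. Either carrying out this cancellation by hand or equivalently performing the saddle-point expansion above with uniform control on compact $v$-sets is the essential new technical step compared with Theorem \ref{thm:CIR-max}.
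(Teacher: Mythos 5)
Your blueprint matches the paper's strategy at every structural step: Bromwich inversion of \eqref{eq:Lap-min-CIR} along $\Re(s)=1$, differentiation under the integral via $\partial_z U(a,b,z)=-aU(a+1,b+1,z)$, and then a uniform-in-$z$ decay estimate for $\hat\psi(u,z)$ as $|u|\to\infty$ to justify both the differentiation and the continuity claim via the cited Fourier-inversion theorem. Where you diverge is the middle technical step. The paper does not perform a saddle-point analysis of the integral representation for $U$, nor does it chase the Stirling cancellation inside \eqref{def:U} (your worry on this point is correct but moot — the authors never attempt that route). Instead they simply quote Temme's ready-made asymptotic
$U(x,b,y)\sim \sqrt{\pi}\,\Gamma(x)^{-1}x^{-3/4+b/2}y^{1/4-b/2}e^{y/2-2\sqrt{xy}}$
valid for $x\to\infty$ with $|\arg x|<\pi$, and, separately, a Bessel-type formula for $U(-x,b,y)$ to handle the half-line $u\to-\infty$ (where the effective argument has $\arg$ near $-\pi/2$ and the oscillatory cosine/sine combination has to be recombined into a decaying exponential). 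Your saddle-point computation correctly locates $t_*\sim\sqrt{\alpha/w}$ and reproduces the $e^{-2\sqrt{\alpha w}}$ factor, so it would give the same modulus estimate; the trade-off is that your route gives a self-contained derivation while the paper's is shorter because it outsources the asymptotics. One genuine gap in your write-up: you present a single estimate $|\hat\psi(u,z)|\lesssim |u|^{\tilde\rho}\exp(-2(\sqrt{X_0}-\sqrt z)\sqrt u/\sigma)$ without addressing $u\to-\infty$ separately. The paper devotes a full page to the $u<0$ regime precisely because the argument $(1-iu)/\kappa$ leaves the half-plane where (10.3.37) is easy to apply; your saddle-point method would need uniform control of the steepest-descent contour as $\arg\alpha\to\pm\pi/2$ (where $t_*$ rotates towards $e^{\pm i\pi/4}$), which is not trivial and deserves explicit treatment. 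Finally, a small bookkeeping note: the paper's careful tracking of the $\Gamma$-prefactors (they contribute $\Gamma(\alpha)/\Gamma(\alpha+1)=\alpha^{-1}$, combining with the algebraic powers to give precisely $u^{-1/2}$) should replace your unspecified $|u|^{\tilde\rho}$; the $u^{-1/2}$ is consistent with integrability but it is the actual rate one gets.
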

\begin{proof}
Making use of an inverse Laplace transform, the cumulative distribution function of the running minimum  CIR process can be expressed as 
\begin{align*}
    \mathbb{P}\left[ \inf_{0\leq s \leq t} X_s \leq z\right]&= \mathbb{P}\left[ \tau_{X_0 \downarrow z} \leq t \right]\\
    &= \frac{1}{2\pi i}\int_{1-i\infty}^{1+i\infty}\frac{e^{t s}}{s}\frac{U(s/\kappa,2a/\sigma^2,2\kappa X_0/\sigma^2)}{U(s/\kappa,2a/\sigma^2,2\kappa z/\sigma^2)} ds\\
    &=\frac{1}{2\pi}\int_{-\infty}^{+\infty} \e^{(1+iu)t} \psi(u,z)du,\\
    \mbox{ with } \psi(u,z)&:= \frac{U((1+iu)/\kappa,2 a/\sigma^2,2\kappa X_0/\sigma^2)}{(1+iu)U((1+iu)/\kappa,2 a/\sigma^2,2\kappa z/\sigma^2)}
\end{align*}
and where we recall that for $b,y>0$, the $x$-zeros of $U(x,b,y)$ are negative real and simple (see e.g. \cite{linetsky2004computing}). 
By formula (13.4.21) in \cite{gautsch1964handbook} the derivative of the Tricomi confluent hypergeometric function is given by
\begin{equation}\label{Kummer-deriv}
\frac{\partial U(a,b,z)}{\partial z}=-aU(a+1,b+1,z)
\end{equation}
which gives that
 $$\frac{\partial \psi(u,z)}{\partial z}=\dfrac{2 U((1+iu)/\kappa,2 a/\sigma^2,2\kappa X_0/\sigma^2)U((1+iu)/\kappa+1,2 a/\sigma^2+1,2\kappa z/\sigma^2)}{\sigma^2 U((1+iu)/\kappa,2 a/\sigma^2,2\kappa z/\sigma^2)^2}.$$
Now using formulas (10.3.37), (10.3.31) and (9.1.3) in \cite{zbMATH06350954}, we get for $x,b$ and $y\in \mathbb C$
\begin{equation}\label{asymp:U+}
U(x, b, y) \sim \frac{\sqrt{\pi}}{\Gamma(x)}x^{-\frac{3}{4}+\frac{1}{2}b} y^{\frac{1}{4}-\frac{1}{2}b} \e^{\frac y{2}-2\sqrt{xy}}, \; \text{ as } x\to+\infty,
\end{equation}
inside the sector $|\arg(x)|<\pi$ and uniformly in bounded $b$ and $y$-domains.
For $j\in\{0,1\}$ and $v\in\{X_0,z\}$, we get 
\begin{align}
\label{tric}
U(&(1+iu)/\kappa+j, 2a/\sigma^2+j , 2\kappa v/\sigma^2 )\sim \notag\\&\frac{\sqrt{\pi}}{\Gamma\left(\frac{1+iu}{\kappa}+j\right)} \exp\left(\frac{\kappa v}{\sigma^2}-2\sqrt{\frac{2\kappa v}{\sigma^2}(\frac{1+iu}{\kappa}+j)}\right)\left(\frac{2\kappa v}{\sigma^2}\right)^{\frac 1 4 -\frac{a}{\sigma^2}-\frac{j}2}\left(\frac{1+iu}{\kappa}+j\right)^{-\frac{3}{4}+\frac{a}{\sigma^2}+\frac j 2}
\end{align}
as $u\rightarrow \infty$ uniformly in bounded $v$-domain.  Besides, we easily check that
\begin{equation}\label{eq:expo}
 \exp\left(\frac{\kappa v}{\sigma^2}-2\sqrt{\frac{2\kappa v}{\sigma^2}(\frac{1+iu}{\kappa}+j)}\right)\sim 
 \exp\left(\frac{\kappa v}{\sigma^2}-\frac{2\sqrt{v}}{\sigma}(1+i)\sqrt{u}\right), \mbox{ as } u\to +\infty,
\end{equation}
also uniformly in bounded $v$-domain. We deduce that 
\begin{align*}
   \frac{\partial \psi(u,z)}{\partial z}&\sim  \frac{2}{\sigma^2}    \left(\frac{1+iu}{\kappa} \right)^{-\frac{1}{4}-\frac{a}{\sigma^2}} \left(\frac{2\kappa X_0}{\sigma^2}\right)^{\frac{1}{4}-\frac{a}{\sigma^2}} \left(\frac{(1+\kappa)+iu}{\kappa} \right)^{-\frac{1}{4}+\frac{a}{\sigma^2}}\left(\frac{2\kappa z}{\sigma^2}\right)^{-\frac{3}{4}+\frac{a}{\sigma^2}}\\&
   \exp\Big((\kappa/\sigma^2)(X_0-z)\Big) \exp\left(\frac{2}{\sigma}{(1+i)}(\sqrt{z}-\sqrt{X_0})\sqrt{u}\right)
\end{align*}
and then 
\begin{align*}
   \left|\frac{\partial \psi(u,z)}{\partial z}\right|&\sim  \frac{\sqrt{2}}{\sigma}    \left({X_0}\right)^{\frac{1}{4}-\frac{a}{\sigma^2}} \left({z}\right)^{-\frac{3}{4}+\frac{a}{\sigma^2}}\exp\Big((\kappa/\sigma^2)(X_0-z)\Big)u^{-\frac12}
    \exp\left(\frac{2}{\sigma}(\sqrt{z}-\sqrt{X_0})\sqrt{u}\right)
\end{align*}
as $u\rightarrow \infty$ uniformly in bounded $z$-domain  in $(0,X_0)$.  Hence,  $
\int_{1}^{+\infty} \left|\e^{(1+iu)t}\frac{\partial \psi}{\partial z}(u,z)\right| du <\infty 
$ is uniformly convergent in any bounded $z$-domain  in $(0,X_0)$. 

On the other hand, for the integral from $-\infty$ to $-1$, we have by formula  (10.3.68) of \cite{zbMATH06350954} that for $x,b$ and $y\in \mathbb C$
\begin{equation}\label{asymp:U-}
U(-x, b, y) \sim\left(\frac yx\right)^{\frac{1-b}{2}} \Gamma(x+1)\e^{\frac y2} 
\left(\cos(\pi x)J_{b-1}(2\sqrt{xy}) + \sin(\pi x)Y_{b-1}(2\sqrt{xy})\right), \; \text{ as } x\to+\infty,
\end{equation}
which is valid inside the sector $|\arg(x)|<\pi$ and uniformly in bounded $b$ and $y$-domains, where $J_{\nu}$ and $Y_{\nu}$ stand for the Bessel functions of respectively  the first and the second kind. By (9.2.1) and (9.2.2)  of \cite{AbrSte}  we have as $|x|\to\infty$ 
\begin{align}
J_{\nu}(x)&=\sqrt{\frac2{\pi x}}\left(\cos\left(x-\frac{\nu \pi}2-\frac{\pi}4\right)+\e^{|\Im(x)|}\text{O}\left(|x|^{-1}\right)\right), \;|\arg(x)|<\pi\label{eq:j}\\
Y_{\nu}(x)&=\sqrt{\frac2{\pi x}}\left(\sin\left(x-\frac{\nu \pi}2-\frac{\pi}4\right)+\e^{|\Im(x)|}\text{O}\left(|x|^{-1}\right)\right),\;|\arg(x)|<\pi.\label{eq:y}
\end{align}
Combining all this with the following standard asymptotic expansions  valid for any  $\alpha \in \mathbb R$, $\beta>0$ and $u\to +\infty$
$\cos(\alpha+i \beta u)=O(e^{\beta u})$,  $\sin(\alpha+i \beta u)=O(e^{\beta u})$ and with the relation 
$\cos(z_1)\cos(z_2)+\sin(z_1)\sin(z_2)=\cos(z_1-z_2)$, $z_1, z_2\in \mathbb C$,  we get
\begin{align*}
U\left(-(\frac{iu-1}{\kappa}-j), \frac{2a}{\sigma^2+j} , 2\frac{\kappa v}{\sigma^2} \right)&\sim
\frac{\e^{\frac{\kappa v}{\sigma^2}} }{\sqrt{\pi}}
\left( \frac{\frac {2\kappa v}{\sigma^2 }}{ \frac{iu-1}{\kappa}-j}\right)^{\frac{1-j}{2}-\frac{a}{\sigma^2}}
\left(\frac {2\kappa v}{\sigma^2 }\left( \frac{iu-1}{\kappa}-j\right)\right)^{-\frac 14}\\
&\hspace{-5cm}\times\Gamma\left(\frac{iu-1}{\kappa}-j+1\right)\cos\left( \pi\left( \frac{iu-1}{\kappa}-j\right) -2\sqrt{\frac {2\kappa v}{\sigma^2 }\left(\frac{iu-1}{\kappa}-j\right)}+\frac{\pi}2(\frac{2a}{\sigma^2}+j-1)+\frac{\pi}4  \right)
\end{align*}
as $u\rightarrow +\infty$ uniformly in bounded $v$-domain.
Using that 
\begin{multline*}
\cos\left( \pi\left( \frac{iu-1}{\kappa}-j\right) -2\sqrt{\frac {2\kappa v}{\sigma^2 }\left(\frac{iu-1}{\kappa}-j\right)}+\frac{\pi}2(\frac{2a}{\sigma^2}+j-1)+\frac{\pi}4  \right)
\\\sim \frac 12 e^{-i\pi \left(\frac{a}{\sigma^2}- \frac 1{\kappa}-\frac{j}2+\frac 14\right)}
e^{\frac{\pi}{\kappa} u-2(1-i)\sqrt{\frac{v}{\sigma^2}u}},
\end{multline*}
we get
\begin{align*}
U\left(-(\frac{iu-1}{\kappa}-j), \frac{2a}{\sigma^2+j} , 2\frac{\kappa v}{\sigma^2} \right)&\sim \frac{e^{\frac {\kappa v}{\sigma^2 }}}{2\sqrt{\pi}}e^{-i\pi \left(\frac{a}{\sigma^2}- \frac 1{\kappa}-\frac{j}2+\frac 14\right)}
e^{\frac{\pi}{\kappa} u-2(1-i)\sqrt{\frac{v}{\sigma^2}u}}\left(\frac {2\kappa v}{\sigma^2 }\right)^{\frac 14-{{\frac j2-\frac{a}{\sigma^2}}{}}}
\\&\times\Gamma\left(\frac{iu-1}{\kappa}-j+1\right)
\left( \frac{iu-1}{\kappa}-j\right)^{-\frac 34+ \frac j2+\frac a{\sigma^2}},
\end{align*}
as $u\rightarrow +\infty$ uniformly in bounded $v$-domain. Thus,
\begin{align*}
   \left|\frac{\partial \psi(u,z)}{\partial z}\right|&\sim  \frac{\sqrt{2}}{\sigma}    \left({X_0}\right)^{\frac{1}{4}-\frac{a}{\sigma^2}} \left({z}\right)^{-\frac{3}{4}+\frac{a}{\sigma^2}}\exp\Big((\kappa/\sigma^2)(X_0-z)\Big)u^{-\frac12}
    \exp\left(\frac{2}{\sigma}(\sqrt{z}-\sqrt{X_0})\sqrt{u}\right)
\end{align*}
as $u\rightarrow \infty$ uniformly in bounded $z$-domain  in $(0,X_0)$. Hence,  we deduce that 
\begin{equation*}
\int_{-\infty}^{-1} \left|\e^{(1+iu)t}\frac{\partial \psi}{\partial z}(u,z)\right| du <\infty 
\end{equation*}
  uniformly in any bounded $z$-domain subset of $(0,X_0)$. Finally,  we complete the proof by noticing that  $(u,z)\in\mathbb R\times K\mapsto \e^{(1+iu)t}\frac{\partial \psi}{\partial z}(u,z)$ is a continuous function for any compact set  $K\subset(0,X_0)$ (see e.g. \cite[Theorem B.3]{PinZaf}). 
\end{proof}
\subsection{Numerical tests}
For these numerical tests, we consider the problem of pricing  D-O and  U-O barrier options 
$\pi_{\mathcal D}=\mathbb{E}\Big[f(X_T)\mathbbm{1}_{\{ \inf_{t\in[0,T]}X_t>{\mathcal D}\}}\Big] \mbox { and } \pi_{\mathcal U}=\mathbb{E}\Big[f(X_T)\mathbbm{1}_{\{\sup_{t\in[0,T]}X_t<{\mathcal U}\}}\Big],$
where the payoff function  $f(x)=\e^{-rT}(x-K)_+$ and $(X_t)_{0\le t\le T}$ is the CIR process solution to \eqref{eq}. By the Lamperti transform we get 
$$\pi_{\mathcal D}=\mathbb{E}\Big[g(Y_T)\mathbbm{1}_{\{ \inf_{t\in[0,T]}Y_t>\sqrt{{\mathcal D}}\}}\Big] \mbox { and } \pi_{\mathcal U}=\mathbb{E}\Big[g(Y_T)\mathbbm{1}_{\{\sup_{t\in[0,T]}Y_t<\sqrt{{\mathcal U}}\}}\Big],$$
where $g(x)=\e^{-rT}(x^2-K)_+$ and $(Y_t)_{t\in[0,T]}$ is solution to \eqref{lamperti-CIR}.  We approximate $\pi_{\mathcal D}$ (resp. $\pi_{\mathcal U}$)  by the improved MLMC algorithm  $\bar Q_{\mathcal D}$ given in   \eqref{MLMC:D}  (resp.  $\bar P_{\mathcal U}$ given in   \eqref{MLMC:U}), where we used our  interpolated drift implicit scheme
\begin{align*}
    \overline{Y}^n_t&=    \overline{Y}^n_{t_{i}}+ \left( \dfrac{a-\gamma^2}{2\overline{Y}^n_{t_{i+1}}}- \dfrac{\kappa}{2}\overline{Y}^n_{t_{i+1}}
\right)(t-t_{i})+\gamma(W_t-W_{t_{i}}),~~~~ \text{for}\, t \in [t_i,t_{i+1}]\\
Y_0&=\sqrt{X_0},
\end{align*}
with $\gamma = \dfrac{\sigma}{2}$. 
For $n$ large enough, the positive solution to the above implicit scheme is explicit and given by 
$$
\overline{Y}^n_{t_{i+1}}=\frac{\sqrt{(2+\kappa\frac Tn)( a-\gamma^2)\frac Tn + (\gamma(W_{t_{i+1}}-W_{t_i})+\overline{Y}^n_{t_{i}})^2}+\gamma(W_{t_{i+1}}-W_{t_i})+\overline{Y}^n_{t_{i}} }{2+\kappa\frac Tn}.
$$
To illustrate the performance of our MLMC algorithms we consider the same comparison procedure as in \cite{giles2015}.  We considered the same model and option parameters proposed by  \cite{Davydov}. We take $r=0.1$, $X_0=100$, $a=0$, $\kappa=-0.1$, $\sigma=2.5$ and $T=0.5$.  For the D-O option the strike is $K=95$, and the barrier ${\mathcal D}=90$ and  for the U-O option the strike is $K=105$ and the barrier ${\mathcal U}=120$.   The benchmark prices given in \cite{Davydov}  for the D-O  (resp. U-O)  option is 10.6013 (resp. 0.7734). The performance of the improved MLMC is given in the tables and figure below. 
\begin{table}[h!]
\begin{center}
\begin{tabular}{ |c | c | c | c | c |}
\hline
 Accuracy & Price & MLMC cost & MC cost & Saving\\ 
 \hline
 \hline
$10^{-3}$ & 10.669 & $2.588\times 10^8$ & $6.752\times 10^{10} $ & 260.91\\  
$5\times10^{-3}$ & 10.668 & $1.051\times 10^7$ & $3.376\times 10^8$ & 32.13\\  
$10^{-2}$ & 10.668 & $2.510\times 10^6$ & $4.220\times 10^7 $ & 16.81 \\  
$2\times10^{-2}$ & 10.677  & $6.187\times 10^5$ & $5.275\times 10^6$ & 8.52\\
\hline
\end{tabular}
\caption{MLMC complexity tests for D-O barrier option pricing of $\pi_{\mathcal D}$ }
\end{center}
\end{table}

\begin{table}[h!]
\begin{center}
\begin{tabular}{ |c | c | c | c | c |}
\hline
 Accuracy & Price & MLMC cost & MC cost & Saving\\ 
 \hline
 \hline
$10^{-3}$ & 0.77200 & 4.674$\times 10^6$ & 4.221$\times 10^8 $ & 90.32\\  
$5\times10^{-3}$ & 0.76926 & 1.571$\times 10^5$ & 2.11$\times 10^6 $ & 13.44\\  
$10^{-2}$ & 0.77015 & $3.809\times 10^4$ & $2.638\times 10^5 $ & 6.93 \\  
$2\times10^{-2}$ & 0.78168  & $1.463\times 10^4$ & $6.596\times 10^4$ & 4.51\\
\hline
\end{tabular}
\caption{MLMC complexity tests for U-O barrier option pricing $\pi_{\mathcal U}$}
\end{center}
\end{table}
\begin{figure}[h!]
\begin{subfigure}{.45\textwidth}
  \centering
  \includegraphics[width=.9\linewidth]{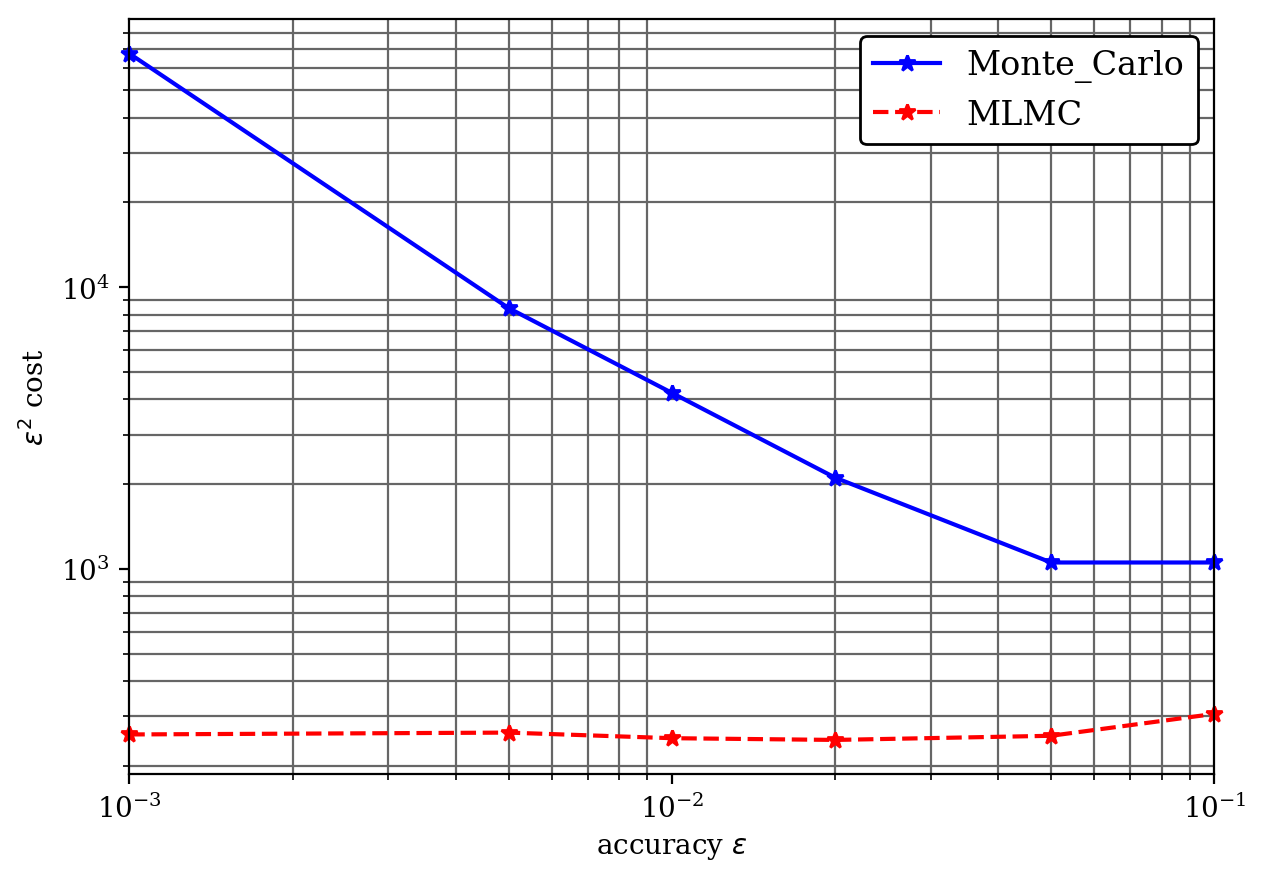}  
  \caption{Approximation of $\pi_{\mathcal D}$}
  \label{fig:sub-first}
\end{subfigure}
\begin{subfigure}{.45\textwidth}
  \centering
  \includegraphics[width=.9\linewidth]{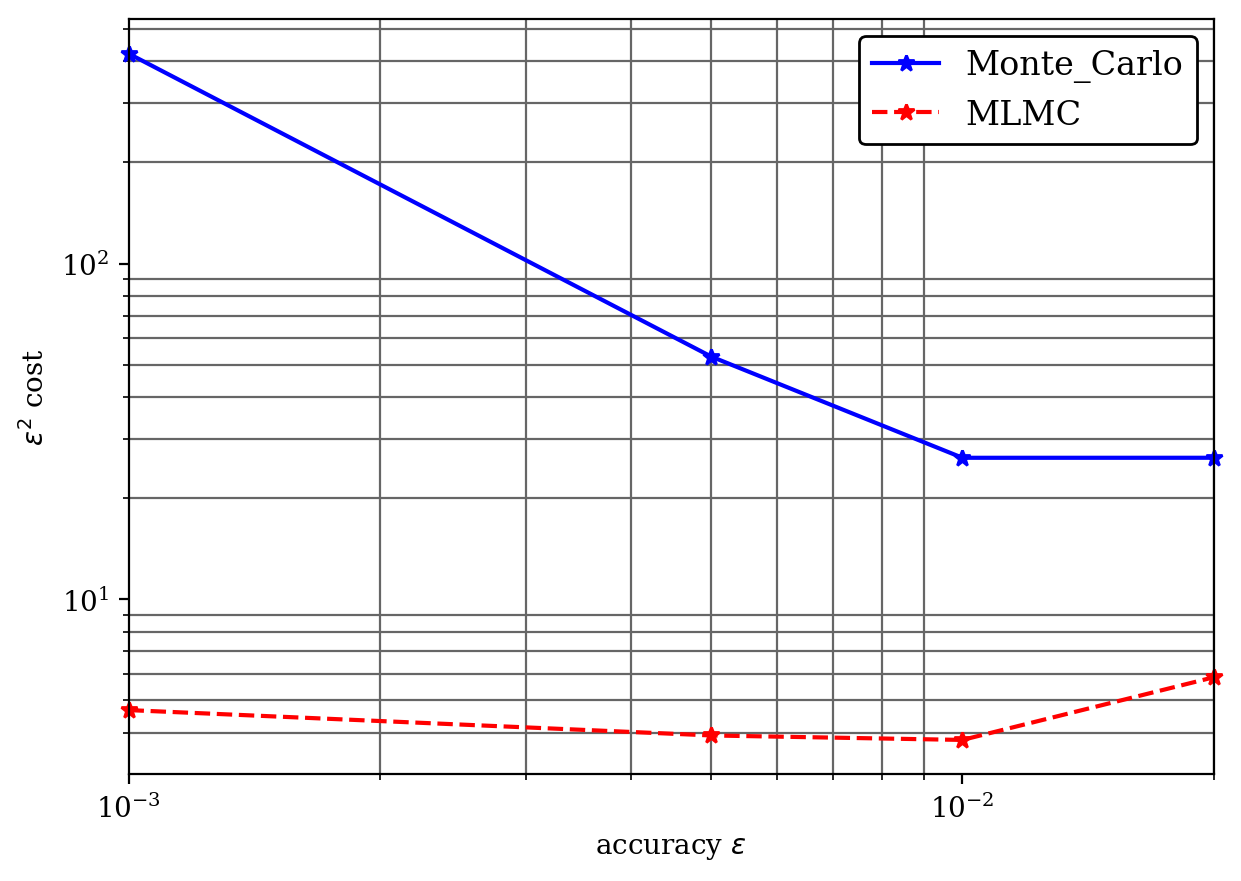}  
  \caption{Approximation of $\pi_{\mathcal U}$}
  \label{fig:sub-second}
\end{subfigure}
\caption{ Comparison for the performances of MLMC vs classical MC algorithm under the CIR model.}
\label{fig:fig}
\end{figure}
The  numerical results illustrates Theorem \ref{MLMC:var}, that is the improved MLMC algorithm reaches for a given precision $\varepsilon$  the optimal time complexity $O(\varepsilon^{-2})$ for D-O and U-O barrier options under the CIR model.
\section{Application to CEV process}\label{sec:CEV}
In this section, we consider the  CEV process solution to
\begin{equation}\label{sde-cev}
dX_t=\mu X_t dt + \sigma X_t^{\alpha}dW_t, \hspace{2mm} t\geq 0, \hspace{2mm}, X_0>0, \;\; \mu\in\mathbb R \;\; \mbox{ and } \alpha>1.
\end{equation}
We consider the problem of pricing  an U-O barrier option $\Pi^{\text{U-O}, X}_{\mathcal D}:=\mathbb{E}\Big[f(X_T)\mathbbm{1}_{\{ \sup_{t\in[0,T]}X_t<{\mathcal D}\}}\Big] $ with barrier $\mathcal D$  where $f$ is a bounded Lipschitz function with Lipschitz constant $[f]_{\text{Lip}}$.
For $\alpha>1$, by Feller's test the solution  of \eqref{sde-cev} is known to be positive 
(see e.g. \cite[Lemma 6.4.4.1]{zbMATH05017324}). Applying the  Lamperti transformation, the process $(Y_t)_{0\le t\le T}$ given by  $Y_t={X_t^{1-\alpha}}$ is well defined on $I=(0,+\infty)$ and satisfies
 \begin{equation}
\label{lamperti}
\left \{
\begin{array}{rll}
 dY_t&=&L(Y_t)dt+ \gamma dW_t\\
Y_0&=&{X_0^{1-\alpha}},\\
\end{array}
\right.\\
\end{equation}
where  $L(y)=(1-\alpha)\left( \mu y - \alpha\frac{\sigma^2}2 y^{-1}\right)$ and $\gamma = \sigma(1-\alpha).$  
Thus, as the map $x\mapsto x^{1-\alpha}$ is decreasing, our initial pricing problem is transformed as follows on the Lamperti transform space
\begin{equation}\label{pricing:prob-cev}
\Pi^{\text{U-O}, X}_{\mathcal D}=\mathbb{E}\Big[g(Y_T)\mathbbm{1}_{\{ \inf_{t\in[0,T]}Y_t>{{\mathcal D}^{1-\alpha}}\}}\Big],
\end{equation}
with $g : x\in\mathbb R \mapsto f(x^{\frac{1}{1-\alpha}})$.
As $\lim\limits_{y \to 0^+}L'(y)= \lim\limits_{y \to 0^+} (1-\alpha)(\mu + \alpha\frac{\sigma^2}2 y^{-2} )=-\infty$, we deduce that  $L$ is decreasing on $(0, \tilde\varepsilon)$ for $\tilde\varepsilon>0$ small enough and it is clearly globally Lipschitz on $[\tilde\varepsilon, +\infty)$ so that assumption \eqref{ine4} is satisfied with $A=\tilde\varepsilon$ and $L'_{A}=(\alpha-1)\left(|\mu| + \alpha\frac{\sigma^2}{2} {\varepsilon}^{-2}\right) $. Also, we easily check assumptions \eqref{ine2} and \eqref{ine5}.  On the one hand, by Itô's formula the process $(Z_t)_{0\le t\le T}$ given by $Z_t=\frac{X_t^{-2(\alpha-1)}}{4(\alpha-1)^2}$ is a CIR process solution to

\begin{align}
\label{cevtocir}
\left \{
\begin{array}{rcl}
dZ_t&=&(a-\kappa Z_t)dt  - \sigma\sqrt{Z_t}dW_t , \nonumber\\
Z_0&=&\frac{X_0^{-2(\alpha-1)}}{4(\alpha-1)^2}, \\
\end{array}
\right.\\
\end{align}
with  $a=\frac{\sigma^2(2\alpha-1)}{4(\alpha-1)}$ and $\kappa=2\mu(\alpha-1)$. Thanks to this  second transformation we deduce that $\sup_{t\in[0,T]}\mathbb E[Y^q_t]<\infty$ for $q>- \frac{2\alpha-1}{2(\alpha-1)}$. On the other hand to check assumption \eqref{ine3tilde} it is enough to show that 
 \begin{equation}\label{cond:mom-cev}
 \sup_{t\in[0,T]}\mathbb{E}\big[|L'(Y_t)L(Y_t)|^p +|L''(Y_t)|^p+|L'( Y_t)|^{(2 \vee p)}+ |L(Y_t)|^p \big]< \infty
 \end{equation}
 which is  satisfied if  $
 \sup_{t\in[0,T]}\mathbb{E}\big[Y_t^{-(4\vee 3p)}\big] <\infty. 
$
This condition is satisfied when   $4< \frac{2\alpha-1}{2(\alpha-1)}$ (which corresponds to  have
$\alpha\in(1,\frac 76))$ and $p<\frac{2\alpha-1}{6(\alpha-1)}$.
\\

Besides, since by Taylor formula we have  $|g(x)-g(y)|\le \frac{[f]_{\text{Lip}}}{\alpha-1} |x-y|(|x|^{-\frac{\alpha}{\alpha-1}}+|y|^{-\frac{\alpha}{\alpha-1}})$ for all  $x,y\in I=(0,+\infty)$, then $g$ satisfies condition \eqref{cond:payoff-CEV} with $\nu=-\frac{\alpha}{\alpha-1}$.  Hence, for $\delta\in(0,1/2)$, if we choose  $\alpha$ such that $1<\alpha<\frac{59}{58}<\frac 76$ then we can find $p$ such that  $\frac{2\alpha-1}{6(\alpha-1)}>p>\frac{(1+\delta)(1+\gamma)5(1+\varepsilon)}{\frac 12 -\delta }>10$. Finally, if we choose  $h_{\ell}$ sufficiently small such that $L'_A=(\alpha-1)\left(|\mu| + \alpha\frac{\sigma^2}{2} {\varepsilon}^{-2}\right) <\frac{1}{2h^{\ell}}$, then Theorem \ref{MLMC:var-CEV} is valid provided that $\inf_{t \in [0, T]}Y_{t}$  has a bounded density in the neighbourhood of the barrier ${\mathcal D}^{1-\alpha}$. 
 By the monotone property of $x\in\mathbb R_+^*\mapsto x^{1-\alpha}$ we have the relationship $\inf_{t\in[0,T]}Y_t= (\sup_{t\in[0,T]}X_t)^{1-\alpha}$, then it is sufficient to prove that 
$\sup_{t \in [0, T]}X_{t}$  has a continuous density in the neighborhood of the barrier which is the aim of the following subsection. 
\begin{rem}
One can also consider the CEV process for  $\alpha\in(\frac12,1)$ solution to 
\begin{equation}\label{sde:cev-2}
dX_t=(a-\kappa X_t)dt + \sigma Y^{\alpha}_tdW_t, \mbox{ with } X_0>0, a>0.
\end{equation}
It can be easily checked that for $a>0$ this SDE is well defined on $I=(0,+\infty)$ (see \cite[Section 3]{alfonsi2013strong}). The associated drift implicit Euler scheme is well defined on $I$ too and satisfy the conditions of our theoretical setting.  However, the condition that $\inf_{t\in[0,T]}X_t$ or $\sup_{t\in[0,T]}X_t$  admits a continuous density in the neighborhood of the barrier seems to be a  challenging problem, since we dont have explicit Laplace transform of the corresponding hitting times as it is the case for the previous CEV process  solution to \eqref{sde-cev}. In counterpart, the efficiency of the MLMC method is still confirmed by our numerical tests  for the model \eqref{sde:cev-2}.
\end{rem}

\subsection{Running maximum of the CEV process}
Let us denote by  $\tau_{X_0 \uparrow z}:= \inf\{t\geq 0 : X_t=z\}$ the first time that the CEV process
$(X_t)_{t\ge0}$ starting at $X_0$ hits the level $z>X_0$. From  \cite[subsections 5.3.6 and  6.4.5]{zbMATH05017324}, the Laplace transform of the hitting time $\tau_{X_0 \uparrow z}$ is given by 
\begin{align}\label{lap:cev-max}
\bold{E}[\e^{-s\tau_{X_0 \uparrow z}}]&=\Big(\frac{X_0}{z}\Big)^{\beta+\frac{1}{2}}\exp\Big(\frac{\epsilon}{2}c(X_0^{-2\beta}-z^{-2\beta})\Big)\dfrac{W_{k,n}(cX_0^{-2\beta})}{W_{k,n}(cz^{-2\beta})},
\end{align}
with $\epsilon=\mbox{sign}(\mu \beta)$, $n=\frac{1}{4\beta}$, $k= \epsilon \Big( \frac{1}{2}+ \frac{1}{4\beta} \Big)- \frac{s}{2|\mu \beta|}$ and $W_{k,n}$ the Whittaker's function $W_{k,n}(y)=y^{n+\frac{1}{2}} \e^{-y/2}U(n-k+\frac{1}{2}, 2n+1,y)$, where $U$ denotes the  confluent hypergeometric function of second kind defined in \eqref{def:U} and with $\beta=\alpha-1$ and $c=\dfrac{|\mu|}{\beta \sigma^2}$.
\begin{thm}\label{thm:CEV-max}
Let $(X_t)_{0\le t\le T}$ denotes the CEV process solution to  \eqref{sde-cev}.  Then  $\sup_{t\in[0,T]} X_t$ has a continuous density  on any compact set  $K\subset(X_0,+\infty)$, given by
\begin{equation}
\label{CEV-Max}
z\in K\mapsto P_{{\text{CEV, Max}}}(z)=\frac{1}{2\pi}\int_{-\infty}^{+\infty} \e^{(1+iu)T} {\hat \Phi}(z,u)
du,
\end{equation}
with  
\begin{align*}
 {\hat \Phi}(z,u)&= -\frac{c}{\mu} z^{-2\beta-1}\frac{U(\frac{1+iu}{2\mu \beta}, 1+\frac{1}{2\beta}, c X_0^{-2\beta})U(\frac{1+iu}{2\mu \beta}+1, 2+\frac{1}{2\beta}, cz^{-2\beta})}{U(\frac{1+iu}{2\mu \beta}, 1+\frac{1}{2\beta}, c z^{-2\beta})^2}, \mbox{ for } \mu>0\\
\mbox{and}&\\
 {\hat \Phi}(z,u)&= -cz^{-2\beta-1}\left( \frac{2\beta +1}{1+iu}-\frac 1{\mu}\right)\frac{U(1+\frac 1{2\beta}-\frac{1+iu}{2 \mu \beta}, 1+\frac{1}{2\beta}, c X_0^{-2\beta})U(2+\frac 1{2\beta}-\frac{1+iu}{2 \mu \beta}, 2+\frac{1}{2\beta}, cz^{-2\beta})}{U(1+\frac 1{2\beta}-\frac{1+iu}{2 \mu \beta}, 1+\frac{1}{2\beta}, c z^{-2\beta})^2},
\end{align*}
for $\mu<0$.
\end{thm}
\begin{proof}
$\bullet$ {\bf Case $ {\mu>0}$}. 
Let us recall that the process given by $Z_t= \frac{X_t^{-2(\alpha-1)}}{4(\alpha-1)^2}$ is solution to \eqref{cevtocir}.
Then,  as $\alpha>1$ we get
\begin{align*}
    \mathbb{P}\Big(\sup_{t\in [0, T]}X_t \geq z \Big) &= 
    \mathbb P\Big( (\sup_{t \in[0, T]} X_t)^{-2(\alpha-1)} \leq z^{-2(\alpha-1)}\Big)\\
    &= \mathbb P\Big( \inf_{t\in [0, T]}  X_t^{-2(\alpha-1) }\leq z^{-2(\alpha-1)} \Big)\\  
  &
  =\mathbb{P}\Big(\inf_{t\in[0, T]}Z_t\leq  \frac{z^{-2(\alpha-1)}}{4(\alpha-1)^2}\Big).
\end{align*}
Now, using the same arguments in the proof of Theorem \ref{thm:CIR-min} with $Z_0=\frac{X_0^{-2(\alpha-1)}}{4(\alpha-1)^2}$,  $a=\frac{\sigma^2(2\alpha-1)}{4(\alpha-1)}$, $\kappa=2\mu(\alpha-1)$, we get for $\beta=\alpha-1$ and $c=\dfrac{\mu}{\beta \sigma^2}$ 
\begin{align*}
     \mathbb{P}\Big(\sup_{t\in [0, T]}X_t \geq z \Big)&=\mathbb{P}\left[ \inf_{0\leq s \leq t} Z_s \leq \frac{z^{-2(\alpha-1)}}{4(\alpha-1)^2}\right]\\
    &=\frac{1}{2\pi}\int_{-\infty}^{+\infty} \e^{(1+iu)t} \Phi(u,z)du,\\
    \mbox{ with } \Phi(u,z)&:= \dfrac{1}{1+iu} \dfrac{U(\frac{1+iu}{2\mu \beta}, 1+\frac{1}{2\beta}, c X_0^{-2\beta})}{U(\frac{1+iu}{2\mu \beta}, 1+\frac{1}{2\beta}, c z^{-2\beta})}.
\end{align*}
and thus we easily get \eqref{CEV-Max}.
\\
\paragraph*{$\bullet$ \bf Case $ {\mu<0}$.}
For this case, by \eqref{lap:cev-max} we have 
\begin{align*}
\bold{E}[\e^{-s\tau_{X_0 \uparrow z}}]=& \exp\Big(-c(X_0^{-2\beta}-y^{-2\beta})\Big)\dfrac{U\Big(1+\frac 1{2\beta}-\frac{s}{2 \mu \beta}, \frac{1}{2\beta}+1, cX_0^{-2\beta}\Big)}{U\Big(1+\frac 1{2\beta}-\frac{s}{2 \mu \beta}, \frac{1}{2\beta}+1, cz^{-2\beta}\Big)}
\end{align*}
and $$\mathbb{P}\big[\sup_{0\leq s \leq t} X_s>z\big]= \frac{\exp\Big(-c(X_0^{-2\beta}-y^{-2\beta})\Big)}{2\pi}\int_{-\infty}^{+\infty}  {\e^{(1+iu)t}} \Phi(z,u) du,$$ 
with $$\Phi(z,u)= \frac{1}{1+iu} \dfrac{U\Big(1+\frac 1{2\beta}-\frac{1+iu}{2 \mu \beta}, \frac{1}{2\beta}+1, cX_0^{-2\beta}\Big)}{U\Big(1+\frac 1{2\beta}-\frac{1+iu}{2 \mu \beta}, \frac{1}{2\beta}+1, cz^{-2\beta}\Big)}.$$
By  \eqref{Kummer-deriv}, we obtain
$$\frac{\partial \Phi}{\partial z}(z,u)= -cz^{-2\beta-1}\left( \frac{2\beta +1}{1+iu}-\frac 1{\mu}\right)\frac{U(1+\frac 1{2\beta}-\frac{1+iu}{2 \mu \beta}, 1+\frac{1}{2\beta}, c X_0^{-2\beta})U(2+\frac 1{2\beta}-\frac{1+iu}{2 \mu \beta}, 2+\frac{1}{2\beta}, cz^{-2\beta})}{U(1+\frac 1{2\beta}-\frac{1+iu}{2 \mu \beta}, 1+\frac{1}{2\beta}, c z^{-2\beta})^2}.$$ Proceeding in the same way as for the previous case, we first apply \eqref{asymp:U+} to get  for $j\in\{0,1\}$ and $v\in\{X_0,z\}$
\begin{align*}
U(1+j+\frac 1{2\beta}-\frac{1+iu}{2 \mu \beta}, 1+j+\frac{1}{2\beta}, c v^{-2\beta})&\sim
\sqrt{\pi}\frac{\left(1+j+\frac 1{2\beta}-\frac{1+iu}{2 \mu \beta}\right)^{-\frac14 +\frac j2+\frac{1}{4\beta} }}{\Gamma\left(1+j+\frac 1{2\beta}-\frac{1+iu}{2 \mu \beta}\right)}\left(c v^{-2\beta}\right)^{-\frac14 -\frac j2-\frac{1}{4\beta} }\\&\times
\exp\left({\frac{c v^{-2\beta}}2-2\sqrt{c v^{-2\beta}\left(1+j+\frac 1{2\beta}-\frac{1+iu}{2 \mu \beta}\right)}}\right)
\end{align*}
and then we use that $\exp\left(-2\sqrt{c v^{-2\beta}\left(1+j+\frac 1{2\beta}-\frac{1+iu}{2 \mu \beta}\right)} \right) \sim\exp\left(-\sqrt{-\frac{c v^{-2\beta}}{ \mu \beta}}(1+i)\sqrt{u}\right)$
as $u\to +\infty$, uniformly in bounded $v$-domain. We then deduce that
\begin{align*}
\frac{\partial \Phi}{\partial z}(z,u)&\sim-cz^{-2\beta-1}\left( \frac{2\beta +1}{1+iu}-\frac 1{\mu}\right)\left(1+\frac 1{2\beta}-\frac{1+iu}{2 \mu \beta}\right)^{-\frac34 -\frac{1}{4\beta} }\left(2+\frac 1{2\beta}-\frac{1+iu}{2 \mu \beta}\right)^{\frac14 +\frac{1}{4\beta} }
\\
&\times
\left(c {X_0}^{-2\beta}\right)^{-\frac14 -\frac{1}{4\beta} }
\left(c {z}^{-2\beta}\right)^{-\frac14 +\frac{1}{4\beta} } \exp\left(\frac c2\left( X_0^{-2\beta}-z^{-2\beta}\right)\right)
\exp\left( \sqrt{-\frac{c}{ \mu \beta}}(1+i)\sqrt{u}(z^{-\beta}-X_0^{-\beta})\right)
\end{align*}
and 
\begin{align*}
\left|\frac{\partial \Phi}{\partial z}(z,u)\right|&\sim
\sqrt{-\frac{{2\beta c}}{\mu}}(X_0)^{\frac 12({\beta}+1)}z^{-\frac 32(\beta+1)}
u^{-\frac 12} \exp\left(\sqrt{-\frac c{\mu\beta}}\sqrt{u}(z^{-\beta}-X_0^{-\beta})\right)
\exp\left(\frac{c}{2}(X_0^{-2\beta}-z^{-2\beta})\right)
\end{align*}
as $u\to +\infty$ uniformly  in any bounded $z$-domain subset of $(X_0,+\infty)$.  As $c, \beta$ and $-\mu$ are positive constants it follows that $\int_{1}^{+\infty} \left|\e^{(1+iu)t}\frac{\partial \Phi}{\partial z}(z,u)\right| du <\infty 
$ is uniformly convergent in any bounded $z$-domain  in $(X_0,+\infty)$.  Concerning the integral from $-\infty$ to $-1$, we proceed as above and use \eqref{asymp:U-}, \eqref{eq:j} and \eqref{eq:y} to get 
 for $j\in\{0,1\}$ and $v\in\{X_0,z\}$
 \begin{align*}
& U(-(-1-j-\frac 1{2\beta}+\frac{1-iu}{2 \mu \beta}),1+j+\frac{1}{2\beta}, c v^{-2\beta})\sim
 \frac{\e^{\frac{c v^{-2\beta}}2}}{\sqrt{\pi}}
 \left(\frac{c v^{-2\beta}}{-1-j-\frac 1{2\beta}+\frac{1-iu}{2 \mu \beta}}\right)^{-\frac{j}2-\frac 1{4\beta}}\\
 &\times \left(c v^{-2\beta}\left(-1-j-\frac 1{2\beta}+\frac{1-iu}{2 \mu \beta}\right) \right)^{-\frac 14}
 \\&\times \cos\left(\pi\left(-1-j-\frac 1{2\beta}+\frac{1-iu}{2 \mu \beta}\right) - 
 2\sqrt{c v^{-2\beta}\left(-1-j-\frac 1{2\beta}+\frac{1-iu}{2 \mu \beta}\right)}
 +\frac{\pi}2 (j+\frac 1{2\beta})+\frac{\pi}4
 \right)
 \end{align*}
as $u\rightarrow +\infty$ uniformly in bounded $v$-domain. 
Using that 
\begin{multline*}
 \cos\left(\pi\left(-1-j-\frac 1{2\beta}+\frac{1-iu}{2 \mu \beta}\right) - 
 2\sqrt{c v^{-2\beta}\left(-1-j-\frac 1{2\beta}+\frac{1-iu}{2 \mu \beta}\right)}
 +\frac{\pi}2 (j+\frac 1{2\beta})+\frac{\pi}4
 \right)
\\\sim \frac 12 e^{-i\pi \left(-\frac 34 - \frac j2 -\frac 1{4\beta}+ \frac 1{2\mu\beta}\right)}
e^{-\frac{\pi}{2\mu\beta} u-(1-i)\sqrt{-\frac{cv^{-2\beta}}{\mu\beta}u}},
\end{multline*}
as $u\rightarrow +\infty$ uniformly in bounded $v$-domain.  Thus, we get
\begin{align*}
\left|\frac{\partial \Phi}{\partial z}(z,u)\right|&\sim
\sqrt{-\frac{{2\beta c}}{\mu}}(X_0)^{\frac 12({\beta}+1)}z^{-\frac 32(\beta+1)}
u^{-\frac 12} \exp\left(\sqrt{-\frac c{\mu\beta}}\sqrt{u}(z^{-\beta}-X_0^{-\beta})\right)
\exp\left(\frac{c}{2}(X_0^{-2\beta}-z^{-2\beta})\right)
\end{align*}
as $u\to +\infty$ uniformly  in any bounded $z$-domain subset of $(X_0,+\infty)$. 
As $c, \beta$ and $-\mu$ are positive constants it follows that $\int_{-\infty}^{-1} \left|\e^{(1+iu)t}\frac{\partial \Phi}{\partial z}(z,u)\right| du <\infty 
$ is uniformly convergent in any bounded $z$-domain  in $(X_0,+\infty)$. We complete the proof by noticing that  $(u,z)\in\mathbb R\times K\mapsto \e^{(1+iu)t}\frac{\partial \Phi}{\partial z}(u,z)$ is a continuous function for any compact set  $K\subset(X_0,+\infty)$ (see e.g. \cite[Theorem B.3]{PinZaf}).
\end{proof}
\subsection{Minimum of CEV process}
Let us denote by  $\tau_{X_0 \downarrow z}:= \inf\{t\geq 0 : X_t=z\}$ the first time that the CEV process
$(X_t)_{t\ge0}$ starting at $X_0$ hits the level $0<z<X_0$. By  \cite[subsections 5.3.6 and  6.4.5]{zbMATH05017324}, the Laplace transform of the hitting time $\tau_{X_0 \downarrow z}:= \inf\{t\geq 0 : X_t=z\}$ is given by 
\begin{align}\label{lap:min-cev}
\bold{E}[\e^{-s\tau_{X_0 \downarrow z}}]&=\Big(\frac{X_0}{z}\Big)^{\beta+\frac{1}{2}}\exp\Big(\frac{\epsilon}{2}c(X_0^{-2\beta}-z^{-2\beta})\Big)\dfrac{M_{k,n}(cX_0^{-2\beta})}{M_{k,n}(cz^{-2\beta})}
\end{align}
with  $\epsilon={\rm sign}(\mu \beta)$, $n=\frac{1}{4\beta}$, $k=\epsilon\Big(\frac{1}{2}+\frac{1}{4\beta}\Big)-\frac{s}{2\beta|\mu|}$ and the Whittaker function $M_{k,n}(y)= y^{n+\frac{1}{2}} \e^{-\frac{y}{2}} {_1F_1}(n-k+\frac{1}{2}, 2n+1, y)$, where $_1F_1$ denotes the  confluent hypergeometric function of the first kind defined in \eqref{Kummer} and with $\beta=\alpha-1$ and $c=\dfrac{|\mu|}{\beta \sigma^2}$.
\begin{thm}\label{thm:CEV-min}
Let $(X_t)_{0\le t\le T}$ denotes the CEV process solution to  \eqref{sde-cev}.  Then  $\inf_{t\in[0,T]} X_t$ has a continuous density  on any compact set  $K\subset(0,X_0)$, given by
\begin{equation}
\label{CEV-Min}
z\in K\mapsto P_{\text{CEV, Min}}(z)=\frac{1}{2\pi}\int_{-\infty}^{+\infty} \e^{(1+iu)T} {\hat \Psi}(z,u)
du,
\end{equation}
with  
\begin{align*}
{\hat \Psi}(z,u)&=  -\frac{cz^{-2\beta-1}}{\mu(1+\frac 1{2\beta})}\dfrac{{_1F_1}(\frac{1+iu}{2\mu \beta}, 1+\frac{1}{2\beta}, c X_0^{-2\beta}){_1F_1}(\frac{1+iu}{2\mu \beta}+1, 2+\frac{1}{2\beta}, c z^{-2\beta})}{{_1F_1}(\frac{1+iu}{2\mu \beta}, 1+\frac{1}{2\beta}, c z^{-2\beta})^2}, \mbox{ for } \mu>0\\
\mbox{and}&\\
{\hat \Psi}(z,u)&=  -{cz^{-2\beta-1}}\left(\frac{2\beta}{1+iu} -\frac 1{\mu(1+\frac 1{2\beta})}\right)\\&\times \dfrac{{_1F_1}(1+\frac{1}{2\beta}-\frac{1+iu}{2 \mu \beta}, 1+\frac{1}{2\beta}, c X_0^{-2\beta}){_1F_1}(2+\frac{1}{2\beta}-\frac{1+iu}{2 \mu \beta}, 2+\frac{1}{2\beta}, c z^{-2\beta})}{{_1F_1}(1+\frac{1}{2\beta}-\frac{1+iu}{2 \mu \beta}, 1+\frac{1}{2\beta}, c z^{-2\beta})^2}, \mbox{ for }
\mu<0.
\end{align*}
\end{thm}
\begin{proof}
$\bullet$ {\bf Case $ {\mu>0}$.} Recalling that the process given by $Z_t= \frac{X_t^{-2(\alpha-1)}}{4(\alpha-1)^2}$ is solution to \eqref{cevtocir},
then,  as $\alpha>1$ we get
\begin{align*}
    \mathbb{P}\Big(\inf_{t\in [0, T]}X_t \leq z \Big) &= 
    \mathbb P\Big( (\inf_{t \in[0, T]} X_t)^{-2(\alpha-1)} \geq z^{-2(\alpha-1)}\Big)\\
    &= \mathbb P\Big( \sup_{t\in [0, T]}  X_t^{-2(\alpha-1) }\geq z^{-2(\alpha-1)} \Big)\\  
  &
  =\mathbb{P}\Big(\sup_{t\in[0, T]}Z_t\geq  \frac{z^{-2(\alpha-1)}}{4(\alpha-1)^2}\Big).
\end{align*}
Now, using the same arguments in the proof of Theorem \ref{thm:CIR-max} with $Z_0=\frac{X_0^{-2(\alpha-1)}}{4(\alpha-1)^2}$,  $a=\frac{\sigma^2(2\alpha-1)}{4(\alpha-1)}$, $\kappa=2\mu(\alpha-1)$, we get for $\beta=\alpha-1$ and $c=\dfrac{\mu}{\beta \sigma^2}$ 
\begin{align*}
     \mathbb{P}\Big(\inf_{t\in [0, T]}X_t \leq z \Big)&=\mathbb{P}\left[ \sup_{0\leq s \leq t} Z_s \geq \frac{z^{-2(\alpha-1)}}{4(\alpha-1)^2}\right]\\
    &=\frac{1}{2\pi}\int_{-\infty}^{+\infty} \e^{(1+iu)t} \Psi(u,z)du,\\
    \mbox{ with } \Psi(u,z)&:=\dfrac{1}{1+iu} \dfrac{ {_1F_1}(\frac{1+iu}{2\mu \beta}, 1+\frac{1}{2\beta}, c X_0^{-2\beta})}{{_1F_1}(\frac{1+iu}{2\mu \beta}, 1+\frac{1}{2\beta}, c z^{-2\beta})}.
\end{align*}
and thus we get \eqref{CEV-Min}.
\\
\\
\paragraph*{$\bullet$ \bf Case $ {\mu<0}$.}
In this case  by \eqref{lap:min-cev} we have, 
\begin{align*}
\bold{E}[\e^{-s\tau_{X_0 \downarrow z}}]&= \exp\Big(-c(X_0^{-2\beta}-z^{-2\beta})\Big)\dfrac{{_1F_1}\Big(\frac{1}{2\beta}+1-\frac{s}{2 \mu \beta}, \frac{1}{2\beta}+1, cX_0^{-2\beta}\Big)}{{_1F_1}\Big(\frac{1}{2\beta}+1-\frac{s}{2 \mu \beta}, \frac{1}{2\beta}+1, cz^{-2\beta}\Big)}
\end{align*}
and 
\begin{align*}
\mathbb{P}\left[ \inf_{0\leq s \leq t} X_s \geq z\right]&=
\frac{ \exp\Big(-c(X_0^{-2\beta}-z^{-2\beta})\Big)}{2\pi } \int_{-\infty}^{\infty}{\e^{(1+iu)t}} \Psi(z,u) du    
\end{align*}
with 
$\Psi(z,u) =\dfrac{1}{1+iu} \dfrac{{_1F_1}\Big(\frac{1}{2\beta}+1-\frac{1+iu}{2 \mu \beta}, \frac{1}{2\beta}+1, cX_0^{-2\beta}\Big)}{{_1F_1}\Big(\frac{1}{2\beta}+1-\frac{1+iu}{2 \mu \beta}, \frac{1}{2\beta}+1, cz^{-2\beta}\Big)} $. By \eqref{eq:der-F}, we have
\begin{align}\label{eq:dens-min-cev}
\frac{\partial \Psi}{\partial z}(z,u)&= -{cz^{-2\beta-1}}\left(\frac{2\beta}{1+iu} -\frac 1{\mu(1+\frac 1{2\beta})}\right)\\&\times \dfrac{{_1F_1}(1+\frac{1}{2\beta}-\frac{1+iu}{2 \mu \beta}, 1+\frac{1}{2\beta}, c X_0^{-2\beta}){_1F_1}(2+\frac{1}{2\beta}-\frac{1+iu}{2 \mu \beta}, 2+\frac{1}{2\beta}, c z^{-2\beta})}{{_1F_1}(1+\frac{1}{2\beta}-\frac{1+iu}{2 \mu \beta}, 1+\frac{1}{2\beta}, c z^{-2\beta})^2}.
\end{align}
For $j\in\{0,1\}$ and $v\in\{X_0,z\}$, we follow the similar steps that led us to get \eqref{kummer} and  we obtain
\begin{align*}
{_1F_1}(1+j+\frac{1}{2\beta}-\frac{1+iu}{2 \mu \beta},& 1+j+\frac{1}{2\beta}, c v^{-2\beta})\sim
\frac{\e^{\frac{cv^{-2\beta}}{2}}}{\sqrt{4\pi}}
\left(\frac{cv^{-2\beta}}{1+j+\frac 1{2\beta}-\frac{1+iu}{2\mu\beta}}\right)^{-\frac j2-\frac 1{4\beta}}\\&
\times\frac{\Gamma(1+j+\frac1{2\beta})\Gamma(1-\frac{1+iu}{2\mu\beta})}{\Gamma(1+j+\frac{1}{2\beta}-\frac{1+iu}{2 \mu \beta})}
\left(c v^{-2\beta}\left(1+j+\frac{1}{2\beta}-\frac{1+iu}{2 \mu \beta} \right)\right)^{-\frac 14}\\
&\times \exp\left(2\sqrt{c v^{-2\beta}\left(1+j+\frac{1}{2\beta}-\frac{1+iu}{2 \mu \beta} \right)}\right)
\end{align*}
as $u\to+\infty$ uniformly  in bounded $v$-domain. Now, we use 
that
$$
 \exp\left(2\sqrt{cv^{-2\beta}\left(1+j+\frac{1}{2\beta}-\frac{1+iu}{2 \mu \beta} \right)}\right)\sim 
 \exp\left(\sqrt{-\frac{cv^{-2\beta}}{\mu\beta}}(1+i)\sqrt{u}\right), \mbox{ as } u\to +\infty
$$
uniformly in bounded $v$-domain, to get 
\begin{align*}
{_1F_1}(1+j+\frac{1}{2\beta}-\frac{1+iu}{2 \mu \beta}, 1+j+\frac{1}{2\beta}, c v^{-2\beta})&\sim
\frac{\e^{\frac{cv^{-2\beta}}{2}}}{\sqrt{4\pi}}
\left({cv^{-2\beta}}\left(1+j+\frac{1}{2\beta}-\frac{1+iu}{2 \mu \beta} \right)\right)^{-\frac j2-\frac 1{4\beta}-\frac 14}\\&
\times{\Gamma(1+j+\frac1{2\beta})}
 \exp\left(\sqrt{-\frac{cv^{-2\beta}}{\mu\beta}}(1+i)\sqrt{u}\right)
\end{align*}
as $u\to+\infty$ uniformly  in bounded $v$-domain.
Hence, by \eqref{eq:dens-min-cev} we deduce that
\begin{align*}
|\frac{\partial \Psi}{\partial z}(z,u)|&\sim
\sqrt{-\frac{{2\beta c}}{\mu}}(X_0)^{\frac 12({\beta}+1)}z^{-\frac 32(\beta+1)}
u^{-\frac 12} \exp\left(\sqrt{-\frac c{\mu\beta}}\sqrt{u}(X_0^{-\beta}-z^{-\beta})\right)
\exp\left(\frac{c}2(X_0^{-2\beta}-z^{-2\beta})\right),
\end{align*}
as $u\to +\infty$ uniformly  in any bounded $z$-domain subset of $(0,X_0)$.  As $c, \beta$ and $-\mu$ are positive constants it follows that $\int_{1}^{+\infty} \left|\e^{(1+iu)t}\frac{\partial K}{\partial z}(z,u)\right| du <\infty 
$ is uniformly convergent in any bounded $z$-domain  in $(0,X_0)$.  Besides, for the integral from $-\infty$ to $-1$, we use similar steps as the ones that led us to get \eqref{eq:F-inf}, and we have
\begin{align*}
{_1F_1}(1+j+\frac{1}{2\beta}-\frac{1-iu}{2 \mu \beta}, 1+j+\frac{1}{2\beta}, c v^{-2\beta})&\sim\frac{\e^{\frac 12 cv^{-2\beta}}}{\sqrt{\pi}} \left(\frac{cv^{-2\beta}}{-1-j-\frac1{2\beta}-\frac{iu-1}{2\mu \beta}}\right)^{-\frac j2-\frac 1{4\beta}}\\&\times
\frac{\Gamma(1+j+\frac 1{2\beta}) \Gamma(-j-\frac 1{2\beta}-\frac{iu-1}{2\mu\beta})}{\Gamma(-\frac{iu-1}{2\mu\beta})}\\
&\left({cv^{-2\beta}}\left(-1-j-\frac1{2\beta}-\frac{iu-1}{2\mu \beta}\right)\right)^{-\frac 14}\\&\hspace{-2cm}\times\cos\left(2\sqrt{cv^{-2\beta}\left(-1-j-\frac1{2\beta}-\frac{iu-1}{2\mu \beta}\right)}-\frac{\pi}2(j+\frac 1{2\beta})-\frac{\pi}{4}\right)
\end{align*}
as $u\to\infty$ uniformly on $v$-bounded domain.  Now, using 
 that
$$
\cos\left(2\sqrt{cv^{-2\beta}\left(-1-j-\frac1{2\beta}-\frac{iu-1}{2\mu \beta}\right)}-\frac{\pi}2(j+\frac 1{2\beta})-\frac{\pi}{4}\right)
\\\sim \frac 12 e^{i\pi \left(\frac{j}2+\frac 1{4\beta}+\frac 14\right)}
e^{(1-i)\sqrt{-\frac{cv^{-2\beta}}{\mu\beta}u}},
$$
as $u\to\infty$ uniformly on $v$-bounded domain, we get
\begin{align*}
{_1F_1}(1+j+\frac{1}{2\beta}-\frac{1-iu}{2 \mu \beta}, 1+j+\frac{1}{2\beta}, c v^{-2\beta})&\sim\frac{\e^{\frac 12 cv^{-2\beta}}}{2\sqrt{\pi}} \left({cv^{-2\beta}}\right)^{-\frac j2-\frac 1{4\beta}-\frac 1{4}}
{\Gamma(1+j+\frac 1{2\beta}) }\\
&\left(-1-j-\frac1{2\beta}-\frac{iu-1}{2\mu \beta}\right)^{-\frac j2 - \frac 1{4\beta}-\frac 14}\e^{i\pi \left(\frac{j}2+\frac 1{4\beta}+\frac 14\right)}\\&\times
\exp\left({(1-i)\sqrt{-\frac{cv^{-2\beta}}{\mu\beta}u}}\right)
\end{align*}
and that
\begin{align*}
|\frac{\partial \Psi}{\partial z}(z,-u)|&\sim
\sqrt{-\frac{{2\beta c}}{\mu}}(X_0)^{\frac 12({\beta}+1)}z^{-\frac 32(\beta+1)}
u^{-\frac 12} \exp\left(\sqrt{-\frac c{\mu\beta}}\sqrt{u}(X_0^{-\beta}-z^{-\beta})\right)
\exp\left(\frac{c}2(X_0^{-2\beta}-z^{-2\beta})\right),
\end{align*}
as $u\to +\infty$ uniformly  in any bounded $z$-domain subset of $(0,X_0)$.
Thus,  as $c, \beta$ and $-\mu$ are positive constants we deduce  that $\int_{-\infty}^{-1} \left|\e^{(1+iu)t}\frac{\partial \Psi}{\partial z}(z,u)\right| du <\infty 
$ is uniformly convergent in any bounded $z$-domain  in $(0,X_0)$. We complete the proof by noticing that  $(u,z)\in\mathbb R\times K\mapsto \e^{(1+iu)t}\frac{\partial \Psi}{\partial z}(u,z)$ is a continuous function for any compact set  $K\subset(0,X_0)$ (see e.g. \cite[Theorem B.3]{PinZaf}).
\end{proof}
\subsection{Numerical tests}
For the CEV case, we  consider the pricing problem of the quantities introduced in \eqref{pricing:prob-cev}. 
More precisely, we approximate $\Pi^{\text{U-O}, X}_{\mathcal D}$ (resp. $\Pi^{\text{D-O}, X}_{\mathcal U}$)  by the improved MLMC algorithm  $\bar Q_{\mathcal D^{1-\alpha}}$ given in   \eqref{MLMC:D}  (resp.  $\bar P_{\mathcal U^{1-\alpha}}$ given in   \eqref{MLMC:U}), where we used our  interpolated drift implicit scheme
\begin{align*}
    \overline{Y}^n_t&=    \overline{Y}^n_{t_{i}}+ (1-\alpha)\left( \mu \overline{Y}^n_{t_{i+1}} - \alpha\frac{\sigma^2}{2\overline{Y}^n_{t_{i+1}}}\right)(t-t_{i})+\gamma(W_t-W_{t_{i}}),~~~~ \text{for}\, t \in [t_i,t_{i+1}[, 0\le i\le n-1,\\
Y_0&={X_0}^{1-\alpha},
\end{align*}
with $\gamma =  \sigma(1-\alpha).$ 
For $n$ large enough, the positive solution to the above implicit scheme is explicit and given by 
$$
\overline{Y}^n_{t_{i+1}}=\frac{\sqrt{2\sigma^2 \alpha(\alpha-1)(1+\mu(\alpha-1)\frac Tn)\frac Tn + (\gamma(W_{t_{i+1}}-W_{t_i})+\overline{Y}^n_{t_{i}})^2}+\gamma(W_{t_{i+1}}-W_{t_i})+\overline{Y}^n_{t_{i}} }{2+2\mu(\alpha-1)\frac Tn}.
$$
 For the CEV model, we dont have any benchmark price. To illustrate the MLMC complexity performance we choose a set of parameter \eqref{cond:mom-cev}, namely  $\alpha=1.2$, $X_0=100$, $\mu=0.1$, $\sigma=0.2$, $T=1$. The payoff function $g(x)=\e^{-rT}(x^{\frac{1}{1-\alpha}}-K)_+$ is a discounted call function with $r=0.1$. For the U-O option the strike is $K=90$, and the barrier ${\mathcal D}=150$. For the D-O option the strike is $K=100$ and the barrier ${\mathcal U}=90$.   The tables and the figures below confirm the high performance of the improved MLMC. 
 \begin{table}[h!]
\begin{center}
\begin{tabular}{ |c | c | c | c | c |}
\hline
 Accuracy & Price & MLMC cost & MC cost & Saving\\ 
 \hline
 \hline
$10^{-4}$ & $3.0390$ & $8.226\times10^9$ & $7.34\times 10^{13} $ & $8922.33$\\  
$5\times10^{-4}$ & $3.0391$ & $3.17\times10^8$ & $3.67\times 10^{11} $ & 1155.67 \\  
$10^{-3}$ & 3.041 & $7.436\times 10^7$ & $4.587\times 10^{10}$ & 616.91\\  
$10^{-2}$ & 3.0452  & $6.539\times 10^5$ & $5.734\times 10^7$ & 87.69\\
\hline
\end{tabular}
\caption{MLMC complexity tests for the U-O barrier option pricing of $\Pi^{\text{U-O}, X}_{\mathcal D}$}
\end{center}
\end{table}

\begin{table}[h!]
\begin{center}
\begin{tabular}{ |c | c | c | c | c |}
\hline
 Accuracy & Price & MLMC cost & MC cost & Saving\\ 
 \hline
 \hline
$5\times10^{-4}$ & 11.102 & 6.483$\times 10^9$ & 1.642$\times 10^{13} $ & 2532.83\\  
$10^{-3}$ & 11.103 & $1.608\times 10^{9}$ & $2.053\times 10^{12} $ & 1276.66 \\  
$5\times10^{-3}$ & 11.106 & 6.379$\times 10^7$ & 2.053$\times 10^{10} $ & 321.77\\  
$10^{-2}$ & 11.094  & $1.587\times 10^7$ & $2.566\times 10^9$ & 161.69\\
\hline
\end{tabular}
\caption{MLMC complexity tests for the D-O barrier option pricing of $\Pi^{\text{D-O}, X}_{\mathcal U}$}
\end{center}
\end{table}
\begin{figure}[h!]
\begin{subfigure}{.45\textwidth}
  \centering
  \includegraphics[width=.9\linewidth]{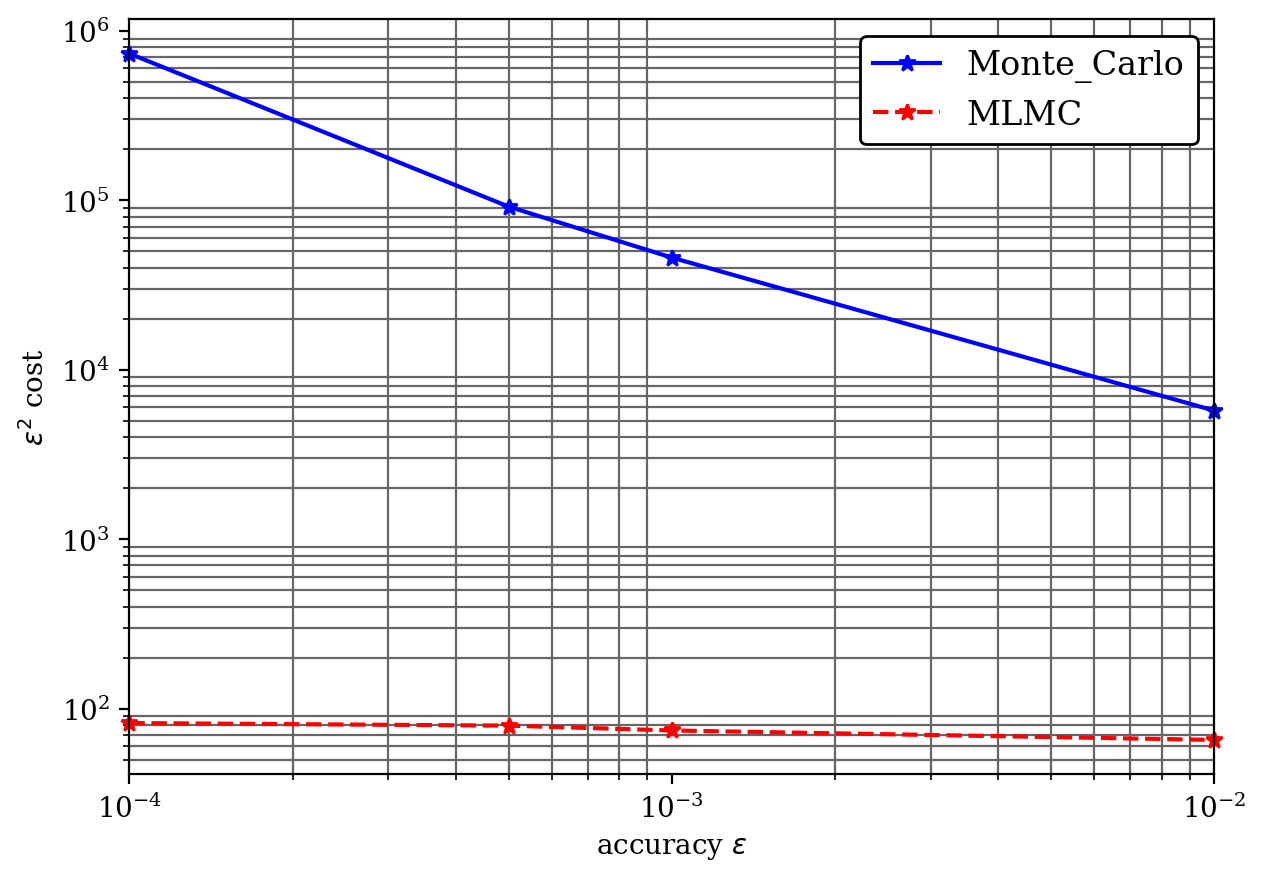}  
  \caption{Approximation of $\Pi^{\text{U-O}, X}_{\mathcal D}$}
  \label{fig:sub-first}
\end{subfigure}
\begin{subfigure}{.45\textwidth}
  \centering
  \includegraphics[width=.9\linewidth]{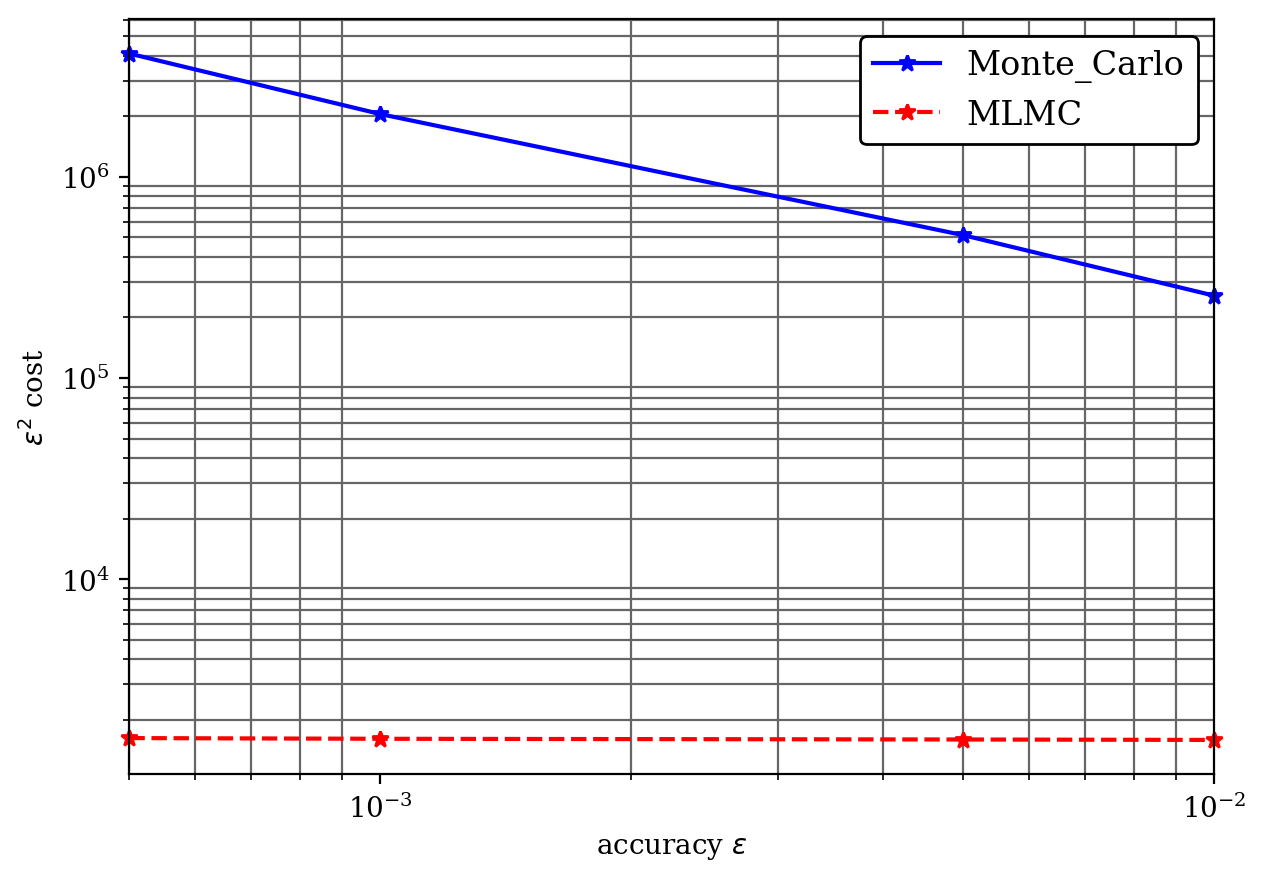}  
  \caption{Approximation of  $\Pi^{\text{D-O}, X}_{\mathcal U}$}
  \label{fig:sub-second}
\end{subfigure}
\caption{ Comparison for the performances of MLMC vs classical MC algorithm under the CEV model}
\label{fig:fig}
\end{figure}
For pricing barrier options under the popular CEV model, the  numerical results  confirm the supremacy of the improved MLMC algorithm  that reaches the optimal time complexity $O(\varepsilon^{-2})$  for a given precision $\varepsilon>0$.   
\section{Conclusion}
In this paper, we proved that the MLMC method for pricing barrier options reaches its optimal time-complexity regime,   when the underlying asset  has non-Lipschitz coefficients.~To apply our  theoretical results for the popular  CIR and CEV processes, we developed semi-explicit formulas for the densities of the running minimum and running maximum of these processes that are of independent interest.  It turns out that under some constraints on the parameters of these models guaranteeing the existence of finite negative moments up to some order, the MLMC method  behaves like an unbiased classic Monte Carlo estimator despite the use of approximation schemes.   It may be interesting to~extend this study by combining this improved version of the MLMC method with importance sampling technics for variance reduction as proposed in \cite{BenHajKeb1,KebLel, BenHajKeb3}, we leave this for a  possible future work.

\section{Acknowledgement}
We are grateful to Professor Christian Bayer for his suggestion to shorten the proofs of theorems 
\ref{thm:CEV-max} and \ref{thm:CEV-min} for the case $\mu>0$.


\end{document}